\documentclass[11pt,reqno]{amsart}
\usepackage{amsmath}
\usepackage{amssymb}
\usepackage{microtype}
\usepackage[hidelinks]{hyperref}

\allowdisplaybreaks
\newtheorem{theorem}{Theorem}[section]
\newtheorem{proposition}[theorem]{Proposition}
\newtheorem{lemma}[theorem]{Lemma}
\newtheorem{corollary}[theorem]{Corollary}
\theoremstyle{definition}
\newtheorem{remark}[theorem]{Remark}
\numberwithin{equation}{section}
\newcommand{\R}{\mathbb R}
\newcommand{\N}{\mathbb N}
\newcommand{\dimH}{\dim_{\mathrm H}}
\newcommand{\dimC}{\dim_{\mathrm C}}
\newcommand{\spt}{\operatorname{spt}}
\newcommand{\id}{\operatorname{id}}
\newcommand{\diam}{\operatorname{diam}}
\newcommand{\esssup}{\operatorname*{ess\,sup}}
\newcommand{\E}{\mathbb E}

\begin{document}
\title[Conformal Dimension of Measures]{Conformal Dimension of Measures and Quasisymmetric Dimension Reduction}

\author{Hua Qiu}
\address{School of Mathematics, Nanjing University, Nanjing, 210093, P. R. China.}
\thanks{The research of Qiu was supported by the National Natural Science Foundation of China, grants 12471087 and 12531004. The research of Wang was supported by the Nanjing University PhD Student Zhujian Program, grant ZJJH2026B08.}
\email{huaqiu@nju.edu.cn}

\author{Qi Wang}
\address{School of Mathematics, Nanjing University, Nanjing, 210093, P. R. China.}
\email{602023210013@smail.nju.edu.cn}

\subjclass[2020]{Primary 30L10; Secondary 28A80}
\date{}
\keywords{conformal dimension, quasisymmetric map, hyperbolic filling, locally finite measure}

\begin{abstract}
We prove that the conformal dimension of every locally finite Borel measure
is either zero or infinite. The main ingredient is a quasisymmetric
dimension-reduction theorem: every full-support probability measure of
finite Hausdorff dimension on a separable metric space admits
quasisymmetrically equivalent metrics in which its Hausdorff dimension is
arbitrarily small. In particular, every locally finite Borel measure on a
doubling metric space has conformal dimension zero. We also prove that for
every $n\ge1$ and $p>0$ there are a metric space $X$, a quasisymmetric
homeomorphism $f:[0,1]^n\to X$, and a Borel set $E\subset[0,1]^n$ such that
$\dimH f(E)\le p$ and $\dimH([0,1]^n\setminus E)\le n-1+p$. The second bound
is sharp up to $p$: if $\dimH f(E)<1$, then
$\dimH([0,1]^n\setminus E)\ge n-1$.
\end{abstract}
\maketitle
\enlargethispage{3pt}

\section{Introduction}

\subsection{Background and main results}

We begin with notation. Let $(X,d)$ be a metric space. For $x\in X$ and
$r>0$, let $B_d(x,r)=\{y\in X:d(x,y)<r\}$ be the open ball centered at
$x$ with radius $r$. For $E\subset X$, let
$\diam_d E=\sup\{d(x,y):x,y\in E\}$ be its diameter. For $s\ge0$, the
\emph{$s$-dimensional Hausdorff measure} is
\[
 \mathcal H_d^s(E)
 =\lim_{\delta\downarrow0}
 \inf\left\{\sum_i(\diam_d U_i)^s:
 E\subset\bigcup_iU_i,\ \diam_d U_i\le\delta\right\},
\]
where the infimum is over countable covers.  The \emph{Hausdorff dimension}
of $E$ is
\[
 \dimH^d E=\inf\{s\ge0:\mathcal H_d^s(E)=0\}.
\]
We omit the metric subscript or superscript when it is clear.

Let $(Y,d_Y)$ be another metric space.  A homeomorphism $f:X\to Y$
is called \emph{quasisymmetric} if there is a homeomorphism
$\eta:[0,\infty)\to[0,\infty)$ such that
\[
 \frac{d_Y(f(x),f(a))}{d_Y(f(x),f(b))}
 \le \eta\!\left(\frac{d(x,a)}{d(x,b)}\right)
\]
for all distinct $x,a,b\in X$. A metric $D$ on $X$ is said to be
\emph{quasisymmetrically equivalent} to $d$ if the identity map
$\operatorname{id}_X:(X,d)\to(X,D)$ is quasisymmetric. The two concepts
are equivalent: quasisymmetric maps may be viewed as quasisymmetric changes
of metric on their domains. Tukia and V\"ais\"al\"a \cite{TV80} formulated quasisymmetry for
maps between general metric spaces, building on a one-dimensional form
implicit in Beurling and Ahlfors's study of boundary correspondences for
planar quasiconformal maps \cite{BA56}.

A fundamental quasisymmetric invariant is the \emph{conformal dimension} of a
metric space, introduced by Pansu \cite{P89} and defined by
\[
 \dimC X=\inf_f\dimH f(X),
\]
where the infimum is over all quasisymmetric homeomorphisms from $X$ onto
metric spaces.  This invariant measures how far the Hausdorff dimension of
the entire space can be reduced without changing its quasisymmetric geometry.

The space $X$ is called \emph{minimal for conformal dimension} if
$\dimC X=\dimH X$.
A classical theorem of Bishop and Tyson \cite{BT01} states that
$[0,1]\times E$ is minimal whenever $E\subset\R^{n-1}$ is compact. Questions
concerning the conformal dimension and conformal minimality of sets have been
studied in many settings; see, for example,
\cite{Kov06,Hak10,Mac11,CP14,Orp17,BHL25,MT26,MTTree26}.

The conformal dimension of a space concerns the geometry of the whole set and
does not distinguish between different measures on the same support. To study
the geometry seen by a measure rather than that of its entire support, Bate
and Orponen \cite{BO18} introduced the conformal dimension of a measure.

A Borel measure $\mu$ on $X$ is \emph{locally finite} if every point of
$X$ has an open neighborhood of finite $\mu$-measure.
For a Borel measure $\mu$ on $(X,d)$, its \emph{Hausdorff dimension} is
\[
 \dimH^d\mu
 =\inf\{\dimH^d E:E\subset X\text{ is Borel and }\mu(X\setminus E)=0\}.
\]
The \emph{support} of $\mu$, denoted by $S=\spt\mu$, is the set of points every
open neighborhood of which has positive $\mu$-measure. We say that $\mu$ has
\emph{full support} if $S=X$.

If $f:S\to Y$ is Borel, the \emph{push-forward measure}
$f_\#\mu$ on $Y$ is defined by $f_\#\mu(A)=\mu(f^{-1}(A))$.  The
\emph{conformal dimension of $\mu$} is
\begin{equation}\label{eq:conformal-dimension}
 \dimC\mu=\inf_f\dimH(f_\#\mu),
\end{equation}
where $f$ ranges over all quasisymmetric homeomorphisms from $(S,d)$ onto
metric spaces.

Throughout the paper, every measure is assumed to be nonzero, locally finite and to satisfy
$\mu(X\setminus S)=0$.
The latter assumption is needed because the quasisymmetric maps in the
definition act only on $S$. If $\mu(X\setminus S)>0$, then $f_\#\mu$ is
precisely the push-forward of the restricted measure $\mu|_S$; in particular,
$f_\#\mu(Y)=\mu(S)$. Thus the positive mass outside $S$ has no image, and the
resulting conformal dimension describes $\mu|_S$ rather than the original
measure $\mu$. Note that when $X$ is separable, every Borel measure satisfies
$\mu(X\setminus S)=0$.
Indeed, $X\setminus S$ is covered by open $\mu$-null sets, and a countable base
reduces this cover to a countable subcover. The same conclusion holds when
$X$ is doubling, since every doubling metric space is separable.

Bate and Orponen showed that the direct measure-theoretic analogue of the
Bishop--Tyson phenomenon can fail. For every $n\ge2$ and $0<s<n-1$, they
constructed a non-Ahlfors-regular compact set $E\subset\mathbb R^{n-1}$ such
that the restriction of $\mathcal H^{1+s}$ to $[0,1]\times E$ has conformal
dimension zero \cite{BO18}.
Thus the conformal geometry of a support and the conformal
geometry of the mass it carries can behave very differently.  This led them
to pose the following two questions \cite[Questions~1 and~2]{BO18}.

\vspace{0.2cm}
\noindent\textit{\textbf{Question $1$.} Do there exist measures with positive
and finite conformal dimension?}
\vspace{0.2cm}

\noindent\textit{\textbf{Question $2$.} Let $C\subset\mathbb R$ be the
middle-thirds Cantor set of dimension $s=\log 2/\log 3$, and let $\mu$ be the
$(1+s)$-dimensional Hausdorff measure on $[0,1]\times C$.  Does $\mu$ have
conformal dimension $1+s$?}
\vspace{0.2cm}

Our first theorem gives a complete answer to Question~1.

\begin{theorem}\label{thm:main}
Let $(X,d)$ be a metric space and let $\mu$ be a locally finite Borel measure
on $X$. Then $\dimC\mu\in\{0,+\infty\}$.
\end{theorem}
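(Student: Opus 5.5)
Theorem~\ref{thm:main} will follow from the quasisymmetric dimension-reduction theorem announced in the abstract: \emph{every full-support probability measure of finite Hausdorff dimension on a separable metric space admits quasisymmetrically equivalent metrics in which its Hausdorff dimension is arbitrarily small.} The plan is to reduce to that statement and then prove it with a hyperbolic-filling construction. Suppose $\dimC\mu<\infty$. Then there is a quasisymmetric $f:(S,d)\to (Y,d_Y)$ with $\dimH f_\#\mu<\infty$. Since conformal dimension is defined through quasisymmetric changes of metric and compositions of quasisymmetric maps are quasisymmetric, we may replace $(S,d)$ by $f(S)$. So we may assume $\dimH\mu=s<\infty$, with $\mu$ of full support on $S$, and we must show $\dimC\mu=0$.

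\textbf{Reduction to a separable probability space.} Choose a Borel set $E\subset S$ with $\mu(S\setminus E)=0$ and $\dimH E<\infty$. I expect to find a $\sigma$-compact, hence separable, full-measure set inside $E$, and to arrange $\mu(E)>0$ carries everything. A subtlety is that $S$ itself need not be separable while we must change the metric on all of $S$. I would first try to show that a locally finite measure with full support and $\mu(X\setminus S)=0$ forces local separability of $S$. Failing that, I would extend the new metric from a separable full-measure part to the rest by leaving distances essentially unchanged there. Local finiteness and a partition into countably many pieces of finite mass, weighted by $2^{-i}$, then let us normalize to a probability measure $\nu$ equivalent to $\mu$. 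This normalization does not change Hausdorff dimension.

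\textbf{Dimension reduction.} Given $p>0$, build a hyperbolic filling of $(S,d)$: nested maximal $2^{-k}$-separated nets with vertices $(x,k)$. Assign to each vertex a weight $\rho(x,k)\le 1$ by a rule that depends only on local combinatorics, such as
\[
\rho(v)=\min\{1,\ \lambda\,\nu(B_v)^{\alpha}/\operatorname{scale}(v)\}.
\]
Then define $D$ as the induced "visual" or conformal-gauge metric $D(x,y)\approx\inf\prod\rho$ along descending chains. Two properties are needed:
\begin{enumerate}
\item $D$ is quasisymmetric to $d$. Weights on adjacent vertices should be comparable (Harnack-type), which is the standard criterion for quasisymmetry of a weighted filling metric. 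I would enforce this by smoothing $\rho$ with a maximum over neighbors and using the fact that the weights only shrink.
\item Balls of $\nu$-typical points shrink much faster than their mass, so that $D$-diameters satisfy $\diam_D B\lesssim\nu(B)^{1/p}$ on a full-measure set. A mass-distribution (Frostman/Billingsley) argument then gives $\dimH^D\nu\le p$.
\end{enumerate}

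\textbf{Main obstacle.} The hardest step is reconciling these two requirements without a doubling hypothesis. The finite-dimension assumption should be what bounds the number of children carrying significant mass at typical points. I would use the local dimension bound $\liminf\log\nu(B(x,r))/\log r\le s$ $\nu$-a.e., and apply the shrinking only on scales where mass drops geometrically. Borel--Cantelli-type counting would control the exceptional vertices. The rest of the argument, namely composing the maps and applying the definition \eqref{eq:conformal-dimension}, is routine.
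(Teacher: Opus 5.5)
\textbf{Separability is left unresolved.} Your outer reduction has the same shape as the paper's: assume $\dimC\mu<\infty$, pass to $f(S)$, normalize to an equivalent probability measure $\nu$, and apply Theorem~\ref{thm:reduction}. But you leave separability open, and that step is not optional. Without it you get neither the countable decomposition into finite-mass pieces (counting measure on an uncountable discrete space is locally finite but not $\sigma$-finite) nor the hypotheses of the reduction theorem.

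The subtlety you flag disappears once $\dimH f_\#\mu<\infty$. A full-measure set $A\subset Y=f(S)$ with $\dimH A<\infty$ is separable: for each $\delta>0$ it has a countable cover by sets of diameter at most $\delta$, and one point from each gives a countable $\delta$-dense subset. Because $f_\#\mu$ has full support, every full-measure set meets every nonempty open set, so $Y=\overline A$ is separable. This is how the paper handles nonseparable supports, where it concludes $\dimC\mu=+\infty$. Neither of your fallbacks is needed, and neither would work as stated:
\begin{itemize}
\item local separability does not imply separability;
\item leaving distances unchanged off a separable piece gives no control of the three-point ratios in the quasisymmetry condition;
\item a $\sigma$-compact full-measure subset need not exist when $\mu$ is not tight.
\end{itemize}

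\textbf{The sketched reduction theorem would fail.} A weight rule that depends only on $\nu$-masses of balls and on the scale cannot work. For $\nu=\mathcal L^n$ on $[0,1]^n$, your $\rho$, and its neighbor-maximum smoothing, is constant on each level away from $\partial[0,1]^n$. So up to constants the gauge distance between nearby points depends only on $d$: $D\asymp h\circ d$ locally for one increasing $h$. Splitting a segment of length $t$ into $N$ pieces gives $h(t)\lesssim N h(t/N)$. Hence there are only two outcomes:
\begin{itemize}
\item if $h(t)/t\to0$, the chain metric collapses (with dyadic nets, $\rho\equiv\eta<1/2$ on $[0,1]$ gives $D\equiv0$, although neighboring weights are perfectly comparable);
\item if $h(t)\gtrsim t$, then $d\lesssim D$ locally and $\dimH^D\nu\ge n$.
\end{itemize}
So the obstacle is not the lack of doubling; it already occurs for Lebesgue measure on $[0,1]$. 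Harnack comparability is also not a sufficient criterion for quasisymmetry. You need a lower bound forcing every chain that crosses an annulus to pay roughly the weight of the separating ancestor. In the paper this is the admissibility condition \eqref{eq:admissible}, which Lemmas~\ref{lem:annular} and~\ref{lem:discrete-chain} turn into $D_\sigma\asymp Q$ in \eqref{eq:DQ}.

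\textbf{The missing idea is symmetry breaking.} Keep the factor $1-\eta$ only near a sparse family of barriers that every crossing chain must hit, use $\eta$ elsewhere, and arrange that $\nu$-typical points are near barriers at only a small proportion of scales. The paper does this as follows.
\begin{itemize}
\item It takes independent random partitions $\mathcal P_n$ from Lemma~\ref{lem:partition} at scale $\Delta_n=\alpha^{n-1}/4$.
\item It sets $\sigma(z,n)=1$ exactly when $B(z,9\alpha^n)$ is cut, as in \eqref{eq:sigma}. A chain crossing the annulus cannot stay in one cell, which gives admissibility.
\item The cut probability at $x$ is bounded by \eqref{eq:pn}, a logarithmic mass ratio that telescopes over $n$.
\end{itemize}
This is where $\dimH\mu<\infty$ enters: through lower local dimension (Lemma~\ref{lem:localdim}), not through counting massive children. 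Along a sequence of scales the bad proportion is at most $\delta_M=32K(s_\mu+\varepsilon)\log M/M$. The strong law plus Fubini then fixes one realization for which this holds $\mu$-almost everywhere. ``Shrinking only where mass drops geometrically'' cannot replace this, since for Lebesgue measure mass drops geometrically at every point and every scale.
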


Theorem~\ref{thm:main} therefore implies that every locally finite Borel
measure on a doubling metric space has conformal dimension zero, so
Question~2 has a negative answer.
A metric space $(X,d)$ is \emph{doubling} if every ball can be covered by a
uniformly bounded number of balls of half the radius.

\begin{corollary}\label{cor:doubling}
Let $(X,d)$ be a doubling metric space and let $\mu$ be a locally finite Borel
measure on $X$. Then $\dimC\mu=0$.
\end{corollary}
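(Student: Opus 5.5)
Corollary~\ref{cor:doubling} will follow from Theorem~\ref{thm:main} once we rule out the value $+\infty$. The plan is to exhibit, for a doubling space, a single admissible map $f$ in \eqref{eq:conformal-dimension} with $\dimH(f_\#\mu)<\infty$. The natural choice is $f=\id_S:(S,d)\to(S,d)$, which is trivially quasisymmetric with $\eta(t)=t$. With this choice,
\[
 \dimC\mu\le\dimH^d\mu\le\dimH^d S.
\]
The last inequality holds because $S$ is Borel (it is closed) and $\mu(X\setminus S)=0$, so $S$ is one of the sets admitted in the definition of $\dimH^d\mu$. We also recall that the standing hypothesis $\mu(X\setminus S)=0$ is automatic here, since doubling spaces are separable, as noted above.

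So the whole proof reduces to the standard fact that a doubling metric space has finite Hausdorff dimension, applied to $S$, which inherits the doubling property from $X$ with at most a squared doubling constant.

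\textbf{The doubling fact.} Let $N$ be the doubling constant of $S$. Iterating the doubling property, any ball of radius $R$ is covered by $N^k$ balls of radius $2^{-k}R$. Taking $s>\log_2 N$, we estimate the pre-measure $\mathcal H^s_{\delta}$ of a ball $B(x,R)$ by
\[
 N^k(2\cdot 2^{-k}R)^s\to0 \quad\text{as } k\to\infty.
\]
Hence $\mathcal H^s(B(x,R))=0$. Since $S$ is separable, it is a countable union of such balls, so $\mathcal H^s(S)=0$ and $\dimH S\le\log_2 N<\infty$. Therefore $\dimC\mu<\infty$, and Theorem~\ref{thm:main} forces $\dimC\mu=0$.

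The only step requiring care is heredity of doubling to the subset $S$. A ball of $S$ is the trace of a ball in $X$. Covering it by $N^2$ balls of $X$ of a quarter of the radius, and recentering each one that meets $S$ at a point of $S$, gives balls of $S$ of half the radius. I expect no real obstacle; all the difficulty lies in Theorem~\ref{thm:main} itself.
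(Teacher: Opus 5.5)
Your proposal is correct and follows the paper's argument. Both proofs bound $\dimC\mu\le\dimH^d\mu\le\dimH^d S<\infty$ via the identity map and then use Theorem~\ref{thm:main} to exclude the value $+\infty$. The only difference is how finiteness of the Hausdorff dimension is obtained: the paper gets $\dimH^d X<\infty$ from Assouad's snowflake embedding into $\R^m$, while your direct covering count is more elementary and gives the explicit bound $\log_2 N$. Your heredity step could also be skipped, since $\dimH^d S\le\dimH^d X$ by monotonicity.
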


\begin{proof}
By Assouad's embedding theorem \cite[Theorem~12.2]{H01}, for some
$0<\theta<1$ the snowflake $(X,d^\theta)$ admits a
bi-Lipschitz embedding into $\mathbb R^m$ for some $m\ge1$.
Thus there are an injective map $F:X\to\mathbb R^m$ and a constant
$C\ge1$ such that
$C^{-1}d(x,y)^\theta\le |F(x)-F(y)|\le C d(x,y)^\theta$ for all $x,y\in X$.
Hence
$\dimH^dX=\theta\dimH^{d^\theta}X\le\theta m<\infty$. Writing
$S=\spt\mu$, we obtain
$\dimC\mu\le\dimH^d\mu\le\dimH^dS\le\dimH^dX<\infty$, so
Theorem~\ref{thm:main} gives $\dimC\mu=0$.
\end{proof}

Corollary~\ref{cor:doubling} also has a short direct proof, independent of
Theorem~\ref{thm:main}. We collect it in Section~\ref{sec:doubling}.

Corollary~\ref{cor:doubling} naturally raises the question of what remains
true when the ambient space is not doubling. The following two examples show
that, without doubling, both values in Theorem~\ref{thm:main} can occur.

\medskip
\noindent\textit{Example 1: $\dimC\mu=\infty$.}
Let $X$ be an uncountable set with the discrete metric
\[
 d(x,y)=
 \begin{cases}
  0,&x=y,\\
  1,&x\ne y,
\end{cases}
\]
and let $\mu$ be the counting measure on $X$. Then $\mu$ is locally finite,
$\spt\mu=X$, and $X$ is neither separable nor doubling.  If
$h:X\to h(X)$ is a quasisymmetric homeomorphism, then $h_\#\mu$ is the counting
measure on $h(X)$, so its only full-measure Borel set is $h(X)$ itself. If
$h(X)$ had finite Hausdorff dimension, it would be separable, contradicting
the fact that $h$ is a homeomorphism and $X$ is nonseparable. Therefore
$\dimH(h_\#\mu)=\infty$ for every $h$, and $\dimC\mu=\infty$.

\medskip
\noindent\textit{Example 2: $\dimC\mu=0$.}
Let
\[
 X=\prod_{k=1}^{\infty}\{0,\ldots,k\},\qquad
 r_0=1,\quad r_m=\frac{1}{(m+1)!},
\]
and equip $X$ with the ultrametric $d(x,x)=0$ and $d(x,y)=r_m$ for
$x\ne y$, where $m$ is the largest integer such that
$x_1=y_1,\ldots,x_m=y_m$. Let $\mu$ be the product probability measure for
which the $k$-th coordinate is uniformly distributed on
$\{0,\ldots,k\}$. The metric $d$ induces the product topology, so $X$ is
compact, and $\mu$ has full support.

The space $X$ is not doubling. Indeed, every cylinder determined by the first
$m-1$ coordinates contains $m+1$ disjoint cylinders determined by the first
$m$ coordinates, so the number of half-radius balls required to cover a ball
is not uniformly bounded. Each cylinder of length $m$ has diameter $r_m$ and
$\mu$-measure $r_m$. For every $p>0$, the function $d^{1/p}$ is again an
ultrametric that is quasisymmetrically equivalent to $d$. With respect to
$d^{1/p}$, a cylinder of length $m$ has
diameter $r_m^{1/p}$ and measure $r_m$. This gives
$\dimH^{d^{1/p}}(\id_\#\mu)\le p$.
Letting $p\downarrow0$ gives $\dimC\mu=0$.

\medskip
The proof of Theorem~\ref{thm:main} is based on the following
dimension-reduction theorem.

\begin{theorem}\label{thm:reduction}
Let $(X,d)$ be a separable metric space and let $\mu$ be a Borel probability
measure with full support on $X$. If $\dimH^d\mu<\infty$, then for every
$p>0$ there is a metric $D$ on $X$, quasisymmetrically equivalent to $d$,
such that $\dimH^D\mu\le p$. Consequently, $\dimC\mu=0$.
\end{theorem}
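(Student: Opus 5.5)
We plan to construct $D$ by deforming the metric along a hyperbolic-filling-type family of balls: near a full-measure set the new metric decreases very fast, so each ball with small $\mu$-mass gets tiny $D$-diameter, and away from that set the metric is left essentially unchanged. Quasisymmetry is preserved because the deformation is carried out scale by scale with uniformly bounded distortion at each scale.

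\textbf{Step 1: a full-measure set of controlled density.} Put $s>\dimH^d\mu$. Then there is a Borel set $E$ with $\mu(X\setminus E)=0$ and $\dimH^d E<s$. Using separability and the Hausdorff-measure definition, we choose for each $k$ a countable cover of $E$ by balls $B_{k,i}=B_d(x_{k,i},\rho_{k,i})$ with $\sum_i\rho_{k,i}^s\le 2^{-k}$. We arrange these covers so that they are nested and well separated in scale, in the spirit of a hyperbolic filling: all radii at stage $k$ lie below a threshold $\delta_k$, and $\delta_{k+1}\ll\min$ of the relevant stage-$k$ radii. This requires passing to sub-balls and taking the thresholds sparse enough. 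We also use the fact that $\mu$ is a probability measure to truncate to finitely many balls up to small mass where needed.

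\textbf{Step 2: defining $D$.} Fix $p>0$ and a large integer $N$. Define a weight $\rho$ that is multiplied by a factor $\lambda<1$ each time a point is inside a stage-$k$ ball, for $k$ ranging over the sparse subsequence of stages. Concretely, take $D$ to be the largest metric below the chain-length
\[
 \inf\sum_j \min\bigl\{d(z_j,z_{j+1}),\ \lambda^{m(z_j,z_{j+1})}\diam_d(\text{smallest common ball})\bigr\},
\]
where $m$ counts the nested stage balls containing both points. This is the usual ``conformal gauge built on a tree'' construction; an alternative is to embed $X$ into the boundary of a hyperbolic filling and rescale the edge lengths by $\lambda$ on the marked vertices. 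The key property we need is that the stage-$k$ ball has $D$-diameter about $\lambda^{k}\rho_{k,i}$, while $D$ stays comparable to $d$ at scales where no stage ball is active.

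\textbf{Step 3: quasisymmetry.} We will show that $\id:(X,d)\to(X,D)$ is weakly quasisymmetric, and then quasisymmetric. Take a triple $x,a,b$. Because the stages are separated in scale by enormous ratios, the two distances $d(x,a)$ and $d(x,b)$ see the same number of multiplicative factors, up to a bounded error, unless their ratio is itself enormous. This gives a control function $\eta(t)\lesssim\max\{t^{\alpha},t^{1/\alpha}\}$-type. I expect this step to be the main obstacle, because the balls are not assumed to lie in a doubling space and overlaps of balls at one stage have to be handled. The first attempt will be to replace the balls by a disjointified ultrametric-like family of clusters obtained from a maximal separated net.

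\textbf{Step 4: the dimension count.} Every point of $E'=\limsup$ of the covers, which has full measure by Borel--Cantelli-type choices, lies in infinitely many stage balls. Then
\[
 \sum_i(\diam_D B_{k,i})^{p}\lesssim\sum_i\lambda^{kp}\rho_{k,i}^{p}.
\]
By taking $\lambda$ so small that $\lambda^{kp}$ compensates the loss from $\rho^p$ versus $\rho^s$ (we use the sparsity of thresholds so that $\rho_{k,i}^{p}\le\lambda^{-kp/2}\rho_{k,i}^s$), the sum tends to $0$. Hence $\mathcal H^p_D(E')=0$ and $\dimH^D\mu\le p$. Since $p$ is arbitrary, $\dimC\mu=0$.
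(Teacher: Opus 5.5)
Your Step~4 does not work as stated, and the failure points to the missing idea. The inequality $\rho_{k,i}^{p}\le\lambda^{-kp/2}\rho_{k,i}^{s}$ is equivalent to $\rho_{k,i}\ge\lambda^{kp/(2(s-p))}$. That is a \emph{lower} bound on the radii, and Step~1 never provides one. Radii in a countable cover accumulate at $0$. In any case, your requirement that stages be separated by enormous scale ratios makes $\delta_k$ decay faster than any fixed geometric rate, so the inequality fails for every ball at large stages.

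This is structural, not bookkeeping. To turn $\sum_i\rho_i^s$ small into $\sum_i(\diam_D B_i)^p$ small you need $\diam_D B\lesssim\rho^{s/p}$, i.e.\ a compression factor that is a fixed power of $\rho$. So a typical ball must be compressed by a definite factor at a positive proportion of \emph{all} geometric scales between $1$ and $\rho$. One factor $\lambda$ per sparse stage gives far too little. Letting $\lambda=\lambda_k\to0$ instead lets a single stage change relative distances by an unbounded factor, which no fixed control function permits: take a point near the boundary of a stage ball and compare a point inside with one outside at the same $d$-distance. The paper compresses by $\eta$ at every scale $n$ except those at which $x$ lies within a bounded multiple of $\alpha^n$ of a ``wall''. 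It then arranges that $\mu$-a.e.\ $x$ is near a wall at no more than $(\delta_M+o(1))n$ of the first $n$ scales along a subsequence.

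Second, Step~3 is where the real content lies, and you have no mechanism for it. Since $E$ has full measure and $\mu$ has full support, each stage cover has an open dense (typically connected) union. An infimum over chains of your step costs can therefore travel cheaply through compressed balls and collapse, and nothing keeps the local scale factor from jumping across ball boundaries.

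The paper's key device is admissibility \eqref{eq:admissible}: every level-$n$ chain crossing the annulus between radii $\alpha^{n-1}$ and $2\alpha^{n-1}$ about a level-$(n-1)$ vertex must meet a vertex with $\sigma=1$. Via Lemmas~\ref{lem:annular} and~\ref{lem:discrete-chain} this gives the non-collapse estimate $D_\sigma\asymp Q$. The smoothing \eqref{eq:mweight}--\eqref{eq:pi-recursion} makes the weights of horizontal neighbours comparable. Together these give quasisymmetry with control $C(t^\beta+t^\gamma)$.

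The walls are boundaries of random Borel partitions whose cells have diameter at most $\alpha^{n-1}/4$. Lemma~\ref{lem:partition} bounds the probability that $x$ is within $K\alpha^n$ of a wall by $(32K/M)\log(\mu(B(x,\Delta_n))/\mu(B(x,\Delta_n/8)))$, and these bounds telescope. At scales where $\mu(B(x,r))\ge r^{s+\varepsilon}$, which occur for $\mu$-a.e.\ $x$ by Lemma~\ref{lem:localdim}, the expected number of wall scales is at most $(\delta_M+o(1))n$. The strong law and Fubini then fix the partitions. This is how the measure governs where the metric may be compressed. An efficient cover of $E$ as in your Step~1 does not tell you where separating walls can be placed, so it is the wrong input for this construction.
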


The proof of Theorem~\ref{thm:reduction} adapts the weighted hyperbolic-filling
construction of Miller and Tian \cite{MT26,MTTree26} and combines it with a
probabilistic choice of weights. In a hyperbolic filling, vertices represent points of $X$ at
successively finer scales, horizontal edges join nearby points at the same
scale, and vertical edges connect consecutive scales. We choose the weights
so that small neighborhoods are inexpensive at most scales while every chain
crossing the filling retains a uniform positive cost. The latter bound
prevents the metric from collapsing and, together with control of ball
diameters, yields quasisymmetry. Our setting introduces two additional
features: $X$ need not be compact, so the filling may have countably infinite
vertex sets and need not be locally finite; and the metric is defined directly
on $X$ from the costs of finite chains, rather than on the boundary of a
reweighted graph. Our later application uses the diameter estimate only at
$\mu$-almost every point, with the weights chosen by random Borel partitions
adapted to $\mu$. These partitions ensure that almost every point lies near a
partition boundary at only a controlled proportion of scales, which gives
$\dimH^D\mu\le p$.

\medskip

Write $\mathcal L^n$ for $n$-dimensional Lebesgue measure. Note that
Corollary~\ref{cor:doubling} immediately implies that $\mathcal L^n$ has
conformal dimension zero for every $n\ge1$. This is closely related to earlier
constructions of singular quasisymmetric maps. Research in this direction
began with Tukia's one-dimensional result \cite{T89}: for every $p>0$ there
are a quasisymmetric homeomorphism $f:[0,1]\to[0,1]$ and a set
$E\subset[0,1]$ such that
\begin{equation}\label{eq:tukia}
 \dimH f(E)\le p,
 \qquad
 \dimH([0,1]\setminus E)\le p.
\end{equation}
Romney \cite{R19} later proved that for every $n\ge2$ and $p>0$ there
are a Borel set $E\subset[0,1]^n$, a metric space $X$, and a quasisymmetric
homeomorphism $f:[0,1]^n\to X$ such that
\begin{equation}\label{eq:romney}
 \dimH f(E)\le p,
 \qquad
 \mathcal L^n([0,1]^n\setminus E)=0.
\end{equation}
Romney left open the following question \cite[Section~1]{R19}.

\vspace{0.2cm}
\noindent\textit{\textbf{Question $3$.} To what extent can the Hausdorff
dimension of $[0,1]^n\setminus E$ be reduced?}
\vspace{0.2cm}

We can give a sharp quantitative answer by controlling simultaneously the
dimension of the image of a set and the dimension of its complement.

\begin{theorem}\label{thm:singular}
Let $n\geq1$. For every $p>0$, there exist a metric space $X$, a
quasisymmetric homeomorphism $f:[0,1]^n\to X$, and a Borel set
$E\subset[0,1]^n$ such that
\[
 \dimH f(E)\le p
 \quad\text{and}\quad
 \dimH([0,1]^n\setminus E)\le n-1+p.
\]
\end{theorem}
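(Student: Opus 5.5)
Apply Theorem~\ref{thm:reduction} not to $\mathcal L^n$ but to a probability measure $\nu$ on $[0,1]^n$ with full support, chosen so that any set of full $\nu$-measure has complement of dimension at most $n-1+p$. Once we have such a $\nu$ with $\dimH\nu<\infty$, Theorem~\ref{thm:reduction} gives a metric $D$ on $[0,1]^n$, quasisymmetrically equivalent to the Euclidean metric, with $\dimH^D\nu\le p'$ for any prescribed $p'>0$. Let $X=([0,1]^n,D)$ and $f=\id$. Then there is a Borel set $E$ with $\nu([0,1]^n\setminus E)=0$ and $\dimH^D E\le p'$, which gives the first inequality. The second inequality is then the key property of $\nu$: every $\nu$-null set should have dimension at most $n-1+p$. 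That is too strong literally, since points are $\nu$-null. So instead choose $E$ inside a prescribed set $G$ of full measure whose complement is small. Concretely, intersect the set from Theorem~\ref{thm:reduction} with $G$, where $\nu(G)=1$ and $\dimH([0,1]^n\setminus G)\le n-1+p$.

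For the one-dimensional case, the model is Tukia's \eqref{eq:tukia}. We want a full-support measure $\nu_1$ on $[0,1]$ concentrated on a Borel set $G_1$ with $\dimH([0,1]\setminus G_1)\le p$. Take $\nu_1=\sum_j 2^{-j}\nu^{(j)}$, where each $\nu^{(j)}$ is a measure concentrated on a set whose complement has small dimension. The natural choice is a measure of "Liouville/Besicovitch–Eggleston" type: with respect to a fast-growing base $b$, the digit frequencies are extremely biased, so $\nu_1$ lives on the set $G_1$ of points whose digit statistics are typical for $\nu_1$. Its complement, the set of atypical points, must have small dimension, and a large-deviation count of words should bound it by $p$.

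The correct choice is the reverse of what one might first think. Let $\nu_1$ be Bernoulli on $b$-adic digits, with weight $1-\epsilon$ spread uniformly and weight $\epsilon$ on the rest, or more simply the uniform measure. The set of $\nu_1$-typical points then has almost full Lebesgue dimension, and the complement (nonnormal points) has dimension $1$, which is bad. So the complement must itself be forced small. Pick $G_1$ as a union of sets $\{x: \text{digit frequencies near } q\}$ whose complement consists of points with few occurrences of certain symbols infinitely often. If $\nu_1$ gives each digit mass bounded below by $\delta$, with $b$ large, then points avoiding a fixed digit set of proportion $1-\epsilon$ of the time have entropy at most $H(\epsilon)+\epsilon\log b$, and dimension ratio about $\epsilon+H(\epsilon)/\log b\le p$. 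I will fix parameters so that $G_1$ is the set of $x$ whose digits lie in $\{0\}$ with frequency at least $1-\epsilon$ along a subsequence. With $\nu_1$ putting mass $1-\epsilon/2$ on digit $0$, the law of large numbers gives $\nu_1(G_1)=1$, and the complement has upper bound dimension $\le p$. Full support holds since every digit has positive weight, and $\dimH\nu_1\le1<\infty$.

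In dimension $n$, take $\nu=\nu_1\otimes\mathcal L^{n-1}$ and $G=G_1\times[0,1]^{n-1}$. The complement is $([0,1]\setminus G_1)\times[0,1]^{n-1}$, and by the product inequality $\dimH(A\times B)\le\dimH A+\overline{\dim}_B B$ its dimension is at most $p+n-1$. The main obstacle is the large-deviation estimate bounding the dimension of $[0,1]\setminus G_1$. Here I will use covers by $b$-adic intervals at the levels $m$ where the frequency condition fails, and count words with at most $(1-\epsilon)m$ zeros. Since the count is over the complement, the relevant words have at least $\epsilon m$ nonzero digits, so this needs adjusting. The fix is to swap the roles: define $G_1$ as the set of points with lim sup frequency of the \emph{nonzero} digits at least some $\epsilon'>0$, and give $\nu_1$ a nonzero-digit frequency above $\epsilon'$. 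The complement consists of points with eventually few nonzero digits, and its count of words is $\binom{m}{\epsilon' m}b^{\epsilon' m}$, yielding dimension at most $\epsilon'+H(\epsilon')/\log b\le p$.
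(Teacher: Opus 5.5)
\textbf{The gap is the step where you intersect with $G$.} Let $A$ be the full-$\nu$-measure Borel set with $\dimH^DA\le p'$ supplied by Theorem~\ref{thm:reduction}, and put $E=A\cap G$. Then $[0,1]^n\setminus E=([0,1]^n\setminus A)\cup([0,1]^n\setminus G)\supseteq[0,1]^n\setminus A$. Shrinking $E$ can only enlarge its complement. All you know about $[0,1]^n\setminus A$ is that it is $\nu$-null, and that gives no bound on its Euclidean dimension.

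What you would actually need is that a \emph{specific} set with small complement (for instance $G$ itself) has small $D$-dimension. But $\dimH^D\nu\le p'$ is an infimum over all full-measure sets and does not say which of them is $D$-small. So your choice of $\nu$ and $G$ never enters the second inequality. Also, the obstruction to ``every $\nu$-null set has dimension at most $n-1+p$'' is not that points are null, since points have dimension $0$. It is that, for $p<1$, $[0,1]^n$ contains uncountably many disjoint Borel sets of dimension $>n-1+p$ (e.g.\ digit-frequency sets times $[0,1]^{n-1}$), and a finite measure charges only countably many of them.

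\textbf{The conclusion of Theorem~\ref{thm:reduction} is genuinely too weak here, so no choice of $\nu$ repairs this.} Already for $n=1$, take your Bernoulli measure $\nu_1$ with $\nu_1(0)=1-\beta$. Let $\lambda$ be the base-$b$ Bernoulli measure with $\lambda_0=\lambda_{b-1}=b^{-K}$ and the other $b-2$ weights equal.
\begin{itemize}
\item Since $\lambda_0=\lambda_{b-1}$, $\lambda$ is doubling. Hence $h(x)=\lambda([0,x])$ is quasisymmetric, and $D(x,y)=|h(x)-h(y)|$ is quasisymmetrically equivalent to the Euclidean metric.
\item Comparing $\nu_1$-masses with $\lambda$-lengths of $b$-adic intervals gives $\dimH^D\nu_1\le\bigl(H(\beta)+\beta\log b\bigr)/\bigl(K(1-\beta)\log b\bigr)$. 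This is below any prescribed $p'$ once $K$ is large.
\item The same comparison for Lebesgue measure shows that $h$ maps every set of positive Lebesgue measure onto a set of dimension at least $b/(b+2K)$.
\end{itemize}
So for $b\gg K$, every $E$ with $\dimH^DE\le p<1/2$ is Lebesgue-null, and $\dimH([0,1]\setminus E)=1$. (In general, do the same in base $b^m$, giving tiny weight only to $\nu_1$-typical blocks of $m$ digits.) Yet this $D$ has every property that Theorem~\ref{thm:reduction} guarantees.

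\textbf{The missing idea is that $E$ must be defined through the construction of the metric}, so that both $f(E)$ and its complement can be estimated. The paper does this with Romney's weights:
\begin{itemize}
\item $E_\theta$ is the set of points lying, at infinitely many levels $k$, in cubes that have visited the central region at least $\theta k$ times.
\item Lemma~\ref{lem:romney-diameter} gives $\diam_Df(Q_w)\le C_0(A/M)^kL^{-\theta k}$ for these cubes, so $\dimH f(E_\theta)\le p$ once $L$ is large.
\item The complement is covered by $\theta$-bad cubes. Their number is controlled by the entropy count of Lemma~\ref{lem:bad-cube-count}, because a noncentral step offers only $\asymp M^{n-1}$ children; this is where $n-1$ comes from.
\end{itemize}
Your large-deviation count of words is the right kind of estimate. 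It has to be applied to the set on which the metric actually contracts, and the statement of Theorem~\ref{thm:reduction} does not identify that set.
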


\begin{remark}
When $n=1$, the conclusion is Tukia's theorem~\eqref{eq:tukia}.  When $n\ge2$,
it refines Romney's theorem~\eqref{eq:romney} by replacing the Lebesgue-null
conclusion with an explicit Hausdorff-dimension bound. This bound is sharp
up to the additive term $p$ when $0<p<1$. Indeed, we can further show that
(see Proposition~\ref{prop:sharpness}) whenever a homeomorphism
$f:[0,1]^n\to X$ and a set $E\subset[0,1]^n$
satisfy $\dimH f(E)<1$, one necessarily has
\[
 \dimH([0,1]^n\setminus E)\ge n-1.
\]
\end{remark}

\subsection{Theorem~\ref{thm:reduction} implies Theorem~\ref{thm:main}}
\label{subsec:intro-proof}

We now deduce Theorem~\ref{thm:main} from the dimension-reduction theorem,
Theorem~\ref{thm:reduction}.

\begin{proof}[Proof of Theorem~\ref{thm:main}]
Let $S=\spt\mu$.  Suppose first that $S$ is nonseparable, and let
$f:S\to Y$ be any quasisymmetric homeomorphism onto a metric space $Y$.
The measure $f_\#\mu$ has full support on $Y$, so every Borel set of full
$f_\#\mu$-measure is dense in $Y$. If $\dimH(f_\#\mu)<\infty$, there is a
full-measure Borel set $A\subset Y$ with $\dimH A<\infty$. Every metric space
of finite Hausdorff dimension is separable, so $A$, and hence its closure $Y$,
would be separable. Since $f$ is a homeomorphism, this would make $S$
separable, a contradiction. Therefore $\dimC\mu=+\infty$ when $S$ is
nonseparable.

Assume now that $S$ is separable.  Local finiteness gives every point of $S$
an open neighborhood of finite measure.  A countable base yields a countable
subcover, so $S$ is the union of countably many Borel sets of finite measure.
Thus $\mu$ is $\sigma$-finite on $S$. Choose pairwise disjoint Borel
sets $F_j$ of finite measure whose union is $S$. Define the Borel function
\[
 w=\sum_{j\ge1}\frac{2^{-j}}{1+\mu(F_j)}\mathbf 1_{F_j}.
\]
Then $w>0$ on $S$ and $0<\int_Sw\,d\mu<\infty$.  Define a probability
measure $\nu$ on $S$ by
\[
 d\nu=\frac{w\,d\mu}{\int_Sw\,d\mu}.
\]
Because $w$ is strictly positive, $\mu$ and $\nu$ have exactly the same null
sets; in particular, $\nu$ has full support.  Consequently, for every
quasisymmetric homeomorphism $f:S\to Y$,
\begin{equation}\label{eq:intro-equivalent-measures}
 \dimH(f_\#\mu)=\dimH(f_\#\nu)
 \quad\text{and}\quad
 \dimC\mu=\dimC\nu.
\end{equation}

Suppose that $\dimC\mu<\infty$.  By
\eqref{eq:conformal-dimension} and
\eqref{eq:intro-equivalent-measures}, there is a quasisymmetric
homeomorphism $f:S\to Y$ such that $\dimH(f_\#\nu)<\infty$.  The space $Y$
is separable, and $f_\#\nu$ is a probability measure with full support on $Y$.
For every $p>0$, Theorem~\ref{thm:reduction} supplies a metric $D_p$ on $Y$
such that $\id:(Y,d_Y)\to(Y,D_p)$ is quasisymmetric and
$\dimH^{D_p}(f_\#\nu)\le p$.  Composing this identity map with $f$ and using
\eqref{eq:intro-equivalent-measures} gives $\dimC\mu\le p$. Since $p>0$ is
arbitrary, $\dimC\mu=0$.
\end{proof}

\medskip
We organize the paper as follows. Section~\ref{sec:filling}
develops the weighted hyperbolic filling and the associated chain metric.
Section~\ref{sec:reduction-proof} uses random Borel partitions to prove
Theorem~\ref{thm:reduction}. Section~\ref{sec:doubling} gives a direct proof
of Corollary~\ref{cor:doubling} that does not use Theorem~\ref{thm:main}.
Section~\ref{sec:singular} proves Theorem~\ref{thm:singular} and the sharpness
statement in Proposition~\ref{prop:sharpness}.

Before ending this section, we note that the full-measure definition of
$\dimH^d\mu$ used in this paper is often called the \emph{upper Hausdorff
dimension} of $\mu$. One may instead use the \emph{lower Hausdorff dimension},
defined by
$\inf\{\dimH^d E:E\subset X\text{ is Borel and }\mu(E)>0\}$, and define the
corresponding conformal dimension. We do not know whether
Theorem~\ref{thm:main} remains valid when conformal dimension is defined using
the lower Hausdorff dimension. However, the analogues of
Corollary~\ref{cor:doubling} and Theorem~\ref{thm:reduction} also hold when
the lower Hausdorff dimension is used throughout.

\section{Hyperbolic fillings}\label{sec:filling}

This section carries out the hyperbolic-filling construction used in the
proof of Theorem~\ref{thm:reduction}.
Here a \emph{hyperbolic filling} means a rooted
graph organized into levels.  Its vertices represent points of $X$ at
successively finer scales.  Horizontal edges join nearby points on the same
level, and vertical edges join points on consecutive levels. Such multiscale graphs are standard tools
in metric geometry; see, for example,
\cite{BP03,BS07,CP13,BS18,BBS22,ESS25}.

We carry out the construction in three steps. Subsection~\ref{subsec:nets} defines the filling and its vertex
weights, Subsection~\ref{subsec:chain-bound} proves a uniform lower bound for
path costs, and Subsection~\ref{subsec:chain-metric} transfers the weights to
$X$, defines the metric, and proves the required diameter estimate.

\subsection{Separated sets and admissible weights}\label{subsec:nets}

This subsection constructs the vertices and edges of the filling and then
assigns compatible weights to its vertices.

Let $(X,d)$ be bounded, complete, and separable, with $\diam X<1$. Set
$\alpha_0=10^{-3}$ and fix
\[
0<\alpha<\alpha_0.
\]
For $r>0$, a set is \emph{$r$-separated} if distinct points in it are at
distance at least $r$, and it is maximal $r$-separated if it is not properly
contained in another $r$-separated set.  Choose a base point $o\in X$ and set
$A_0=\{o\}$.  Inductively choose nested
maximal $\alpha^n$-separated sets
$A_n$; thus
\[
A_0\subset A_1\subset A_2\subset\cdots,
\qquad
X\subset\bigcup_{x\in A_n}B(x,\alpha^n),
\]
and $d(x,y)\ge\alpha^n$ for all distinct $x,y\in A_n$.  Inductively extend
$A_{n-1}$ to a maximal $\alpha^n$-separated set $A_n$ by Zorn's lemma.
Each $A_n$ is countable because $X$ is separable.
Put $V_n=A_n\times\{n\}$ and $V=\bigcup_{n\ge0}V_n$.

Two vertices $u=(x,n)$ and $v=(y,n)$ at the same level are
\emph{horizontal neighbors}, written $u\sim v$, if
$d(x,y)<8\alpha^n$; we also declare $u\sim u$. Every
$v=(y,n+1)$ is assigned a \emph{parent} $g(v)=(x,n)$ satisfying
$d(x,y)<\alpha^n$. Such a parent need not be unique; we simply choose and
fix one for each vertex. The unordered pair $\{v,g(v)\}$ is called a
\emph{vertical edge}. The vertical edges form a tree rooted at $(o,0)$:
every vertex has a unique finite parent chain ending at $(o,0)$. Throughout,
$\sim$ refers only to horizontal adjacency. For vertices $u=(x,n)$ and
$v=(y,k)$, possibly at different levels, we use the convention
$d(u,v)=d(x,y)$.

The choice $\alpha<\alpha_0$ gives the following useful observation.  If
$n\ge1$ and
\[
u_0\sim u_1\sim\cdots\sim u_N,\qquad u_i\in V_n,\qquad N\le100,
\]
then
\begin{equation}\label{eq:parent-short-chain}
	d(g(u_0),g(u_N))
	<2\alpha^{n-1}+8N\alpha^n
	\le(2+800\alpha)\alpha^{n-1}
	<8\alpha^{n-1}.
\end{equation}
Thus $g(u_0)\sim g(u_N)$. In particular, the parents of horizontal
neighbors are horizontal neighbors. 

An assignment $\sigma:V\setminus V_0\to[0,\infty)$ is called
\emph{admissible} if it has the following property. Whenever
$u_0=(x_0,n)\sim\cdots\sim u_N=(x_N,n)$ and $v=(y,n-1)$ satisfy
\begin{equation}\label{eq:annular-endpoints}
	B(y,\alpha^{n-1})\cap B(x_0,4\alpha^n)\ne\varnothing,
	\qquad
	(X\setminus B(y,2\alpha^{n-1}))
	\cap B(x_N,4\alpha^n)\ne\varnothing,
\end{equation}
one has
\begin{equation}\label{eq:admissible}
	\sum_{i=0}^N\sigma(u_i)\ge1.
\end{equation}

Fix $0<\eta<1/2$. For $u\in V_n$, $n\ge1$, define
\begin{equation}\label{eq:mweight}
	a(u)=\max\!\left\{\eta,
	\min\!\left\{1-\eta,
	2\sup_{u\sim v\sim w}\sigma(w)\right\}\right\}.
\end{equation}
Thus $\eta\le a(u)\le1-\eta$.

We next construct a positive number $\pi(u)$ at every vertex. Set
$\pi((o,0))=1$. Suppose that $\pi$ has been defined on $V_{n-1}$ and
satisfies
\begin{equation}\label{eq:horizontal-pi}
	\eta\le\frac{\pi(u)}{\pi(v)}\le\eta^{-1}
	\quad\text{whenever }u,v\in V_{n-1}\text{ and }u\sim v.
\end{equation}
For $u\in V_n$, put $q(u)=\pi(g(u))a(u)$. Write $v\succ u$ if
$v\sim u$ and $q(v)>\eta^{-1}q(u)$. There is no chain
$v\succ u\succ w$. Indeed, $v\sim u\sim w$, so
\eqref{eq:parent-short-chain} gives $g(v)\sim g(w)$,
whereas such a chain would give
\[
\pi(g(v))>q(v)>\eta^{-2}q(w)
\ge\eta^{-1}\pi(g(w)),
\]
contrary to \eqref{eq:horizontal-pi}. Define
\begin{equation}\label{eq:pi-recursion}
	\pi(u)=
	\begin{cases}
		q(u),&\text{if no }v\succ u,\\
		\eta\sup_{v\succ u}q(v),&\text{otherwise.}
	\end{cases}
\end{equation}

The next lemma adapts \cite[Lemmas~5.2--5.3]{MT26} to the present setting,
where the sets $A_n$ may be infinite. In \cite{MT26}, the corresponding
sets are finite, so the relevant suprema are attained; the argument here
does not require a maximizing vertex.

\begin{lemma}\label{lem:recursive}
	For $u,v\in V_n$,
	\begin{align}
		q(u)&\le\pi(u)\le\sup_{w\sim u}q(w),
		\label{eq:pi-q}\\
		u\sim v&\quad\Longrightarrow\quad
		\eta\le\frac{\pi(u)}{\pi(v)}\le\eta^{-1}.
		\label{eq:pi-horizontal-new}
	\end{align}
	If
	\begin{equation}\label{eq:rho-def}
		\rho(u)=\frac{\pi(u)}{\pi(g(u))},
	\end{equation}
	then
	\begin{equation}\label{eq:rho-bounds}
		\eta\le a(u)\le\rho(u)
		\le\sup_{w\sim u}a(w)\le1-\eta.
	\end{equation}
\end{lemma}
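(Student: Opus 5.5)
We argue by induction on $n$, proving \eqref{eq:pi-q} and \eqref{eq:pi-horizontal-new} at level $n$ under the inductive hypothesis \eqref{eq:horizontal-pi} at level $n-1$. The base case is level $0$, where $V_0$ is a single vertex and \eqref{eq:horizontal-pi} is trivial. This hypothesis is exactly what makes the recursion \eqref{eq:pi-recursion} well defined and yields the absence of chains $v\succ u\succ w$. The estimate \eqref{eq:rho-bounds} is then derived from \eqref{eq:pi-q}.

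\emph{Step 1: \eqref{eq:pi-q}.} If no $v\succ u$, then $\pi(u)=q(u)$, and $q(u)\le\sup_{w\sim u}q(w)$ because $u\sim u$. Otherwise $\pi(u)=\eta\sup_{v\succ u}q(v)$. Each $v\succ u$ has $q(v)>\eta^{-1}q(u)$, so $\pi(u)\ge q(u)$. Also $v\sim u$ and $\eta<1$, so $\pi(u)\le\sup_{w\sim u}q(w)$. The supremum is finite: $q(v)=\pi(g(v))a(v)\le\pi(g(v))$, and $g(v)\sim g(u)$ by \eqref{eq:parent-short-chain}, so $q(v)\le\eta^{-1}\pi(g(u))$. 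No maximizer is needed anywhere.

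\emph{Step 2: \eqref{eq:pi-horizontal-new}.} By symmetry it suffices to show $\pi(u)\ge\eta\pi(v)$ for $u\sim v$. We split into cases according to whether $v$ has a $\succ$-predecessor.

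If no $w\succ v$, then $\pi(v)=q(v)$. If $v\succ u$, then $\pi(u)\ge\eta q(v)=\eta\pi(v)$ directly from the definition. If not, then $q(v)\le\eta^{-1}q(u)$, so by \eqref{eq:pi-q} $\pi(u)\ge q(u)\ge\eta q(v)=\eta\pi(v)$.

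If some $w\succ v$ exists, then $\pi(v)=\eta\sup_{w\succ v}q(w)$. Fix any $w\succ v$. Since $w\sim v\sim u$, we get $g(w)\sim g(u)$ by \eqref{eq:parent-short-chain}. The no-chain argument shows $v$ has no $\succ$-successor, so $\pi(v)=\eta\sup_{w\succ v}q(w)$ is the whole story. We must bound $\eta q(w)\le\eta^{-1}\pi(u)$, i.e.\ $q(w)\le\eta^{-2}\pi(u)$. Here we use that $u$ cannot be above $\eta$-comparability with $w$ in a chain. If $w\succ u$, then $\pi(u)\ge\eta q(w)$ and we are done. Otherwise $w\not\succ u$, but $w$ need not be a neighbor of $u$. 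So we use parents instead:
\[
q(w)\le\pi(g(w))\le\eta^{-1}\pi(g(u)),\qquad \pi(u)\ge q(u)=\pi(g(u))a(u)\ge\eta\,\pi(g(u)).
\]
This gives $q(w)\le\eta^{-2}\pi(u)$ and hence $\pi(v)\le\eta^{-1}\pi(u)$. Taking the supremum over $w$ completes the case.

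This case bookkeeping, especially confirming that the constant closes as exactly $\eta^{-1}$ rather than a worse power, is the main obstacle. It is where the structure of \eqref{eq:pi-recursion} must be used carefully. If the parent bound gives only $\eta^{-2}$ in some subcase, the fallback is to use the no-chain property more sharply, e.g.\ showing $w\succ u$ is forced whenever $q(w)>\eta^{-1}q(u)$ and $w\sim u$.

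\emph{Step 3: \eqref{eq:rho-bounds}.} Dividing \eqref{eq:pi-q} by $\pi(g(u))$ gives $a(u)\le\rho(u)$. For the upper bound, note that $\pi(u)\le\pi(g(u))a(u)$ in the first case of \eqref{eq:pi-recursion}. In the second case, for each $v\succ u$ we have $q(v)=\pi(g(v))a(v)$. Since $\pi(g(v))<\eta^{-1}q(v)$ is of no help here, we instead use that $g(u)$ has no predecessor-type excess: from $\pi(g(v))\le\eta^{-1}\pi(g(u))$ we obtain
\[
\eta\, q(v)\le \pi(g(u))\,a(v)\le\pi(g(u))\sup_{w\sim u}a(w).
\]
This gives $\rho(u)\le\sup_{w\sim u}a(w)\le1-\eta$, with the last inequality immediate from \eqref{eq:mweight}.
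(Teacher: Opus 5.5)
Your proof is correct and is essentially the paper's argument. The only differences are that you interchange the roles of $u$ and $v$ in the case split for \eqref{eq:pi-horizontal-new}, and that you make explicit the finiteness of $\sup_{v\succ u}q(v)$, which the paper leaves implicit. The hedging in Step~2 is unnecessary: the chain $q(w)\le\pi(g(w))\le\eta^{-1}\pi(g(u))\le\eta^{-2}q(u)\le\eta^{-2}\pi(u)$ holds for every $w\succ v$ and closes with exactly the constant $\eta^{-1}$, so neither the subcase $w\succ u$ nor any fallback is needed.
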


\begin{proof}
	The first inequality in \eqref{eq:pi-q} follows from the definition. For
	the second inequality in \eqref{eq:pi-q}, if the second line of
	\eqref{eq:pi-recursion} applies, then
	$\pi(u)=\eta\sup_{v\succ u}q(v)\le\sup_{v\sim u}q(v)$; the other case is
	immediate.
	
	Let $u\sim v$. By symmetry, it is enough to show that
	$\pi(v)\ge\eta\pi(u)$. Suppose first that $\pi(u)=q(u)$. If $u\succ v$,
	then
	\[
	\pi(v)=\eta\sup_{w\succ v}q(w)\ge\eta q(u)=\eta\pi(u).
	\]
	If $u\not\succ v$, then $q(u)\le\eta^{-1}q(v)$, and hence
	\[
	\pi(v)\ge q(v)\ge\eta q(u)=\eta\pi(u).
	\]
	
	Suppose now that $\pi(u)>q(u)$. Then
	$\pi(u)=\eta\sup_{w\succ u}q(w)$. For every $w\succ u$, the relation
	$w\sim u\sim v$ and \eqref{eq:parent-short-chain} give
	$g(w)\sim g(v)$.
	Using \eqref{eq:horizontal-pi} and $\eta\le a(w)\le1-\eta$, we obtain
	\[
	q(w)=\pi(g(w))a(w)
	\le\pi(g(w))
	\le\eta^{-1}\pi(g(v))
	\le\eta^{-2}q(v).
	\]
	Taking the supremum over $w\succ u$ gives
	\[
	\pi(u)=\eta\sup_{w\succ u}q(w)
	\le\eta^{-1}q(v)
	\le\eta^{-1}\pi(v).
	\]
	This proves \eqref{eq:pi-horizontal-new}.
	
	Since $q(u)=\pi(g(u))a(u)$, the lower bounds in
	\eqref{eq:rho-bounds} follow from \eqref{eq:pi-q}. If $\pi(u)=q(u)$,
	the upper bounds are immediate. Otherwise, for $v\succ u$ we have
	$g(v)\sim g(u)$, and therefore
	\[
	\eta q(v)=\eta\pi(g(v))a(v)
	\le\pi(g(u))a(v).
	\]
	Taking the supremum proves the upper bounds in \eqref{eq:rho-bounds}.
\end{proof}

Lemma~\ref{lem:recursive} shows that the recursive construction can be
continued at every level and yields the following product formula.

For $u\in V_n$ and $0\le j\le n$, let $g(u)_j\in V_j$ denote the unique
level-$j$ vertex on the parent chain from $(o,0)$ to $u$. We call it the
\emph{level-$j$ ancestor} of $u$. Thus
$g(g(u)_{j+1})=g(u)_j$, $g(u)_0=(o,0)$, and $g(u)_n=u$. By
\eqref{eq:rho-def} and \eqref{eq:rho-bounds},
\begin{equation}\label{eq:pi-product}
	\pi(u)=\prod_{j=1}^n\rho(g(u)_j),
	\qquad
	(1-\eta)^{-1}\le\frac{\pi(g(u))}{\pi(u)}\le\eta^{-1}.
\end{equation}

\subsection{The chain lower bound}\label{subsec:chain-bound}

We first prove that every horizontal
chain crossing the annulus between radii $\alpha^{n-1}$ and
$2\alpha^{n-1}$ has total $\rho$-cost at least $1$.

\begin{lemma}\label{lem:annular}
	Let $v\in V_{n-1}$ and let $u_0\sim\cdots\sim u_N$ be a level-$n$
	chain satisfying \eqref{eq:annular-endpoints}. Then
	\begin{equation}\label{eq:annular-rho}
		\sum_{i=1}^N
		\inf\{\rho(w):w\sim u_{i-1}\text{ or }w\sim u_i\}\ge1.
	\end{equation}
\end{lemma}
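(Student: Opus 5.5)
The plan is to compare each infimum in \eqref{eq:annular-rho} with the $\sigma$-values at the two chain vertices $u_{i-1},u_i$. Then \eqref{eq:annular-rho} reduces to the admissibility inequality \eqref{eq:admissible}, which I assume holds as a hypothesis on $\sigma$ here. The one degenerate situation is when the cap $1-\eta$ in \eqref{eq:mweight} is active; the lower bound $\eta$ on every $\rho$ handles it.

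\emph{Step 1: $N\ge2$.} Write $u_i=(x_i,n)$, $v=(y,n-1)$. By \eqref{eq:annular-endpoints} there are points $z_0\in B(y,\alpha^{n-1})\cap B(x_0,4\alpha^n)$ and $z_N\in B(x_N,4\alpha^n)$ with $d(y,z_N)\ge2\alpha^{n-1}$. Hence $d(z_0,z_N)>\alpha^{n-1}$. On the other hand, $d(z_0,z_N)<8\alpha^n+8N\alpha^n$ by horizontal adjacency. So $8(N+1)\alpha>1$, which forces $N+1>1/(8\alpha)>125$. In particular $N\ge2$.

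\emph{Step 2: a pointwise bound for each term.} For $1\le i\le N$, let $T_i$ denote the $i$-th infimum. Take $w$ with $w\sim u_{i-1}$ or $w\sim u_i$. Since $u_{i-1}\sim u_i$ and $\sim$ is reflexive, both $u_{i-1}$ and $u_i$ appear as the terminal vertex $z$ in some chain $w\sim v'\sim z$. For example, if $w\sim u_i$, use $w\sim u_i\sim u_i$ and $w\sim u_i\sim u_{i-1}$; the case $w\sim u_{i-1}$ is symmetric. Hence by \eqref{eq:mweight} and \eqref{eq:rho-bounds},
\[
\rho(w)\ge a(w)\ge\min\bigl\{1-\eta,\;2\max\{\sigma(u_{i-1}),\sigma(u_i)\}\bigr\}.
\]
Also $\rho(w)\ge\eta$ for every $w$. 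Taking infima gives $T_i\ge\eta$ and
\[
T_i\ge\min\{1-\eta,\;\sigma(u_{i-1})+\sigma(u_i)\}.
\]

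\emph{Step 3: case split.} If some $T_j\ge1-\eta$, then, using $T_i\ge\eta$ for $i\ne j$,
\[
\sum_{i=1}^N T_i\ge1-\eta+(N-1)\eta=1+(N-2)\eta\ge1
\]
by Step 1. Otherwise every $T_i\ge\sigma(u_{i-1})+\sigma(u_i)$, so
\[
\sum_{i=1}^NT_i\ge\sum_{i=0}^N\sigma(u_i)\ge1
\]
by admissibility \eqref{eq:admissible} applied to the chain $u_0\sim\cdots\sim u_N$ and $v$.

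The only genuinely delicate point is the cap $1-\eta<1$ in \eqref{eq:mweight}. This cap is exactly why the length estimate $N\ge2$ of Step 1 is needed. The rest is bookkeeping with reflexivity of $\sim$.
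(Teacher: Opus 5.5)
Your proposal is correct and follows essentially the same route as the paper: a distance estimate showing the chain has at least two edges, the cap case handled by pairing a term at least $1-\eta$ with another term bounded below by $\eta$, and otherwise the bound of each infimum by $\sigma(u_{i-1})+\sigma(u_i)$ via the two-step neighborhood in \eqref{eq:mweight}, summed and compared with \eqref{eq:admissible}. The differences are cosmetic: the paper first reduces from $\rho$ to $a$ via \eqref{eq:rho-bounds}, and proves $d(x_0,x_N)>8\alpha^n$ rather than your stronger $N\ge125$.
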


\begin{proof}
	Writing $u_i=(x_i,n)$ and $v=(y,n-1)$, the two conditions in
	\eqref{eq:annular-endpoints} give points
	$\xi_0\in B(y,\alpha^{n-1})\cap B(x_0,4\alpha^n)$ and
	$\xi_N\in (X\setminus B(y,2\alpha^{n-1}))\cap B(x_N,4\alpha^n)$. Thus
	\[
	d(x_0,x_N)\ge d(y,\xi_N)-d(y,\xi_0)-d(\xi_0,x_0)-d(\xi_N,x_N)>(1-8\alpha)\alpha^{n-1}>8\alpha^n,
	\]
	where the last inequality follows from $\alpha<10^{-3}$. Hence the chain
	has at least two edges. By \eqref{eq:rho-bounds}, it is enough to prove
	\begin{equation}\label{eq:annular-a}
		\sum_{i=1}^N
		\inf\{a(w):w\sim u_{i-1}\text{ or }w\sim u_i\}\ge1.
	\end{equation}
	
	If one of the infima in \eqref{eq:annular-a} equals $1-\eta$, then that
	term together with any other term contributes at least
	$(1-\eta)+\eta=1$. Suppose, therefore, that every infimum is strictly
	less than $1-\eta$. Fix $i$ and set
	\[
	s_i=\sigma(u_{i-1})+\sigma(u_i).
	\]
	For every $w$ horizontally adjacent to $u_{i-1}$ or $u_i$, both
	$u_{i-1}$ and $u_i$ lie in the two-step horizontal neighborhood of $w$.
	Consequently,
	\[
	2\sup_{w\sim z\sim z'}\sigma(z')
	\ge2\max\{\sigma(u_{i-1}),\sigma(u_i)\}
	\ge s_i.
	\]
	Thus $a(w)\ge\min\{1-\eta,s_i\}$. The assumption on the corresponding
	infimum forces $s_i<1-\eta$, and hence that infimum is at least $s_i$.
	Summing in $i$ gives
	\[
	\begin{aligned}
		\sum_{i=1}^N
		\inf\{a(w):w\sim u_{i-1}\text{ or }w\sim u_i\}
		&\ge\sum_{i=1}^Ns_i\\
		&=\sigma(u_0)+2\sum_{i=1}^{N-1}\sigma(u_i)+\sigma(u_N)\\
		&\ge\sum_{i=0}^N\sigma(u_i)\ge1,
	\end{aligned}
	\]
	where the last inequality is \eqref{eq:admissible}.
	This proves \eqref{eq:annular-rho}.
\end{proof}

For vertices $u\in V_r$ and $v\in V_s$, let $k(u,v)$ be the largest
integer $0\le j\le\min\{r,s\}$ for which their level-$j$ ancestors are
horizontal neighbors, and put
\begin{equation}\label{eq:Piuv}
	\Pi(u,v)=\max\{\pi(g(u)_{k(u,v)}),
	\pi(g(v)_{k(u,v)})\}.
\end{equation}

The next lemma, whose proof follows \cite[Lemma~5.4]{MT26}, bounds the
ancestor weight $\Pi$ from \eqref{eq:Piuv} by a constant multiple of every
path cost.  This estimate
is used to construct the chain metric in
Subsection~\ref{subsec:chain-metric}.

\begin{lemma}\label{lem:discrete-chain}
	There exists $c_\eta>0$ with the following property. If
	$P=(u_0,\ldots,u_m)$ is a finite path consisting of horizontal and vertical
	edges, then
	\[
	\sum_{i=0}^m\pi(u_i)\ge c_\eta\Pi(u_0,u_m).
	\]
\end{lemma}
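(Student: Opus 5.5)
Without loss of generality $\Pi(u_0,u_m)=\pi(g(u_0)_k)$, where $k=k(u_0,u_m)$; the other case is symmetric. The plan is to follow \cite[Lemma~5.4]{MT26}: first reduce to a path that stays at levels $\ge k$, then extract a geometric series of costs from the portion of the path near $u_0$, and handle the remaining case with the annular chain bound of Lemma~\ref{lem:annular}.

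\emph{Step 1 (projection).} Replace every vertex of $P$ at a level $j>k$ by its ancestors. If $u_i$ lies at level $\ell$ and $j\le\ell$, then \eqref{eq:pi-product} gives $\pi(g(u_i)_j)\ge\pi(u_i)$. By \eqref{eq:parent-short-chain}, horizontal edges project to horizontal edges, and vertical edges project to vertical edges or to repeated vertices. Thus the costs of low-level vertices are controlled from below. The real content lies in the high-level vertices, since those are cheap.

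\emph{Step 2 (case split).} Let $j_0$ be the minimal level visited by $P$.

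\emph{Case $j_0\le k$.} Then $P$ must pass through the vertical ancestor chain of $u_0$ from its level $r$ down toward level $k$, or through horizontal neighbours of that chain at each level. Each vertex at level $j\in[k,r]$ reached this way has weight at least $\eta^{2}\pi(g(u_0)_j)$, using \eqref{eq:pi-horizontal-new} together with $\pi(\text{neighbor of parent})\ge\eta\pi(\text{parent})$. Concretely, I plan to show by induction on the step count that if $P$ has visited level $j$, then some vertex at level $j$ on $P$ is horizontally adjacent (within boundedly many steps) to $g(u_0)_j$; this uses $d(x,g(x)_j)<\alpha^j/(1-\alpha)$. The visit at level $k$ then gives $\sum\pi(u_i)\ge\eta^{2}\pi(g(u_0)_k)$, and we are done.

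\emph{Case $j_0>k$.} The path stays at levels $>k$. Since the level-$(k+1)$ ancestors of $u_0$ and $u_m$ are not horizontal neighbours, we have $d(u_0,u_m)\gtrsim\alpha^{k}$. Projecting the whole path to level $k+1$ (Step 1) gives a horizontal chain at level $k+1$ joining points at distance $\ge(8-O(\alpha))\alpha^{k+1}$. I will choose $v=g(u_0)_k$ and use the geometric separation, together with the fact that $d(u_0,u_m)$ is large relative to $\alpha^{k+1}$, to find a sub-chain satisfying \eqref{eq:annular-endpoints}. Possibly this requires choosing the level $n=k+1$ more carefully, perhaps by passing to a coarser level $k'\le k$ where the annulus of radii $\alpha^{k'-1}$ and $2\alpha^{k'-1}$ is crossed, and then comparing $\pi$ values at levels $k'$ and $k$ through \eqref{eq:pi-product} with a loss of at most $\eta^{-O(1)}$. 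Lemma~\ref{lem:annular} then gives $\sum\rho\ge1$ over this chain. Multiplying by the parent weights, which are all comparable to $\pi(v)$ by \eqref{eq:pi-horizontal-new}, yields $\sum\pi\ge c\,\pi(v)\ge c\,\eta\,\Pi$.

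The main obstacle is that the projected path may have vertices at high levels whose $\pi$ is much smaller than the projected weight, since projection only bounds the ancestor weight from below. To fix this, instead of projecting naively I will sum along each maximal sub-path lying inside the descendants of a single level-$n$ vertex $w$ and bound its cost below by $c\,\pi(w)$, via induction on depth, using Lemma~\ref{lem:annular} at every scale. This is exactly the self-similar induction of \cite{MT26}, with a constant $c_\eta$ obtained from the fixed-point relation $c=\min\{\eta^2,\ \eta^{C}\cdot(\text{annular bound})\}$. Keeping the constant uniform without relying on finiteness of the $A_n$ (the suprema need not be attained) is where care is needed; the infima already appear in Lemma~\ref{lem:annular}, so that lemma is designed to absorb this issue.
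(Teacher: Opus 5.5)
There is a genuine gap. The proposal never gives a working way to bound from below the cost of a path that travels horizontally at levels much finer than the scale of $\Pi(u_0,u_m)$, and that is the whole content of the lemma.

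\emph{Case $j_0\le k$.} Nothing forces $P$ to descend along, or next to, the ancestor chain of $u_0$. It may first run a long horizontal chain at level $r$ (or at any level between $k$ and $r$) and only then descend. So the first level-$k$ vertex it meets can be far from $g(u_0)_k$, and weights of non-adjacent vertices on the same level are not comparable. Your claimed induction (``some level-$j$ vertex of $P$ is horizontally adjacent to $g(u_0)_j$'') is false for general paths. What one can hope for is a dichotomy: either it holds, or $P$ has already paid $\gtrsim\pi(g(u_0)_j)$. Proving the second alternative is exactly the difficulty of your other case.

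\emph{Case $j_0>k$.} Projection only increases weights, as you note, so bounds for the projected chain say nothing about $P$. The scale is also wrong. The condition $g(u_0)_{k+1}\not\sim g(u_m)_{k+1}$ only guarantees $d(u_0,u_m)>(8-2/(1-\alpha))\alpha^{k+1}$, far less than the width $\alpha^k$ of an annulus around $g(u_0)_k$. So that annulus, or any coarser one, need not be crossed at all. The forced crossing is one scale finer, so passing to a coarser level $k'$ goes the wrong way.

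Your repair does not close the gap. A maximal sub-path among the descendants of a level-$n$ vertex $w$ need not cost anything like $c\,\pi(w)$: by \eqref{eq:pi-product}, a single descendant $j$ levels below $w$ has weight at most $(1-\eta)^j\pi(w)$. Only sub-paths that actually cross an annulus admit such a bound. More seriously, your induction loses a factor $\eta^{C}$ each time Lemma~\ref{lem:annular} is applied. That lemma gives only $\sum\rho\ge1$, with no gain to absorb the loss, so the losses compound over the unboundedly many levels a path may visit and no uniform $c_\eta$ results. The difficulty is not that suprema over infinite $A_n$ may fail to be attained.

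The missing idea is a \emph{lossless} level-lowering step, which the paper builds as follows.
\begin{enumerate}
\item Set $\pi_*(u)=\inf_{w\sim u}\pi(w)$. Give a horizontal edge $uv$ the length $\min\{\pi_*(u),\pi_*(v)\}$ and a vertical edge $g(u)u$ the length $\eta^{-4}\pi_*(u)$. Then $\ell(P)\le2\eta^{-4}\sum_i\pi(u_i)$.
\item Deduce from Lemma~\ref{lem:annular} that every crossing $R\in\Gamma(v)$ has $\ell(R)\ge\pi_*(v)$.
\item Take an excursion to level $h$ with endpoints in $V_{h-1}$ and cut it greedily into successive crossings of annuli around $b_r=g(z_{i_r})$. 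One gets $b_r\sim b_{r+1}$ and $\ell(b_rb_{r+1})\le\pi_*(b_r)$, so the excursion is replaced by a level-$(h-1)$ horizontal path of no greater $\ell$-length.
\item Iterate this, together with the variant for endpoints $x\not\sim y$ with $g(x)\not\sim g(y)$. This brings $P$ down to a path joining $g(u_0)_{k+1}\not\sim g(u_m)_{k+1}$ with no accumulated loss.
\item The first edge of that path alone has length at least $\eta^2\pi(g(u_0)_{k+1})\ge\eta^4\Pi(u_0,u_m)$.
\end{enumerate}
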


\begin{proof}
	For $u\in V_n$, set
	\[
	\pi_*(u)=\inf_{w\sim u}\pi(w).
	\]
	Since $u\sim u$, \eqref{eq:pi-horizontal-new} gives
	\begin{equation}\label{eq:pi-star}
		\eta\pi(u)\le\pi_*(u)\le\pi(u).
	\end{equation}
	For a horizontal edge $u\sim v$, assign the length
	\[
	\ell(uv)=\min\{\pi_*(u),\pi_*(v)\},
	\]
	and for a vertical edge $g(u)u$, assign the length
	\[
	\ell(g(u)u)=\eta^{-4}\pi_*(u).
	\]
	For a finite path $R=(v_0,\ldots,v_M)$, write
	$\ell(R)=\sum_{i=1}^M\ell(v_{i-1}v_i)$.
	
	For each edge of $P$, choose an endpoint at the higher level if the edge
	is vertical, and either endpoint if it is horizontal.  For a horizontal
	edge, its length is at most the weight of either chosen endpoint by
	\eqref{eq:pi-star}; for a vertical edge $g(u)u$, its length is at most
	$\eta^{-4}\pi(u)$.  Each occurrence $u_i$ in the path can be chosen only
	for the edge entering it and the edge leaving it, so each occurrence is
	chosen at most twice.  Therefore
	\begin{equation}\label{eq:path-cost-compare}
		\ell(P)\le2\eta^{-4}\sum_{i=0}^m\pi(u_i),
		\qquad
		\sum_{i=0}^m\pi(u_i)\ge\frac{\eta^4}{2}\ell(P).
	\end{equation}
	
	Let $\Gamma(v)$ be the family of level-$n$ horizontal chains satisfying
	\eqref{eq:annular-endpoints} relative to $v\in V_{n-1}$.
	
	\medskip
	\noindent{\itshape Claim 1. If $R\in\Gamma(v)$, then
	$\ell(R)\ge\pi_*(v)$.\par}
	\medskip

	Indeed, consider the initial subchain ending at the first exit from the
	relevant annulus. For each edge $u_{i-1}\sim u_i$ of this subchain and
	each $w$ horizontally adjacent to $u_{i-1}$ or $u_i$, the parent $g(w)$
	is horizontally adjacent to $v$. Choose
	$u_j\in\{u_{i-1},u_i\}$ with $d(u_j,v)<2\alpha^{n-1}$, and choose
	$u_k\in\{u_{i-1},u_i\}$ with $w\sim u_k$. Then
	\[
	d(g(w),v)\le d(g(w),w)+d(w,u_k)+d(u_k,u_j)+d(u_j,v)<(3+16\alpha)\alpha^{n-1}<8\alpha^{n-1}.
	\]
	Hence
	\[
	\pi(w)=\pi(g(w))\rho(w)\ge\pi_*(v)\rho(w),
	\]
	and therefore
	\[
	\ell(u_{i-1}u_i)
	\ge\pi_*(v)
	\inf\{\rho(w):w\sim u_{i-1}\text{ or }w\sim u_i\}.
	\]
	Summing and applying Lemma~\ref{lem:annular} proves Claim~1.
	\nobreak\hspace{0.5em}$\square$\par\medskip
	\noindent{\itshape Claim 2. Suppose that a subpath
	$z_0,z_1,\ldots,z_M$ has endpoints in $V_{h-1}$
	and all its interior vertices in $V_h$. Then there is a horizontal path
	$R\subset V_{h-1}$ from $z_0$ to $z_M$ such that
	\begin{equation}\label{eq:e1}
		\ell(R)\le\ell(z_1,\ldots,z_M).
	\end{equation}
	\par}
	\medskip

	If $z_0\sim z_M$, take $R=(z_0,z_M)$. Since
	$g(z_{M-1})=z_M$, \eqref{eq:pi-star} and \eqref{eq:rho-bounds} give
	\[
	\ell(z_0z_M)
	\le\pi(z_M)
	\le\eta^{-1}\pi(z_{M-1})
	\le\eta^{-2}\pi_*(z_{M-1})
	\le\ell(z_{M-1}z_M).
	\]
	
	Assume $z_0\not\sim z_M$. Set $i_0=1$ and $b_0=z_0=g(z_1)$.
	Whenever $b_r=g(z_{i_r})\not\sim z_M$, choose the least
	$i_{r+1}>i_r$ such that
	\[
	(z_{i_r},\ldots,z_{i_{r+1}})\in\Gamma(b_r),
	\]
	and put $b_{r+1}=g(z_{i_{r+1}})$. This index exists and satisfies
	$i_{r+1}\le M-1$ because $g(z_{M-1})=z_M$. Its minimality gives
	$d(b_r,z_{i_{r+1}-1})<2\alpha^{h-1}$, and hence
	\[
		d(b_r,b_{r+1})\le d(b_r,z_{i_{r+1}-1})+d(z_{i_{r+1}-1},z_{i_{r+1}})+d(z_{i_{r+1}},b_{r+1})
		<(3+8\alpha)\alpha^{h-1}
		<8\alpha^{h-1}.
	\]
	Thus $b_r\sim b_{r+1}$, and Claim~1 yields
	\begin{equation}\label{eq:selected-subchain}
		\ell(b_rb_{r+1})
		\le\pi_*(b_r)
		\le\ell(z_{i_r},\ldots,z_{i_{r+1}}).
	\end{equation}
	The indices increase strictly, so after finitely many steps
	$b_L\sim z_M$. The selected subchains have disjoint edge sets, and the
	first case bounds the last edge $b_Lz_M$ by
	$\ell(z_{M-1}z_M)$. Thus $R=(b_0,\ldots,b_L,z_M)$ satisfies
	\eqref{eq:e1} by summing \eqref{eq:selected-subchain} over the disjoint
	subchains. Since $b_0=z_0$, this proves Claim~2.
	\nobreak\hspace{0.5em}$\square$\par\medskip
	
	For $n\ge0$, let $\mathcal G_{\le n}$ denote the subgraph induced by
	$\bigcup_{j=0}^nV_j$.
	
	\medskip
	\noindent{\itshape Claim 3. Suppose that $x\in V_r$, $y\in V_s$, and
	$r\le s$. Every finite path
	from $x$ to $y$ can be replaced, without increasing its $\ell$-length,
	by a path in $\mathcal G_{\le r}$ from $x$ to $g(y)_r$. Moreover, if
	$x,y\in V_n$, $x\not\sim y$, and $g(x)\not\sim g(y)$, then it can be
	replaced by a path in $\mathcal G_{\le n-1}$ from $g(x)$ to $g(y)$.\par}
	\medskip

	To prove the first assertion, let $H$ be the largest level visited by the
	path. If $H>s$, every maximal subchain of level-$H$ vertices lies in a
	complete block whose endpoints are in $V_{H-1}$. Apply Claim~2 to all
	such blocks. The resulting path does not meet $V_H$ and is no longer than
	the original path. Since the path is finite, iteration reduces it to
	$\mathcal G_{\le s}$.
	
	If $r=s$, this proves the first assertion. Suppose $r<s$. After lowering
	all complete level-$s$ blocks, the vertices of the path that remain in
	$V_s$, if any, form the terminal subchain
	\[
	z_0,z_1,\ldots,z_M=y,
	\qquad z_0\in V_{s-1},\quad z_1,\ldots,z_M\in V_s.
	\]
	Reverse this terminal subchain and temporarily add the parent $g(y)$ at
	its beginning:
	\[
	g(y),\ y=z_M,\ z_{M-1},\ldots,z_1,\ z_0.
	\]
	This is a complete level-$s$ block. By Claim~2, it
	can be replaced by a horizontal path $R\subset V_{s-1}$ from $g(y)$ to
	$z_0$ satisfying
	\[
	\ell(R)\le\ell(y,z_{M-1},\ldots,z_1,z_0).
	\]
	Reverse $R$ and append it to the part of the original path ending at
	$z_0$. This replaces the terminal level-$s$ subchain by a path ending at
	$g(y)$, without increasing length. Repeating at levels
	$s-1,\ldots,r+1$ gives a path in $\mathcal G_{\le r}$ from $x$ to
	$g(y)_r$.
	
	For the second assertion, first apply the first assertion so that
	the path lies in $\mathcal G_{\le n}$. If it meets $V_{n-1}$, let $a$ and
	$b$ be its first and last vertices in $V_{n-1}$. Apply the first assertion
	to the reverse of the initial segment from $x$ to $a$, to the middle
	segment from $a$ to $b$, and to the terminal segment from $b$ to $y$.
	After reversing the first replacement and concatenating, we obtain the
	required path from $g(x)$ to $g(y)$ in $\mathcal G_{\le n-1}$.
	
	It remains to treat the case in which the path never meets $V_{n-1}$.
	It is then a horizontal chain
	\[
	x=x_0\sim x_1\sim\cdots\sim x_M=y
	\]
	in $V_n$. Put $p_i=g(x_i)$. Starting with $j_0=0$, choose
	$j_{q+1}>j_q$ minimally so that
	\[
	(x_{j_q},\ldots,x_{j_{q+1}})\in\Gamma(p_{j_q}),
	\]
	and stop when the remaining chain contains no such crossing. Since
	$p_0\not\sim p_M$, at least one crossing is selected. The argument
	leading to \eqref{eq:selected-subchain} gives, for every selected crossing,
	\[
	p_{j_q}\sim p_{j_{q+1}},
	\qquad
	\ell(p_{j_q}p_{j_{q+1}})
	\le\ell(x_{j_q},\ldots,x_{j_{q+1}}).
	\]
	
	Let $0=j_0<\cdots<j_L$ be the selected indices. The minimality of the
	last crossing gives
	\[
	d(p_{j_{L-1}},x_{j_L-1})<2\alpha^{n-1},
	\]
	whereas the absence of a crossing after $j_L$ gives
	$d(p_{j_L},x_M)<2\alpha^{n-1}$. Therefore
	\[
	\begin{aligned}
		d(p_{j_{L-1}},p_M)
		&\le d(p_{j_{L-1}},x_{j_L-1})+d(x_{j_L-1},x_{j_L})
		+d(x_{j_L},p_{j_L})+d(p_{j_L},x_M)+d(x_M,p_M)\\
		&<2\alpha^{n-1}+8\alpha^n+\alpha^{n-1}
		+2\alpha^{n-1}+\alpha^{n-1}\\
		&=(6+8\alpha)\alpha^{n-1}
		<8\alpha^{n-1}.
	\end{aligned}
	\]
	Thus $p_{j_{L-1}}\sim p_M$. By Claim~1, the last selected subchain has
	length at least $\pi_*(p_{j_{L-1}})$, which is at least the length of the
	last horizontal edge:
	\[
	\ell(p_{j_{L-1}}p_M)
	\le\pi_*(p_{j_{L-1}})
	\le\ell(x_{j_{L-1}},\ldots,x_{j_L}).
	\]
	The selected subchains have disjoint edge sets, so
	\[
	p_0\sim p_{j_1}\sim\cdots\sim p_{j_{L-1}}\sim p_M
	\]
	is no longer than the original chain. This proves Claim~3.
	\nobreak\hspace{0.5em}$\square$\par\medskip
	
	We now finish the proof. The case $m=0$ is immediate, so assume $m\ge1$.
	After interchanging the endpoints if necessary, write
	$u_0\in V_r$, $u_m\in V_s$, with $r\le s$, and put
	$k=k(u_0,u_m)$. Claim~3 first gives a path no longer than $P$ from
	$u_0$ to $g(u_m)_r$ in $\mathcal G_{\le r}$.
	
	Suppose $k<r$. Repeatedly apply the second assertion of Claim~3 until the
	endpoints have reached level $k+1$. We obtain a path $R$ no longer than
	$P$, with endpoints
	\[
	x=g(u_0)_{k+1},\qquad y=g(u_m)_{k+1},
	\]
	such that $x\not\sim y$ and $g(x)\sim g(y)$. By
	Lemma~\ref{lem:recursive} and \eqref{eq:pi-star}, every edge incident to a
	vertex $z$ has length at least $\eta^2\pi(z)$. Indeed, if $zw$ is
	horizontal, then
	\[
	\pi_*(z)\ge\eta\pi(z),
	\qquad
	\pi_*(w)\ge\eta\pi(w)\ge\eta^2\pi(z),
	\]
	and therefore $\ell(zw)\ge\eta^2\pi(z)$. If $w=g(z)$, then
	\[
	\ell(zw)=\eta^{-4}\pi_*(z)
	\ge\eta^{-3}\pi(z)\ge\eta^2\pi(z).
	\]
	Finally, if $g(w)=z$, then
	\[
	\ell(zw)=\eta^{-4}\pi_*(w)
	\ge\eta^{-3}\pi(w)
	\ge\eta^{-2}\pi(z)
	\ge\eta^2\pi(z).
	\]
	Applying this estimate to the first edge of $R$ and using
	Lemma~\ref{lem:recursive} gives
	\[
	\begin{aligned}
		\ell(P)&\ge\ell(R)\ge\eta^2\pi(x)
		\ge\eta^3\pi(g(x))\\
		&\ge\eta^4\max\{\pi(g(x)),\pi(g(y))\}
		=\eta^4\Pi(u_0,u_m).
	\end{aligned}
	\]
	
	If $k=r$, applying the same estimate to the first edge of the original
	path and using $u_0\sim g(u_m)_r$ together with
	Lemma~\ref{lem:recursive} gives
	\[
	\ell(P)\ge\eta^2\pi(u_0)
	\ge\eta^3\max\{\pi(u_0),\pi(g(u_m)_r)\}
	=\eta^3\Pi(u_0,u_m).
	\]
	This is stronger than the bound required in the case $k<r$. Combining
	both cases with
	\eqref{eq:path-cost-compare}
	gives
	\[
	\sum_{i=0}^m\pi(u_i)
	\ge\frac{\eta^8}{2}\Pi(u_0,u_m).
	\]
	Thus the lemma holds with $c_\eta=\eta^8/2$.
\end{proof}

\subsection{The chain metric}\label{subsec:chain-metric}

We now convert the vertex weights into a scale function on $X$ and then take
the associated intrinsic chain metric.  Lemma~\ref{lem:discrete-chain}
provides the lower bound that prevents this metric from degenerating.

For distinct $x,y\in X$, define
\begin{align}
	\kappa(x,y)&=\max\{n\ge0:\text{some }(z,n)\in V_n
	\text{ satisfies }x,y\in B(z,2\alpha^n)\},
	\label{eq:kappa-xy}\\
	Q(x,y)&=\sup\{\pi((z,n)):(z,n)\in V_{\kappa(x,y)},
	\ x,y\in B(z,2\alpha^{\kappa(x,y)})\}.
	\label{eq:Qxy}
\end{align}
The set of levels in \eqref{eq:kappa-xy} is nonempty and finite when
$x\ne y$. Define
\begin{equation}\label{eq:Dchain}
	D_\sigma(x,y)=\inf_{x=x_0,\ldots,x_m=y}
	\sum_{i=1}^m Q(x_{i-1},x_i),
	\qquad D_\sigma(x,x)=0.
\end{equation}

The next proposition constructs the metric on $X$ and proves the diameter
estimate used in Section~\ref{sec:reduction-proof}.  It adapts
\cite[Lemma~2.3]{MTTree26} to the present filling, where $V_n$ need not be
finite.
Unlike the compact setting of \cite[Corollary~5.6]{MT26}, our application
uses the estimate only at $\mu$-almost every point, with the weights chosen
by random partitions.

\begin{proposition}\label{prop:filling}
	The function $D_\sigma$ is a metric, and
	$\id:(X,d)\to(X,D_\sigma)$ is quasisymmetric. Moreover,
	there is a constant $C>0$, depending only on $\alpha$ and $\eta$, such
	that, for every $x\in X$ and $n\ge1$,
	\begin{equation}\label{eq:filling-diameter}
		\diam_{D_\sigma}B_d(x,\tfrac14\alpha^n)
		\le C\prod_{j=1}^n\tau_j(x),
	\end{equation}
	where
	\begin{equation}\label{eq:tau}
		\tau_j(x)=
		\begin{cases}
			\eta,&\sigma(z,j)=0\text{ for every }z\in A_j
			\text{ with }d(x,z)\le30\alpha^j,\\
			1-\eta,&\text{otherwise.}
		\end{cases}
	\end{equation}
\end{proposition}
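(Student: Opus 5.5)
The plan has four steps: an upper bound on $Q$ comparing it with $\pi$, a lower bound on $D_\sigma$ from Lemma~\ref{lem:discrete-chain}, the diameter estimate via the product formula \eqref{eq:pi-product}, and quasisymmetry from two-sided estimates of $D_\sigma(x,y)$ in terms of $Q(x,y)$.

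\emph{Step 1: upper bound for $Q$.} Write $k=\kappa(x,y)$. Maximality of $\kappa$ shows that any two vertices $(z,k),(z',k)$ with $x,y\in B(z,2\alpha^k)\cap B(z',2\alpha^k)$ satisfy $d(z,z')<4\alpha^k$. Hence they are horizontal neighbours, and \eqref{eq:pi-horizontal-new} shows that the supremum in \eqref{eq:Qxy} is comparable, up to $\eta^{-1}$, to $\pi$ of any single admissible vertex. Moreover $d(x,y)<4\alpha^k$. Conversely, if $d(x,y)<\alpha^{k+1}/2$, the point $z\in A_{k+1}$ with $d(x,z)<\alpha^{k+1}$ would give a deeper level, so $d(x,y)\ge\alpha^{k+1}/2$ roughly. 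Thus $\kappa(x,y)$ is the scale of $d(x,y)$ up to one level. The trivial bound $D_\sigma\le Q$ and symmetry are immediate, and $D_\sigma$ satisfies the triangle inequality by concatenating chains.

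\emph{Step 2: lower bound $D_\sigma(x,y)\ge c\,Q(x,y)$.} This is the main obstacle. Given a chain $x=x_0,\dots,x_m=y$, I would attach to each step $(x_{i-1},x_i)$ a vertex $w_i=(z_i,\kappa_i)$ realizing (up to $\eta$) $Q(x_{i-1},x_i)$. Consecutive vertices $w_i,w_{i+1}$ both have $x_i$ in their $2\alpha^{\kappa}$-balls. I would join them by a graph path that goes vertically up from the deeper one to the level of the shallower one, then takes one horizontal edge. Along the vertical part, \eqref{eq:pi-product} gives geometric growth of $\pi$ by a factor at least $(1-\eta)^{-1}$ per level going up. So the $\pi$-sum along the vertical part is at most $C_\eta\,\pi$ of the shallower endpoint, and the horizontal edge costs a comparable amount by \eqref{eq:pi-horizontal-new}. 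One must check that ancestors of a vertex whose ball contains $x_i$ stay within $8\alpha^j$ of $x_i$ at each level $j$; this follows from the parent condition and a geometric series with $\alpha<10^{-3}$. Concatenating the pieces yields a path from $w_1$ to $w_m$ with total $\pi$-cost at most $C\sum_i Q(x_{i-1},x_i)$. Lemma~\ref{lem:discrete-chain} then bounds this cost below by $c_\eta\Pi(w_1,w_m)$. It remains to compare $\Pi(w_1,w_m)$ with $Q(x,y)$. The ancestors of $w_1$ and $w_m$ at level $k(w_1,w_m)$ are neighbours, so their points are within $16\alpha^k$ of each other. Since $x,y$ lie near them, $d(x,y)\lesssim\alpha^{k}$, which gives $\kappa(x,y)\ge k-O(1)$. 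Comparing $\pi$ at that level to $Q(x,y)$ via the horizontal bounds and \eqref{eq:pi-product} over $O(1)$ levels gives $\Pi\gtrsim Q(x,y)$. Hence $D_\sigma(x,y)\asymp Q(x,y)>0$, and $D_\sigma$ is a metric.

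\emph{Step 3: quasisymmetry.} This follows from $D_\sigma(x,y)\asymp Q(x,y)\asymp\pi$ at level $\kappa(x,y)\approx\log_\alpha d(x,y)$. If $d(x,a)\le t\,d(x,b)$, then the level of $(x,a)$ is at least the level of $(x,b)$ minus $\log_{1/\alpha}t+O(1)$. The two relevant vertices are linked by ancestry and horizontal adjacency near $x$, so by \eqref{eq:pi-product} and \eqref{eq:pi-horizontal-new} the ratio of their $\pi$-values is at most $C\eta^{-O(1+\log^+t)}$ when $t$ is large. When $t$ is small, the ratio is at most $C(1-\eta)^{\log_{1/\alpha}(1/t)}$, which tends to $0$. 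This yields the required function $\eta$ (the distortion function, to be distinguished from the constant $\eta$).

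\emph{Step 4: the diameter bound \eqref{eq:filling-diameter}.} Take $y,y'\in B_d(x,\alpha^n/4)$. Then $D_\sigma(y,y')\le Q(y,y')$, and $\kappa(y,y')\ge n$ with witness the vertex $(z,n)$ nearest $x$, so $Q\lesssim\pi((z,n))=\prod_{j\le n}\rho(g(z,n)_j)$. By \eqref{eq:rho-bounds}, each factor satisfies $\rho\le\sup_{w\sim u}a(w)$. If $\tau_j(x)=\eta$, then $\sigma$ vanishes on all level-$j$ vertices within $30\alpha^j$ of $x$, so $a(w)=\eta$ on the two-step neighbourhood. This requires checking that the ancestor $u$ at level $j$ lies within $2\alpha^j$ of $x$ and that the two-step neighbourhood of a neighbour of $u$ reaches only $26\alpha^j$ further. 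Hence $\rho\le\tau_j(x)$ in all cases, and the vertex-comparison constant from Step 1 is absorbed into $C$.
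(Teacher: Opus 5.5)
Your proposal is correct and matches the paper's proof step for step. The paper likewise bounds $D_\sigma$ below by turning a chain into a graph path of $\pi$-cost $\lesssim\sum_i Q(x_{i-1},x_i)$ and applying Lemma~\ref{lem:discrete-chain}, derives quasisymmetry from the same level comparison \eqref{eq:Qratio}, and proves the diameter bound as in your Step~4 with $C=\eta^{-1}$. The only difference is minor: the paper anchors the path at deep-level vertices $u_0,u_m$ near $x,y$, which pins $k(u_0,u_m)$ within one level of $\kappa(x,y)$, while your anchoring at the realizing vertices $w_1,w_m$ gives only $k(w_1,w_m)\le\kappa(x,y)+1$; this still suffices because $\pi$ does not decrease along ancestors.
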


\begin{proof}
	\medskip
	\noindent{\itshape Claim 1. The function $D_\sigma$ is a metric. Moreover,
	there is a constant $C_1\ge1$, depending only on $\eta$, such that
	\begin{equation}\label{eq:DQ}
		C_1^{-1}Q(x,y)\le D_\sigma(x,y)\le Q(x,y)
		\qquad(x,y\in X,\ x\ne y).
	\end{equation}
	\par}
	\medskip

	We first prove the following elementary scale estimate:
	\begin{equation}\label{eq:k-distance}
		\alpha^{\kappa(x,y)+1}<d(x,y)<4\alpha^{\kappa(x,y)}.
	\end{equation}
	The upper bound follows from the definition of $\kappa(x,y)$. For the lower
	bound, put $\kappa=\kappa(x,y)$. If
	$d(x,y)\le\alpha^{\kappa+1}$, choose $z'\in A_{\kappa+1}$ with
	$d(x,z')<\alpha^{\kappa+1}$. Then
	$x,y\in B(z',2\alpha^{\kappa+1})$, contradicting the maximality of
	$\kappa$.
	
	The upper bound in \eqref{eq:DQ} follows by using the one-edge chain
	$(x,y)$ in \eqref{eq:Dchain}. For the lower bound, fix a finite chain
	$x=x_0,\ldots,x_m=y$ with distinct consecutive points, and put
	$\kappa_i=\kappa(x_{i-1},x_i)$. Choose
	\[
	L>\max\{\kappa(x,y),\kappa_1,\ldots,\kappa_m\}.
	\]
	For each $i$, choose $\xi_i\in A_L$ with
	$d(x_i,\xi_i)<\alpha^L$, and set $u_i=(\xi_i,L)$. Also choose a vertex
	$z_i\in V_{\kappa_i}$ occurring in \eqref{eq:Qxy}.
	Let $a_i^-=g(u_{i-1})_{\kappa_i}$ and
	$a_i^+=g(u_i)_{\kappa_i}$. Since
	$z_i$ and $u_{i-1}$ are both close to $x_{i-1}$,
	\[
	\begin{aligned}
		d(a_i^-,z_i)
		&<\sum_{j=\kappa_i}^{L-1}\alpha^j+\alpha^L+2\alpha^{\kappa_i}\\
		&<\left(\frac1{1-\alpha}+\alpha+2\right)\alpha^{\kappa_i}
		<4\alpha^{\kappa_i}.
	\end{aligned}
	\]
	The same estimate holds for $a_i^+$. Thus
	$a_i^-\sim z_i\sim a_i^+$.
	
	Construct a graph path $P_i$ from $u_{i-1}$ to $u_i$ by following the
	parent chains to $a_i^-$ and $a_i^+$ and joining these ancestors through
	$z_i$. Equations \eqref{eq:pi-horizontal-new} and
	\eqref{eq:pi-product} give
	\[
	\sum_{v\text{ on }P_i}\pi(v)
	\le C_2\pi(z_i),
	\qquad C_2=1+2\eta^{-2}.
	\]
	Indeed, \eqref{eq:pi-product} shows that the weight decreases by a factor
	of at most $1-\eta$ at each step along either vertical leg. Hence its total
	weight is at most
	$\pi(a_i^\pm)\sum_{j=0}^\infty(1-\eta)^j
	=\eta^{-1}\pi(a_i^\pm)\le\eta^{-2}\pi(z_i)$.
	Since $\pi(z_i)\le Q(x_{i-1},x_i)$, concatenating the paths
	$P_i$ and applying Lemma~\ref{lem:discrete-chain} yields
	\begin{equation}\label{eq:chain-Pi-lower}
		\sum_{i=1}^mQ(x_{i-1},x_i)
		\ge C_2^{-1}c_\eta\Pi(u_0,u_m).
	\end{equation}
	
	It remains to compare $\Pi(u_0,u_m)$ with $Q(x,y)$. Put
	$\kappa=\kappa(x,y)$ and choose a vertex $z\in V_\kappa$ occurring in
	\eqref{eq:Qxy} such that
	$\pi(z)>Q(x,y)/2$. Repeating the ancestor-center estimate used for
	$a_i^-$ with $\kappa_i$ replaced by $\kappa$ shows that
	$g(u_0)_\kappa\sim z\sim g(u_m)_\kappa$. Hence, if
	$h=k(u_0,u_m)$, then
	\begin{equation}\label{eq:h-lower}
		h\ge\max\{\kappa-1,0\},
	\end{equation}
	because, when $\kappa\ge1$, applying \eqref{eq:parent-short-chain} to
	$g(u_0)_\kappa\sim z\sim g(u_m)_\kappa$ gives
	$g(u_0)_{\kappa-1}\sim g(u_m)_{\kappa-1}$; when $\kappa=0$, the bound is
	immediate.
	
	Conversely, the level-$h$ ancestors of $u_0$ and $u_m$ are horizontal
	neighbors and hence less than $8\alpha^h$ apart, while $x$ and $y$ are each within
	$\alpha^h/(1-\alpha)$ of the corresponding ancestor; hence the triangle
	inequality gives
	\[
	\begin{aligned}
		d(x,y)
		&<\left(8+\frac2{1-\alpha}\right)\alpha^h
		<12\alpha^h.
	\end{aligned}
	\]
	Together with \eqref{eq:k-distance}, this forces
	\begin{equation}\label{eq:h-upper}
		h\le\kappa+1.
	\end{equation}
	Indeed, if $h\ge\kappa+2$, then
	$12\alpha^h\le12\alpha^{\kappa+2}<\alpha^{\kappa+1}$ because
	$\alpha<10^{-3}$, a contradiction. From
	\eqref{eq:h-lower}--\eqref{eq:h-upper} and \eqref{eq:pi-product},
	\[
	\pi(g(u_0)_h)\ge\eta\pi(g(u_0)_\kappa).
	\]
	Since $g(u_0)_\kappa\sim z$, it follows that
	\[
	\Pi(u_0,u_m)
	\ge\pi(g(u_0)_h)
	\ge\eta^2\pi(z)
	>\frac{\eta^2}{2}Q(x,y).
	\]
	Combining this with \eqref{eq:chain-Pi-lower} gives
	\[
	\sum_{i=1}^mQ(x_{i-1},x_i)
	\ge\frac{c_\eta\eta^2}{2C_2}Q(x,y).
	\]
	Taking the infimum over all chains proves \eqref{eq:DQ} with
	$C_1=2C_2/(c_\eta\eta^2)$.
	In particular, $D_\sigma(x,y)>0$ when $x\ne y$.

Symmetry follows from the symmetry of $Q$, and concatenating chains gives
the triangle inequality.
	Thus $D_\sigma$ is a metric, and Claim~1 follows.
	\nobreak\hspace{0.5em}$\square$\par\medskip
	\noindent{\itshape Claim 2. The identity
	$\id:(X,d)\to(X,D_\sigma)$ is quasisymmetric.\par}
	\medskip
	
	Let $x,a,b$ be distinct, and write
	$\kappa_a=\kappa(x,a)$ and $\kappa_b=\kappa(x,b)$.
	Choose a vertex $z\in V_{\kappa_a}$ occurring in the definition of $Q(x,a)$
	with $\pi(z)>Q(x,a)/2$, and choose any vertex
	$z'\in V_{\kappa_b}$ occurring
	in the definition of $Q(x,b)$. Then
	\begin{equation}\label{eq:Q-quotient}
		\frac{Q(x,a)}{Q(x,b)}<2\frac{\pi(z)}{\pi(z')}.
	\end{equation}
	If $\kappa_a\ge\kappa_b$, then
	\[
	\begin{aligned}
		d(g(z)_{\kappa_b},z')
		&\le d(g(z)_{\kappa_b},z)+d(z,x)+d(x,z')\\
		&<\sum_{j=\kappa_b}^{\kappa_a-1}\alpha^j
		+2\alpha^{\kappa_a}+2\alpha^{\kappa_b}\\
		&<\left(\frac1{1-\alpha}+4\right)\alpha^{\kappa_b}
		<8\alpha^{\kappa_b}.
	\end{aligned}
	\]
	Thus $g(z)_{\kappa_b}\sim z'$. If $\kappa_a<\kappa_b$, the same
	calculation gives $z\sim g(z')_{\kappa_a}$. Equations
	\eqref{eq:pi-horizontal-new}, \eqref{eq:Q-quotient}, and
	\eqref{eq:pi-product} now imply
	\begin{equation}\label{eq:Qratio}
		\frac{Q(x,a)}{Q(x,b)}
		\le2\eta^{-1}
		\begin{cases}
			(1-\eta)^{\kappa_a-\kappa_b},&\kappa_a\ge\kappa_b,\\
			\eta^{-(\kappa_b-\kappa_a)},&\kappa_a<\kappa_b.
		\end{cases}
	\end{equation}
	
	Suppose that $d(x,a)\le t\,d(x,b)$, and set
	\[
	\beta=\frac{\log(1/(1-\eta))}{\log(1/\alpha)},
	\qquad
	\gamma=\frac{\log(1/\eta)}{\log(1/\alpha)}.
	\]
	By \eqref{eq:k-distance},
	\[
	\alpha^{\kappa_a-\kappa_b}<\frac{4t}{\alpha}.
	\]
	Consequently,
	\[
	(1-\eta)^{\kappa_a-\kappa_b}
	\le\left(\frac4\alpha\right)^\beta t^\beta
	\quad\text{if }\kappa_a\ge\kappa_b,
	\]
	whereas
	\[
	\eta^{-(\kappa_b-\kappa_a)}
	\le\left(\frac4\alpha\right)^\gamma t^\gamma
	\quad\text{if }\kappa_a<\kappa_b.
	\]
	Using Claim~1 and \eqref{eq:Qratio}, we obtain
	\[
	\frac{D_\sigma(x,a)}{D_\sigma(x,b)}
	\le C_3(t^\beta+t^\gamma),
	\]
	where
	\[
	C_3=2C_1\eta^{-1}
	\max\left\{
	\left(\frac4\alpha\right)^\beta,
	\left(\frac4\alpha\right)^\gamma
	\right\}.
	\]
	Since $t\mapsto C_3(t^\beta+t^\gamma)$ is a homeomorphism of
	$[0,\infty)$ onto itself, Claim~2 follows.
	\nobreak\hspace{0.5em}$\square$\par\medskip
	\noindent{\itshape Claim 3. For every $x\in X$ and $n\ge1$,
	\eqref{eq:filling-diameter} holds.\par}
	\medskip
	
	Choose $u=(z,n)\in V_n$ with $d(x,z)<\alpha^n$. If
	$y,y'\in B_d(x,\alpha^n/4)$ are distinct, then
	\[
	d(y,z)<\tfrac54\alpha^n<2\alpha^n,
	\]
	and the same holds for $y'$. Hence $\kappa(y,y')\ge n$. Put
	$\kappa=\kappa(y,y')$, and let $\widetilde u=(w,\kappa)$ be any vertex occurring in
	the definition of $Q(y,y')$. Then
	\[
	\begin{aligned}
		d(g(\widetilde u)_n,u)
		&\le d(g(\widetilde u)_n,w)+d(w,y)+d(y,x)+d(x,z)\\
		&<\sum_{\ell=n}^{\kappa-1}\alpha^\ell
		+2\alpha^\kappa+\tfrac14\alpha^n+\alpha^n\\
		&<\left(\frac1{1-\alpha}+\frac{13}{4}\right)\alpha^n
		<8\alpha^n.
	\end{aligned}
	\]
	Thus $g(\widetilde u)_n\sim u$. By
	\eqref{eq:pi-horizontal-new} and \eqref{eq:pi-product},
	\[
	\pi(\widetilde u)
	\le(1-\eta)^{\kappa-n}\pi(g(\widetilde u)_n)
	\le\eta^{-1}\pi(u).
	\]
	Taking the supremum over all such $\widetilde u$ and using the upper
	bound in \eqref{eq:DQ}, we obtain
	\begin{equation}\label{eq:ball-pi}
		\diam_{D_\sigma}B_d(x,\tfrac14\alpha^n)
		\le\eta^{-1}\pi(u).
	\end{equation}
	
	For $1\le j\le n$, the parent displacement estimates give
	\[
	d(x,g(u)_j)
	<\sum_{\ell=j}^n\alpha^\ell
	<\frac{\alpha^j}{1-\alpha}
	<2\alpha^j.
	\]
	Suppose the first case of \eqref{eq:tau} holds at level $j$. If
	$g(u)_j\sim v\sim v_1\sim v_2$, then
	\[
	d(x,v_2)<2\alpha^j+3\cdot8\alpha^j
	=26\alpha^j<30\alpha^j.
	\]
	Therefore $\sigma(v_2)=0$. It follows from \eqref{eq:mweight} that
	$a(v)=\eta$ for every $v\sim g(u)_j$, and hence
	\[
	\rho(g(u)_j)=\eta=\tau_j(x)
	\]
	by \eqref{eq:rho-bounds}. In the second case of \eqref{eq:tau}, the
	same bounds give
	$\rho(g(u)_j)\le1-\eta=\tau_j(x)$. Thus
	\[
	\pi(u)=\prod_{j=1}^n\rho(g(u)_j)
	\le\prod_{j=1}^n\tau_j(x).
	\]
	Combining this with \eqref{eq:ball-pi} proves Claim~3 with
	$C=\eta^{-1}$.
	\nobreak\hspace{0.5em}$\square$\par\medskip
	The proposition follows from Claims~1--3.
\end{proof}

\section{Proof of the dimension-reduction theorem}\label{sec:reduction-proof}

In this section we prove Theorem~\ref{thm:reduction}.
Subsection~\ref{subsec:measure-tools} relates Hausdorff dimension to local
dimension and constructs random partitions with an explicit bound on the
probability that a ball is not contained in the partition cell containing its
center.  Subsection~\ref{subsec:bounded-complete} uses these partitions to
choose the filling weights when the metric space $(X,d)$ is bounded and complete.
Subsection~\ref{subsec:general-case} then removes the assumptions of
boundedness and completeness.

\subsection{Local dimension and random partitions}
\label{subsec:measure-tools}

This subsection proves two measure-theoretic lemmas used later, one concerning
local dimension and the other concerning random partitions.

For a finite measure $\mu$ on $(X,d)$, the \emph{lower and upper local
dimensions} of $\mu$ at $x\in X$ are defined by
\[
\underline{\dim}_{\mathrm{loc}}\mu(x)
=\liminf_{r\downarrow0}
\frac{\log\mu(B_d(x,r))}{\log r},
\qquad
\overline{\dim}_{\mathrm{loc}}\mu(x)
=\limsup_{r\downarrow0}
\frac{\log\mu(B_d(x,r))}{\log r}.
\]
If these two quantities agree, their common value is called the \emph{local
dimension} of $\mu$ at $x$.  These definitions are needed only on
$\spt\mu$, which has full measure; their values off the support are
irrelevant.  We use the following relation
between the lower local dimension and the Hausdorff dimension of a measure.
\begin{lemma}\label{lem:localdim}
If $\mu$ is a finite measure on a separable metric space $(X,d)$,
then
\begin{equation}\label{eq:localdim}
	\dimH^d\mu
	 =\esssup_{x\in X}\underline{\dim}_{\mathrm{loc}}\mu(x),
\end{equation}
where the essential supremum is taken with respect to $\mu$.
\end{lemma}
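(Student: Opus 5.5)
We prove the two inequalities separately. Write $s_0$ for the essential supremum on the right-hand side of \eqref{eq:localdim}. Both directions come from mass distribution (Frostman-type) arguments on a single set, and the only delicate point is that a general separable metric space has no Besicovitch or Vitali covering theorem. We therefore aim for arguments that use only the elementary $5r$-covering lemma, or no covering lemma at all.

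\emph{Lower bound $\dimH^d\mu\ge s_0$.} Let $s<s_0$. Then the set $F=\{x:\underline{\dim}_{\mathrm{loc}}\mu(x)>s\}$ has positive measure; it is Borel because $r\mapsto\mu(B(x,r))$ is monotone, so the liminf may be taken along rational radii. For $x\in F$ there is $r_x>0$ with $\mu(B(x,r))\le r^{s}$ for all $r<r_x$. Choose $\delta>0$ so that $F_\delta=\{x\in F:r_x\ge\delta\}$ has $\mu(F_\delta)>0$. Now let $E$ be any Borel set of full measure and cover $E\cap F_\delta$ by sets $U_i$ with $\diam U_i<\delta/2$, each meeting $F_\delta$ at some point $x_i$. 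Then $U_i\subset B(x_i,2\diam U_i)$, so $\mu(U_i)\le 2^s(\diam U_i)^s$. Summing over $i$ gives $\mathcal H^s(E)\ge 2^{-s}\mu(F_\delta)>0$, hence $\dimH E\ge s$. This holds for every such $E$ and every $s<s_0$, so $\dimH^d\mu\ge s_0$. This direction needs no covering lemma.

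\emph{Upper bound $\dimH^d\mu\le s_0$.} Assume $s_0<\infty$ and let $s>s_0$. The set
\[
G=\{x\in\spt\mu:\underline{\dim}_{\mathrm{loc}}\mu(x)<s\}
\]
has full measure. For each $x\in G$ there are arbitrarily small $r$ with $\mu(B(x,r))\ge r^{s}$. We show $\mathcal H^{s}(G)<\infty$, which gives $\dimH G\le s$.

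Fix $\delta>0$ and consider the family of balls $B(x,r)$ with $x\in G$, $r<\delta$ and $\mu(B(x,r))\ge r^s$. By the $5r$-covering lemma, valid in any metric space for families of bounded radii, there is a disjoint subfamily $\{B_i\}$ whose $5$-fold enlargements cover $G$. Because $X$ is separable and the $B_i$ are disjoint open sets, the subfamily is countable. Each enlargement has diameter at most $10r_i$, so
\[
\mathcal H^s_{10\delta}(G)\le\sum_i(10r_i)^s\le10^s\sum_i\mu(B_i)\le10^s\mu(X).
\]
Letting $\delta\to0$ gives $\mathcal H^s(G)\le10^s\mu(X)<\infty$.

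The set $G$ is Borel, since the liminf can be computed along rational radii and $\spt\mu$ is closed. As $\mu(X\setminus G)=0$, we get $\dimH^d\mu\le s$. If instead $s_0=\infty$, the inequality $\dimH^d\mu\le\infty$ is trivial.

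The main obstacle is exactly this covering step in a nondoubling space. It is resolved because the $5r$ lemma requires no doubling and produces countable families thanks to separability. The only remaining points are routine measurability checks for $F$, $F_\delta$ and $G$, all of which follow from monotonicity in $r$.
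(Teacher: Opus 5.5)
Your proof is correct and is essentially the paper's argument written out in full: the $5r$-covering lemma applied to balls with $\mu(B(x,r))\ge r^s$ for the upper bound (the disjoint subfamily is countable by separability), and a mass-distribution estimate for the lower bound, on a positive-measure set where $\mu(B(x,r))\le r^s$ holds uniformly for small $r$. One slip is cosmetic: when $\diam U_i=0$ the inclusion $U_i\subset B(x_i,2\diam U_i)$ fails for open balls, so use $U_i\subset B(x_i,r)$ for $r\in(\diam U_i,\delta)$ and let $r\downarrow\diam U_i$ instead (legitimate since $s>0$), which also avoids evaluating $\mu$ on possibly non-Borel sets $U_i$.
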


\begin{proof}
	For measures on Euclidean spaces, \eqref{eq:localdim} is standard; see
	\cite[Proposition~10.3]{F97}.  The same proof works in a separable metric
	space.  For $s$ above the right-hand side of \eqref{eq:localdim}, applying
	the metric $5r$-covering lemma \cite[Theorem~1.2]{H01} to balls satisfying
	$\mu(B(x,r))\ge r^s$ gives the upper bound.  For $s$ below the right-hand
	side, the definition of lower local dimension gives a set of positive
	measure on which $\mu(B(x,r))\le r^s$ uniformly for all sufficiently small
	$r$; the definition of Hausdorff measure then gives the lower bound.
\end{proof}

The following lemma constructs a random Borel partition of a separable metric
space equipped with a probability measure that has full support. A \emph{Borel partition} is a pairwise disjoint collection of
Borel sets whose union is the whole space; a \emph{random Borel partition}
is a partition-valued random variable.  We write $\mathcal P(x)$ for the cell
containing $x$ and $\mathbb P$ for probability with respect to this random
choice.

\begin{lemma}\label{lem:partition}
Let $(X,d)$ be a separable metric space and let $\mu$ be a Borel probability
measure with full support. For every $\Delta>0$ there is a random Borel partition
$\mathcal P$ of $X$ whose cells have diameter at most $\Delta$ and such
that, for $0<t\le\Delta/8$ and $x\in X$,
\begin{equation}\label{eq:partition-bound}
 \mathbb P[B_d(x,t)\not\subset\mathcal P(x)]
 \le \frac{8t}{\Delta}
       \log\frac{\mu(B_d(x,\Delta))}
                    {\mu(B_d(x,\Delta/8))}.
\end{equation}
\end{lemma}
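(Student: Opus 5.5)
This is the Calinescu--Karloff--Rabani / Fakcharoenphol--Rao--Talwar random partition, made measure-weighted in the style of Mendel--Naor. Concretely, I would use a random ordering of centers driven by the measure $\mu$ together with a random radius.

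\textbf{Construction.} Let $\xi_1,\xi_2,\ldots$ be i.i.d.\ points of $X$ with law $\mu$. Independently, let $R$ be uniform on $[\Delta/4,\Delta/2]$. For $x\in X$, let $i(x)$ be the least $i$ with $d(x,\xi_i)<R$, and set $\mathcal P(x)=\{y:i(y)=i(x)\}$.

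\textbf{Well-definedness and Borel measurability.} Since $\mu$ has full support, $\mu(B(x,\Delta/4))>0$ for every $x$. Hence, almost surely, $i(x)<\infty$ for all $x$ in a countable dense set. I would then upgrade this to all $x$ simultaneously: for a countable dense set $\{q_k\}$, almost surely every ball $B(q_k,\Delta/8)$ contains some $\xi_i$. Every $x$ is within $\Delta/8$ of some $q_k$, so some $\xi_i$ lies within $\Delta/4\le R$ of $x$. On the null event where this fails, I would redefine the partition to be trivial into singletons of a fixed Borel partition of small diameter; that is harmless for probabilities. Cells are differences of open balls, hence Borel. Each cell lies in $B(\xi_i,R)$, so its diameter is at most $2R\le\Delta$.

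\textbf{Probability estimate.} Fix $x$ and $0<t\le\Delta/8$, and write $B=B(x,t)$. The ball $B$ is cut only if the first center $\xi_i$ with $d(x,\xi_i)<R+t$ (the first one whose ball touches $B$) satisfies $R-t\le d(x,\xi_i)<R+t$. Otherwise that center's ball either misses $B$ or contains all of it, while all earlier centers miss $B$.

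I would condition on $R=r$. Write $F(s)=\mu(B(x,s))$. The first $\xi_i$ landing in $B(x,r+t)$ has conditional law $\mu|_{B(x,r+t)}/F(r+t)$. The bad event therefore has probability at most
\[
\frac{F(r+t)-F(r-t)}{F(r+t)}\le\log\frac{F(r+t)}{F(r-t)},
\]
using $1-1/u\le\log u$. Averaging over $r\in[\Delta/4,\Delta/2]$ with density $4/\Delta$ gives
\[
\frac4\Delta\int_{\Delta/4}^{\Delta/2}\bigl(\log F(r+t)-\log F(r-t)\bigr)\,dr
=\frac4\Delta\left(\int_{\Delta/2-t}^{\Delta/2+t}\log F-\int_{\Delta/4-t}^{\Delta/4+t}\log F\right).
\]
By monotonicity of $F$, this is at most $\frac{4}{\Delta}\cdot 2t\,\bigl(\log F(\Delta/2+t)-\log F(\Delta/4-t)\bigr)$. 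Since $t\le\Delta/8$, we have $\Delta/2+t\le\Delta$ and $\Delta/4-t\ge\Delta/8$. This yields
\[
\frac{8t}{\Delta}\log\frac{\mu(B(x,\Delta))}{\mu(B(x,\Delta/8))}.
\]

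\textbf{Main obstacle.} The estimate itself is a routine telescoping integral. The delicate part is the measure-theoretic bookkeeping. One issue is making the random partition a genuine random variable with Borel cells, as handled above. The other is the conditioning step: it uses the fact that the index of the first center hitting $B(x,r+t)$ is a.s.\ finite, and that the corresponding $\xi_i$ is distributed as normalized $\mu$ on that ball, which holds by independence of the $\xi_i$.
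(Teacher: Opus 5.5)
Your proposal is correct and follows the paper's proof essentially step for step. Both use i.i.d.\ $\mu$-distributed centers with an independent uniform radius $R\in[\Delta/4,\Delta/2]$ and first-hit cells. Both use the criterion that $B(x,t)$ is uncut when the first center within $R+t$ of $x$ actually lies within $R-t$, then the bound $1-F(R-t)/F(R+t)\le\log\bigl(F(R+t)/F(R-t)\bigr)$, and the same monotone telescoping integral over $R$.

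The differences are cosmetic. You identify the conditional law of the first hit directly, where the paper sums the geometric series. You also redefine the partition on the null event where the centers fail to be dense, while the paper simply fixes a dense realization.
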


\begin{proof}
Choose $R$ uniformly from $[\Delta/4,\Delta/2]$.  Independently of
$R$, let
$Z_1,Z_2,\ldots$ be independent samples with law $\mu$. 

We first verify that the sequence $\{Z_i:i\ge1\}$ is almost surely
dense in $X$. Let $\{U_j:j\ge1\}$ be a countable base consisting of
nonempty open sets. Since $\mu$ has full support,
$
\mu(U_j)>0$ for every $j\ge1$. By the independence of the samples,
\[
\begin{aligned}
	\mathbb P\bigl[Z_i\notin U_j\text{ for every }i\ge1\bigr]
	&=\lim_{N\to\infty}
	\mathbb P\bigl[Z_1,\ldots,Z_N\notin U_j\bigr]\\
	&=\lim_{N\to\infty}(1-\mu(U_j))^N=0.
\end{aligned}
\]
It follows that, with probability one, every $U_j$ contains at least one
sample $Z_i$.  Thus $\{Z_i:i\ge1\}$ is dense in $X$ almost surely.

Fix such a dense sequence $\{Z_i:i\ge1\}$. For $x\in X$,
define
$
I_R(x)=\min\{i\ge1:d(x,Z_i)<R\}.
$ 
For each $i\ge1$, define
$
P_i=\{x\in X:I_R(x)=i\}.
$
Equivalently,
\[
P_i
=B(Z_i,R)\setminus\bigcup_{\ell=1}^{i-1}B(Z_\ell,R).
\]
Thus every $P_i$ is Borel. Moreover, the sets $P_i$ are pairwise
disjoint and their union is $X$.
Define
\[
\mathcal P=\{P_i:i\ge1,\ P_i\ne\varnothing\}.
\]
Consequently, $\mathcal P$ is a Borel partition of $X$ whose cells
have diameter at most $\Delta$.

For $x\in X$, write
$
\mathcal P(x)=P_{I_R(x)}
$
for the unique cell containing $x$.  Fix $x$ and $t\le\Delta/8$, put
$m(r)=\mu(B(x,r))$, and let $j=I_{R+t}(x)$.  Suppose that
$Z_j\in B(x,R-t)$.  For every $y\in B(x,t)$,
\[
d(y,Z_j)\le d(y,x)+d(x,Z_j)<t+(R-t)=R.
\]
Thus $y\in B(Z_j,R)$. On the other hand, if $i<j$, then
$Z_i\notin B(x,R+t)$. Therefore
\[
d(y,Z_i)\ge d(x,Z_i)-d(x,y)>R+t-t=R,
\]
so $y\notin B(Z_i,R)$. Hence $I_R(y)=j$ for every $y\in B(x,t)$.
In particular, all points of $B(x,t)$ belong to the same partition
cell as $x$, and 
$
B(x,t)\subset\mathcal P(x).
$ 
Note that 
\[
\begin{aligned}
\mathbb{P}[Z_j\in B(x,R-t)\mid R]
&=\sum_{i=1}^\infty
\mathbb{P}\bigl[Z_1,\ldots,Z_{i-1}\notin B(x,R+t),
Z_i\in B(x,R-t)\mid R\bigr]\\
&=\sum_{i=1}^\infty (1-m(R+t))^{i-1}m(R-t)
=\frac{m(R-t)}{m(R+t)}.
\end{aligned}
\]
Therefore
\[
 \mathbb P[B(x,t)\not\subset\mathcal P(x)\mid R]
 \le1-\frac{m(R-t)}{m(R+t)}
 \le\log\frac{m(R+t)}{m(R-t)}.
\]  
If
$F(r)=\log m(r)$, monotonicity gives
\begin{align*}
 \int_{\Delta/4}^{\Delta/2}[F(r+t)-F(r-t)]\,dr
 &\le 2t[F(\Delta/2+t)-F(\Delta/4-t)]\\
 &\le 2t\log\frac{m(\Delta)}{m(\Delta/8)}.
\end{align*}
Multiplication by the density $4/\Delta$ of $R$ proves
\eqref{eq:partition-bound}.
\end{proof}

\subsection{The bounded complete case}
\label{subsec:bounded-complete}

In this subsection we prove Theorem~\ref{thm:reduction} when $(X,d)$ is
bounded and complete.  The main point is to arrange that most scales receive
the smaller factor $\eta$, while the exceptional scales have sufficiently
small upper density.

\begin{proof}[Proof of Theorem~\ref{thm:reduction} in the bounded complete case]
Assume first that $X$ is bounded and complete, and rescale so that
$\diam X<1$.  Set $s_\mu=\dimH^d\mu<\infty$. Fix $p>0$, $\varepsilon>0$, and a
constant $K>40$.  Choose a large integer $M$ such that,
with $\alpha=M^{-1}<\alpha_0$, where $\alpha_0$ is the constant fixed in
Subsection~\ref{subsec:nets},
\begin{equation}\label{eq:delta}
 M\ge32K,
 \qquad
 \delta_M:=\frac{32K(s_\mu+\varepsilon)\log M}{M}<\frac12.
\end{equation}

At every level $n$, independently apply Lemma~\ref{lem:partition} with
\begin{equation}\label{eq:Delta-n}
 \Delta_n=\frac{\alpha^{n-1}}4
\end{equation}
and denote the partition by $\mathcal P_n$.  For each vertex $(z,n)\in V_n$, define
\begin{equation}\label{eq:sigma}
 \sigma(z,n)=\mathbf 1_{\{B(z,9\alpha^n) \not\subset\mathcal P_n(z)\}}.
\end{equation}
This assignment is admissible. Indeed, suppose that
$u_i=(x_i,n)\in V_n$ satisfies \eqref{eq:annular-endpoints} and that
$\sigma(u_i)=0$ for every $0\le i\le N$. Since
$
d(x_{i-1},x_i)<8\alpha^n<9\alpha^n,
$
we have
\[
x_i\in B(x_{i-1},9\alpha^n)
\subset\mathcal P_n(x_{i-1}),
\]
and hence
$
\mathcal P_n(x_i)=\mathcal P_n(x_{i-1}).
$
Thus all $x_i$ belong to one cell, so
\[
d(x_0,x_N)\le\Delta_n=\frac{\alpha^{n-1}}4.
\]
On the other hand, \eqref{eq:annular-endpoints} gives
\[
d(x_0,x_N)>
\alpha^{n-1}-8\alpha^n
=\left(1-\frac8M\right)\alpha^{n-1}
>\frac{\alpha^{n-1}}4
\]
The last inequality follows from \eqref{eq:delta}.
This is a contradiction. 

As in the proof of Lemma~\ref{lem:partition}, let $R_n$ and
$Z_{n,1},Z_{n,2},\ldots$ be the random radius and samples used to construct
$\mathcal P_n$.  For $x\in X$, let $I_n(x)$ be the first $i$ such
that $Z_{n,i}\in B(x,R_n+K\alpha^n)$ and set
\[
 J_n(x)=\mathbf 1_{\{d(x,Z_{n,I_n(x)}) \ge R_n-K\alpha^n\}}.
\]
The events $\{I_n(x)=i\}$ are Borel in the sample variables and $x$, so
$J_n(x)$ is jointly measurable.  It is related to the factor $\tau_n(x)$ in
\eqref{eq:tau} through the implication
\begin{equation}\label{eq:jn}
J_n(x)=0\quad\Longrightarrow\quad\tau_n(x)=\eta.
\end{equation}
Indeed, if $J_n(x)=0$, then the proof of
Lemma~\ref{lem:partition} gives
$
B(x,K\alpha^n)\subset\mathcal P_n(x).
$
If $(z,n)\in V_n$ satisfies $d(x,z)\le 30\alpha^n$, then, since
$K>40$,
\[
B(z,9\alpha^n)
\subset B(x,39\alpha^n)
\subset B(x,K\alpha^n)
\subset\mathcal P_n(x).
\]
In particular, $z\in\mathcal P_n(x)$, so
$\mathcal P_n(z)=\mathcal P_n(x)$ and
$
B(z,9\alpha^n)\subset\mathcal P_n(z).
$
It follows from \eqref{eq:sigma} that
\[
\sigma(z,n)=0
\qquad\text{whenever }d(x,z)\le 30\alpha^n.
\]
Hence \eqref{eq:jn} holds, and therefore
\begin{equation}\label{eq:tau-count}
 \#\{1\le j\le n:\tau_j(x)=\eta\}
 \ge n-\sum_{j=1}^nJ_j(x).
\end{equation}

Writing $\E$ for expectation over all the random partitions, note that
$M\ge32K$ gives $K\alpha^n\le\Delta_n/8$.  The calculation in the proof of
Lemma~\ref{lem:partition}, with $t=K\alpha^n$, gives
\begin{equation}\label{eq:pn}
 p_n(x):=\E J_n(x)
 \le\frac{32K}{M}
    \log\frac{\mu(B(x,\Delta_n))}
                  {\mu(B(x,\Delta_n/8))}.
\end{equation}
By \eqref{eq:Delta-n},
$\Delta_{n+1}=\Delta_n/M\le\Delta_n/8$.  Summing \eqref{eq:pn} and
telescoping therefore give
\begin{equation}\label{eq:telescope}
 \sum_{j=1}^n p_j(x)
 \le\frac{32K}{M}
    \log\frac{\mu(B(x,\Delta_1))}
                  {\mu(B(x,\Delta_{n+1}))}.
\end{equation}

For fixed $x$, the variables $J_n(x)$ are independent and bounded.
Writing $\operatorname{Var}$ for variance, we have
$\sum_n\operatorname{Var}(J_n(x))/n^2<\infty$, so Kolmogorov's
strong law gives
\[
 \frac1n\sum_{j=1}^n[J_j(x)-p_j(x)]\longrightarrow0
\]
with probability one. By joint measurability and Fubini's theorem, there is a
deterministic choice of all partitions for which
\begin{equation}\label{eq:slln}
 \frac1n\sum_{j=1}^n[J_j(x)-p_j(x)]\longrightarrow0
 \quad\text{for $\mu$-almost every }x.
\end{equation}

Fix such an $x$ with $\underline{\dim}_{\mathrm{loc}}\mu(x)\le s_\mu$, as holds almost
everywhere by Lemma~\ref{lem:localdim}.
There are arbitrarily small radii $r$ for which
$\mu(B(x,r))\ge r^{s_\mu+\varepsilon}$.  For each such $r$, choose $n$ so that
$\tfrac14\alpha^{n+1}<r\le\tfrac14\alpha^n$.  Hence there are arbitrarily large $n$ such
that
\begin{equation}\label{eq:masssubsequence}
 \mu(B(x,\tfrac14\alpha^n))
 \ge \alpha^{s_\mu+\varepsilon}(\tfrac14\alpha^n)^{s_\mu+\varepsilon}.
\end{equation}
Since $\tfrac14\alpha^n=\Delta_{n+1}$, equations
\eqref{eq:telescope} and \eqref{eq:slln} give, along this subsequence,
with $o(1)$ denoting a quantity that tends to zero,
\begin{equation}\label{eq:badfraction}
 \frac1n\sum_{j=1}^nJ_j(x)\le\delta_M+o(1).
\end{equation}

Apply Proposition~\ref{prop:filling}.  By \eqref{eq:tau-count} and
\eqref{eq:badfraction}, at least $(1-\delta_M-o(1))n$ scales contribute the
factor $\eta$, and every remaining scale contributes at most $1-\eta<1$.
Proposition~\ref{prop:filling} gives
\begin{equation}\label{eq:ball-inclusion}
 B_d(x,\tfrac14\alpha^n)
 \subset B_{D_\sigma}\!\left(x,
 2C\eta^{(1-\delta_M-o(1))n}\right)
\end{equation}
along the same subsequence.  Write
$\underline{\dim}_{\mathrm{loc}}^{D_\sigma}\mu$ for the lower local dimension computed using
$D_\sigma$-balls.  Combining
\eqref{eq:masssubsequence} and \eqref{eq:ball-inclusion} yields
\begin{equation}\label{eq:newlocaldim}
 \underline{\dim}_{\mathrm{loc}}^{D_\sigma}\mu(x)
 \le\frac{(s_\mu+\varepsilon)\log M}
          {(1-\delta_M)\log(1/\eta)}
 \quad\text{for $\mu$-almost every }x.
\end{equation}
Lemma~\ref{lem:localdim}, applied to \eqref{eq:newlocaldim}, gives the same
upper bound for
$\dimH^{D_\sigma}\mu$.  After $M$ has been fixed, $\eta$ may be chosen
arbitrarily small, so the right side is less than the prescribed $p$.
\end{proof}

\subsection{The general case}
\label{subsec:general-case}

The preceding subsection proves the theorem for bounded complete
spaces. We now pass first to complete unbounded spaces by
\emph{sphericalization}, which adds one point at infinity, and then to
incomplete spaces by taking a completion.

For positive quantities $A$ and $B$, we write
$A\asymp B$ if there is a constant $C\ge1$, independent of the relevant
variables, such that $C^{-1}B\le A\le CB$.
Two metrics $d_1$ and $d_2$ are \emph{bi-Lipschitz equivalent} on a set $E$
if $d_1(x,y)\asymp d_2(x,y)$ for all $x,y\in E$.

\begin{lemma}\label{lem:sphericalization}
Let $(X,d)$ be complete, unbounded, and separable, and fix $o\in X$.
There is a bounded complete metric $\widehat d$ on
$\widehat X=X\cup\{\infty\}$ satisfying
\begin{align}
 \widehat d(x,y)&\asymp
 \frac{d(x,y)}{(1+d(x,o))(1+d(y,o))},\label{eq:spherical-xy}\\
 \widehat d(x,\infty)&\asymp\frac1{1+d(x,o)}.
 \label{eq:spherical-inf}
\end{align}
Suppose $\widehat D$ is another metric on $\widehat X$ for which
$\id:(\widehat X,\widehat d)\to(\widehat X,\widehat D)$ is
quasisymmetric.  Then there exists a metric $D$ on $X$ such that
\begin{equation}\label{eq:flatten}
	D(x,y)\asymp
	\frac{\widehat D(x,y)}
	{\widehat D(x,\infty)\widehat D(y,\infty)},
	\qquad x,y\in X,
\end{equation}
and
$
\id:(X,d)\longrightarrow(X,D)
$
is quasisymmetric. Moreover, $D$ and $\widehat D$ are bi-Lipschitz
equivalent on every set $E\subset X$ on which
$\widehat D(\,\cdot\,,\infty)$ is bounded away from zero.
\end{lemma}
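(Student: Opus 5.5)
The plan is to follow the standard sphericalization/flattening theory of Buckley, Herron and Xie, and of Bonk and Kleiner. For the first part, I would define
\[
 \widehat d(x,y)=\inf\sum_i s(x_{i-1},x_i),\qquad
 s(x,y)=\frac{d(x,y)}{(1+d(x,o))(1+d(y,o))},
\]
with the infimum over finite chains. For the point at infinity I would set $s(x,\infty)=1/(1+d(x,o))$ and $s(\infty,\infty)=0$.

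The key estimate is that the chain infimum is comparable to $s$ itself, i.e.\ $s$ is a quasi-metric with controlled constant; this is the Buckley--Herron--Xie lemma, and it gives \eqref{eq:spherical-xy} and \eqref{eq:spherical-inf}. With these in hand, three facts follow.
\begin{itemize}
\item[(a)] $\widehat d$ is bounded, since $s\le 1$ after the comparison.
\item[(b)] On bounded sets $\widehat d$ is bi-Lipschitz to $d$. Hence $\widehat d$-Cauchy sequences either stay bounded, and then converge by completeness of $d$, or escape to infinity, and then converge to $\infty$ by \eqref{eq:spherical-inf}. So $\widehat X$ is complete.
\item[(c)] $\widehat X$ is separable.
\end{itemize}

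For the second part I would flatten $\widehat D$ at the point $\infty$. Define
\[
 t(x,y)=\frac{\widehat D(x,y)}{\widehat D(x,\infty)\widehat D(y,\infty)}
\]
and let $D$ be the chain infimum of $t$ on $X$. The comparison $D\asymp t$ is again the flattening lemma: the inversion of a metric space at a point is a quasi-metric whose chain metric is comparable to it. This is where I expect the main obstacle. The standard proof needs only that $t$ satisfies a weak triangle inequality, which follows from $\widehat D(x,y)\le\widehat D(x,\infty)+\widehat D(y,\infty)$, together with the Frink-type chain argument. A cleaner route would be to quote that $(X,t)$ satisfies a $K$-quasi-triangle inequality with $K$ absolute, and then apply the Frink/Schroeder metrization, which gives $D\asymp t$ once $K\le 2$. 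If the constant is not small enough, I would replace $t$ by a power $t^\epsilon$ and use that snowflaking preserves quasisymmetry. That replacement changes \eqref{eq:flatten}, though, so I would prefer to check directly that the inversion quasi-metric has $K\le 2$, as in Buckley--Herron--Xie.

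For quasisymmetry of $\id:(X,d)\to(X,D)$, I would write it as a composition of three maps:
\[
 (X,d)\to(X,\widehat d)\to(X,\widehat D)\to(X,D).
\]
The first map is quasisymmetric, since sphericalization is Möbius up to constants by \eqref{eq:spherical-xy}. The middle map is quasisymmetric by hypothesis. The last map is again a bounded-distortion inversion.

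This reduces to a cross-ratio argument. Quasisymmetric maps between bounded spaces are quasi-Möbius (V\"ais\"al\"a), and the identity maps $(X,d)\to(\widehat X,\widehat d)$ and $(\widehat X,\widehat D)\to(X,D)$ preserve cross-ratios up to constants. Hence $(X,d)\to(X,D)$ is quasi-Möbius, and it fixes the point at infinity in both spaces: $d$ and $D$ are both unbounded, because $\widehat D(x,\infty)\to0$ as $x\to\infty$. A quasi-Möbius map between unbounded spaces sending $\infty$ to $\infty$ is quasisymmetric, by V\"ais\"al\"a again. The one thing to verify is that $D$ is unbounded and that $\widehat D(x,\infty)\to 0$ exactly as $d(x,o)\to\infty$; both follow from the quasisymmetry of $\widehat d\to\widehat D$, which preserves convergence to $\infty$.

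The final bi-Lipschitz assertion is immediate from \eqref{eq:flatten}. If $\widehat D(\cdot,\infty)\ge c>0$ on $E$, then $\widehat D(\cdot,\infty)\le\diam\widehat D<\infty$ there as well, so the denominator in \eqref{eq:flatten} is bounded above and below on $E$.
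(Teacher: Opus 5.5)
The gap is the comparison $\tfrac14 t\le D\le t$, i.e.\ \eqref{eq:flatten}, which you flag as the main obstacle but do not prove. The statement you attribute to Buckley--Herron--Xie is true, but the mechanism you propose for it fails: the inversion kernel does not satisfy Frink's condition $t(x,y)\le 2\max\{t(x,z),t(z,y)\}$ in general. Let $x,\infty,y,z$ form a $4$-cycle with $\widehat D(x,\infty)=\widehat D(y,\infty)=\widehat D(x,z)=\widehat D(z,y)=1$ and diagonals $\widehat D(x,y)=\widehat D(z,\infty)=2$. Then $t(x,y)=2$ while $t(x,z)=t(z,y)=\tfrac12$, so the best constant is $4$, not $2$. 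With constant $4$, a quasi-triangle inequality alone says nothing about the chain metric: the kernel $|x-y|^2$ on $\R$ has constant $4$ and chain metric identically $0$. Snowflaking only gives $D\asymp t^{\epsilon}$, which is not \eqref{eq:flatten}. Your gloss on the first part (``i.e.\ $s$ is a quasi-metric with controlled constant'') has the same defect, since the sphericalization kernel also has Frink constant arbitrarily close to $4$.

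The missing idea, used by Buckley--Herron--Xie and by the paper for both $s$ and $t$, is that the reciprocal weight is $1$-Lipschitz for the kernel. With $r=\widehat D(\cdot,\infty)$, the triangle inequality through $\infty$ gives $|1/r(u)-1/r(v)|\le t(u,v)$ and $t(x,y)\le 1/r(x)+1/r(y)$. Assume $r(x)\le r(y)$ and take a chain from $x$ to $y$.
\begin{itemize}
\item If the chain reaches a point $z$ with $r(z)\ge 2r(x)$, its $t$-length is at least $1/r(x)-1/r(z)\ge 1/(2r(x))\ge t(x,y)/4$.
\item Otherwise every denominator along the chain is below $4r(x)^2$, and the triangle inequality for $\widehat D$ gives length at least $\widehat D(x,y)/(4r(x)^2)\ge t(x,y)/4$.
\end{itemize}
The same argument with $s(x)=1+d(x,o)$ in place of $r$ gives $\widehat d\ge s/4$.

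There are also two smaller points. First, your claim that $(X,d)\to(X,\widehat d)$ is quasisymmetric is false, since no quasisymmetric map sends an unbounded space onto a bounded one. That map and your last map are only quasi-M\"obius. Your cross-ratio argument is the right repair, and you do not need V\"ais\"al\"a's unbounded-space theorem: put the quadruple $(x,a,b,\infty)$ into the quasi-M\"obius inequality for $\widehat d\to\widehat D$ and use $\widehat d\asymp s$, $D\asymp t$. This gives $D(x,a)/D(x,b)\le 4\Theta(16\,d(x,a)/d(x,b))$ at once, which is the paper's argument. Second, the finiteness of $\diam\widehat D$ in your bi-Lipschitz step needs a line of justification: quasisymmetric images of bounded spaces are bounded.
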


\begin{proof}
	For $x\in X$, let
	$
	s(x)=1+d(x,o)$. Write $s(\infty)=\infty$ and $1/s(\infty)=0$. 
	Define a symmetric kernel $\widehat q$ on
	$\widehat X=X\cup\{\infty\}$ by
	\[
	\widehat q(x,y)=\frac{d(x,y)}{s(x)s(y)},
	\qquad
	\widehat q(x,\infty)=\widehat q(\infty,x)=\frac1{s(x)},
	\qquad
	\widehat q(\infty,\infty)=0
	\]
	for $x,y\in X$. For $x,y\in\widehat X$, define
	\begin{equation}\label{eq:spherical-chain}
		\widehat d(x,y)
		=
		\inf\left\{
		\sum_{i=1}^N \widehat q(x_{i-1},x_i):
		N\ge1,\;
		x_0=x,\;
		x_N=y,\;
		x_i\in\widehat X
		\right\}.
	\end{equation}
	Thus \eqref{eq:spherical-chain} is the infimum of the $\widehat q$-lengths of all
	finite chains in $\widehat X$ joining $x$ to $y$.  To prove
	\eqref{eq:spherical-xy}--\eqref{eq:spherical-inf}, it suffices to verify
	that, for $x,y\in\widehat X$,
	\[
	\widehat d(x,y)\ge \frac14\widehat q(x,y).
	\]
	If $x=y$, this inequality is trivial.  If $x\ne y$, symmetry allows us to
	assume $s(x)\le s(y)$, and then $x\in X$.  Consider a chain from $x$ to
	$y$.  Since
\begin{equation}\label{eq:reciprocal-lipschitz}
 \left|\frac1{s(u)}-\frac1{s(v)}\right|
 \le \widehat q(u,v),
\end{equation}
if the chain first reaches a point $z$ with $s(z)\ge2s(x)$, then \eqref{eq:reciprocal-lipschitz} gives
\[
\sum_i \widehat q(x_{i-1},x_i)
\ge \frac1{s(x)}-\frac1{s(z)}
\ge\frac1{2s(x)}
\ge\frac14\widehat q(x,y).
\]
Here we used $\widehat q(x,y)\le2/s(x)$.  If the
chain never reaches such a point, none of its vertices is $\infty$, all
denominators are at most $4s(x)^2$, and the triangle inequality gives
\[
 \sum_i\widehat q(x_{i-1},x_i)
 \ge\frac{d(x,y)}{4s(x)^2}\ge\frac14\widehat q(x,y).
\]
Taking the infimum over
all chains yields
$
\widehat d(x,y)\ge\frac14\widehat q(x,y).
$
The definition gives symmetry and the triangle inequality, while the preceding
lower bound separates distinct points; hence $\widehat d$ is a metric.  It is
bounded because $\widehat q\le2$.  To check completeness, let $(x_j)$
be a $\widehat d$-Cauchy sequence.  If $s(x_j)$ is unbounded along a
subsequence, \eqref{eq:spherical-inf} and the Cauchy property imply
$x_j\to\infty$.  Otherwise $s(x_j)$ is eventually bounded, and
\eqref{eq:spherical-xy} shows that $(x_j)$ is $d$-Cauchy.  It then converges
to some $x\in X$ because $(X,d)$ is complete.  Since
$\widehat d(x_j,x)\le \widehat q(x_j,x)\le d(x_j,x)$, it also converges to $x$ in
$\widehat d$.  Thus
$(\widehat X,\widehat d)$ is a bounded complete metric space.

For the reverse construction, let
\[
r(x)=\widehat D(x,\infty),\qquad
i(x,y)=\frac{\widehat D(x,y)}{r(x)r(y)},
\qquad x,y\in X,
\]
and define $D$ to be the chain metric induced by $i$.
The triangle inequality for $\widehat D$ gives
\[
\left|\frac1{r(u)}-\frac1{r(v)}\right|
\le i(u,v).
\]
Repeating the preceding chain argument, with $r$ in place of $s$, gives
\[
\frac14 i(x,y)\le D(x,y)\le i(x,y),
\]
and hence proves \eqref{eq:flatten}.  By its definition, $D$ is symmetric and
satisfies the triangle inequality, while the lower bound separates distinct
points; hence $D$ is a metric.

We next prove that
$
\id:(X,d)\longrightarrow(X,D)
$
is quasisymmetric.  By \cite[Proposition~3.1(ii)]{B11}, every quasisymmetric
map satisfies the following four-point inequality for some homeomorphism
$\Theta:[0,\infty)\to[0,\infty)$ such that
\[
\frac{\widehat D(x,a)\widehat D(b,\infty)}
{\widehat D(x,b)\widehat D(a,\infty)}
\le
\Theta\left(
\frac{\widehat d(x,a)\widehat d(b,\infty)}
{\widehat d(x,b)\widehat d(a,\infty)}
\right)
\]
for all distinct $x,a,b\in X$.

By the comparison $i/4\le D\le i$,
\[
\begin{aligned}
	\frac{D(x,a)}{D(x,b)}
	\le4\frac{i(x,a)}{i(x,b)}
	=4\frac{\widehat D(x,a)\widehat D(b,\infty)}
	{\widehat D(x,b)\widehat D(a,\infty)}.
\end{aligned}
\]
Moreover, the estimates $\widehat q/4\le\widehat d\le \widehat q$ give
\[
\begin{aligned}
	\frac{\widehat d(x,a)\widehat d(b,\infty)}
	{\widehat d(x,b)\widehat d(a,\infty)}
	\le16
	\frac{\widehat q(x,a)\widehat q(b,\infty)}
	{\widehat q(x,b)\widehat q(a,\infty)}
	=16\frac{d(x,a)}{d(x,b)}.
\end{aligned}
\]
Consequently,
\[
\frac{D(x,a)}{D(x,b)}
\le
4\Theta\left(16\frac{d(x,a)}{d(x,b)}\right).
\]
Since $t\mapsto4\Theta(16t)$ is a homeomorphism of $[0,\infty)$
onto itself, the identity $\id:(X,d)\to(X,D)$ is quasisymmetric.

Since $\widehat d$ is bounded and the identity from $\widehat d$ to
$\widehat D$ is quasisymmetric, $\widehat D$ is bounded.  Hence there is
$C<\infty$ such that $\widehat D(x,\infty)\le C$ for every $x\in X$.
Finally, suppose that
\[
0<c\le\widehat D(x,\infty),
\qquad x\in E.
\]
For $x,y\in E$, the definition of $i$ gives
\[
C^{-2}\widehat D(x,y)
\le i(x,y)
\le c^{-2}\widehat D(x,y).
\]
Combining this with $i/4\le D\le i$, we obtain
\[
\frac1{4C^2}\widehat D(x,y)
\le D(x,y)
\le\frac1{c^2}\widehat D(x,y),
\qquad x,y\in E.
\]
Hence $D$ and $\widehat D$ are bi-Lipschitz equivalent on $E$.

\end{proof}

\begin{proof}[Proof of Theorem~\ref{thm:reduction} in the general case]
Assume now that $(X,d)$ is complete and unbounded.  Let
	$
	\widehat X=X\cup\{\infty\}
	$
	and let $\widehat d$ be the metric given by
	Lemma~\ref{lem:sphericalization}. Extend $\mu$ to $\widehat X$ by setting
	$
	\widehat\mu(A)=\mu(A\cap X).
	$
	The measure $\widehat\mu$ has full support. Indeed, $\widehat d$ and
	$d$ induce the same topology on $X$, while every
	$\widehat d$-neighborhood of $\infty$ contains the complement of a
	bounded $d$-ball. Since $X$ is unbounded and $\mu$ has full support,
	such a neighborhood has positive measure.
	
	On every bounded $d$-ball, the metrics $d$ and $\widehat d$ are
	bi-Lipschitz equivalent by \eqref{eq:spherical-xy}.  Since $X$ is the
	countable union of bounded balls, the countable-union formula shows that
	$\dimH^{\widehat d}E=\dimH^dE$ for every $E\subset X$.  Moreover,
	$\widehat\mu(\{\infty\})=0$.  Therefore
	\[
	\dimH^{\widehat d}\widehat\mu
	= \dimH^d\mu<\infty.
	\]
	The bounded complete case therefore provides a metric $\widehat D$ on
	$\widehat X$ such that
	$
	\id:(\widehat X,\widehat d)
	\longrightarrow(\widehat X,\widehat D)
	$
	is quasisymmetric and
	$
	\dimH^{\widehat D}\widehat\mu\le p.
	$
	
	Let $D$ be the metric associated with $\widehat D$ by
	Lemma~\ref{lem:sphericalization}. Then
	$
	\id:(X,d)\longrightarrow(X,D)
	$
	is quasisymmetric. For $m\ge1$, set
	\[
	E_m=
	\left\{x\in X:
	m^{-1}\le\widehat D(x,\infty)\le m
	\right\}.
	\]
	The sets $E_m$ cover $X$, and $D$ and $\widehat D$ are bi-Lipschitz
	equivalent on each $E_m$.
	
	Fix $\varepsilon>0$ and choose a
	full $\widehat\mu$-measure set $A\subset\widehat X$ such that
	$
	\dimH^{\widehat D}A\le p+\varepsilon.
	$
	Then
	\[
	A\cap X=\bigcup_{m=1}^{\infty}(A\cap E_m),
	\]
	and bi-Lipschitz invariance, followed by the same countable-union formula,
	gives
	\[
	\dimH^D(A\cap X)
	=\sup_{m\ge1}\dimH^D(A\cap E_m)
	\le p+\varepsilon.
	\]
	Since $A\cap X$ has full $\mu$-measure, it follows that
	$\dimH^D\mu\le p+\varepsilon$. Letting $\varepsilon \to 0$ yields
	\[
	\dimH^D\mu\le p.
	\]
	
	Finally, suppose that $(X,d)$ is not complete. Let
	$(\overline X,\overline d)$ be its completion, and let
	$
	\iota:X\to \overline X
	$
	be the isometric inclusion. Define
	$
	\overline\mu=\iota_\#\mu.
	$
	Then $\overline X$ is separable and $\overline\mu$ has full support. We also have
	$
	\dimH^{\overline d}\overline\mu
	\le\dimH^d\mu<\infty.
	$
	To see this, fix $s>\dimH^d\mu$ and choose a Borel set
	$A\subset X$ such that $\mu(A)=1$ and $\dimH^d A<s$.  Choose
	$t$ with $\dimH^d A<t<s$.  Then
	$\mathcal H^t_{\overline d}(\iota(A))=0$.  By Borel regularity, there is a
	Borel set $\overline A\subset\overline X$ containing $\iota(A)$ such that
	$\mathcal H^t_{\overline d}(\overline A)=0$.  Thus
	$\overline\mu(\overline A)=1$, $\dimH^{\overline d}\overline A<s$, and
	$\dimH^{\overline d}\overline\mu\le s$.
	Since $s>\dimH^d\mu$ is arbitrary,
	$
	\dimH^{\overline d}\overline\mu\le\dimH^d\mu.
	$
	
	Apply the complete space case to
	$(\overline X,\overline d,\overline\mu)$. We obtain a metric
	$\overline D$ on $\overline X$ such that
	$
	\id:(\overline X,\overline d)
	\longrightarrow(\overline X,\overline D)
	$
	is quasisymmetric and
	$
	\dimH^{\overline D}\overline\mu\le p.
	$
	Let $D$ be the restriction of $\overline D$ to $X$. The restriction
	of a quasisymmetric map is quasisymmetric, so
	\[
	\id:(X,d)\longrightarrow(X,D)
	\]
	is quasisymmetric. Moreover, intersecting any full
	$\overline\mu$-measure Borel set in $\overline X$ with $X$ gives a
	full $\mu$-measure Borel set in $X$. Therefore
	\[
	\dimH^D\mu
	\le\dimH^{\overline D}\overline\mu
	\le p.
	\]
	This completes the proof.
\end{proof}

\section{An alternative proof of Corollary~\ref{cor:doubling}}\label{sec:doubling}
This section gives an independent proof of Corollary~\ref{cor:doubling}.
It combines Assouad's embedding with Romney's dimension
reduction for Lebesgue measure.

\begin{lemma}\label{lem:translation}
Let \(n\ge1\), let \((S,d)\) be a metric space, let \(\mu\) be a
\(\sigma\)-finite Borel measure on \(S\), let \(\varphi:S\to\R^n\) be a Borel
map, and let \(U\subset\R^n\) be a Borel set with \(\mathcal L^n(U)=0\). Then
for $\mathcal L^n$-almost every $t\in[0,1]^n$,
\[
  \mu\{x\in S:\ \varphi(x)+t\in U\}=0.
\]
\end{lemma}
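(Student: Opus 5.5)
The plan is a Fubini--Tonelli argument on the product space $S\times[0,1]^n$. Consider the set
\[
 G=\{(x,t)\in S\times[0,1]^n:\ \varphi(x)+t\in U\}.
\]
The map $(x,t)\mapsto\varphi(x)+t$ is Borel from $S\times[0,1]^n$ to $\R^n$, since $\varphi$ is Borel and addition is continuous. Hence $G$ is measurable for the product $\sigma$-algebra $\mathcal B(S)\otimes\mathcal B([0,1]^n)$. Here one should be careful that the Borel $\sigma$-algebra of the product may exceed the product $\sigma$-algebra when $S$ is nonseparable. To avoid this, write $G$ as the preimage of the Borel set $\{(y,t):y+t\in U\}\subset\R^n\times\R^n$ under $(x,t)\mapsto(\varphi(x),t)$. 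That map is measurable from $\mathcal B(S)\otimes\mathcal B([0,1]^n)$ to $\mathcal B(\R^n)\otimes\mathcal B(\R^n)=\mathcal B(\R^{2n})$, because each coordinate is measurable and $\R^n$ is second countable.

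Since $\mu$ is $\sigma$-finite and $\mathcal L^n$ is finite on $[0,1]^n$, Tonelli's theorem applies to $\mathbf 1_G$. We compute the integral by integrating in $t$ first. For fixed $x$, the slice is
\[
\{t\in[0,1]^n:t\in U-\varphi(x)\}.
\]
Its Lebesgue measure is at most $\mathcal L^n(U-\varphi(x))=\mathcal L^n(U)=0$ by translation invariance. Thus $(\mu\otimes\mathcal L^n)(G)=0$.

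Now integrate in $x$ first. Tonelli gives
\[
\int_{[0,1]^n}\mu\{x:\varphi(x)+t\in U\}\,dt=0,
\]
where the integrand is a measurable function of $t$ by Tonelli. A nonnegative function with zero integral vanishes almost everywhere, so $\mu\{x:\varphi(x)+t\in U\}=0$ for $\mathcal L^n$-almost every $t\in[0,1]^n$.

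The only delicate point is the measurability of $G$ with respect to the product $\sigma$-algebra, which the factorization through $\R^{2n}$ above handles. The $\sigma$-finiteness hypothesis is exactly what is needed for Tonelli.
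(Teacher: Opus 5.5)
Your proof is correct and uses the same Tonelli-plus-translation-invariance argument as the paper. The paper splits $S$ into finite-measure Borel pieces $S_j$, applies Tonelli on each piece, and takes the union of the exceptional null sets $N_j$, whereas you invoke Tonelli directly for the $\sigma$-finite measure. You also check that $G\in\mathcal B(S)\otimes\mathcal B([0,1]^n)$ via the factorization through $\R^{2n}$, a measurability point the paper leaves implicit.
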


\begin{proof}
Choose Borel sets \(S_j\subset S\) such that
\(S=\bigcup_{j=1}^{\infty}S_j\) and \(\mu(S_j)<\infty\) for every \(j\).

Fix \(j\in \N\). By Tonelli's theorem,
\[
\begin{aligned}
  \int_{[0,1]^n}\mu\{x\in S_j:\ \varphi(x)+t\in U\}\,dt
  &=
  \int_{[0,1]^n}\int_{S_j}\mathbf 1_U(\varphi(x)+t)\,d\mu(x)\,dt  \\
  &=
  \int_{S_j}\int_{[0,1]^n}\mathbf 1_U(\varphi(x)+t)\,dt\,d\mu(x)  \\
  &=
  \int_{S_j}\mathcal L^n\bigl([0,1]^n\cap(U-\varphi(x))\bigr)\,d\mu(x)
  =
  0.
\end{aligned}
\]
Hence there is a set \(N_j\subset[0,1]^n\) with
\(\mathcal L^n(N_j)=0\) such that, for every
\(t\in[0,1]^n\setminus N_j\),
\begin{equation}\label{eq:translation}
\mu\{x\in S_j:\ \varphi(x)+t\in U\}=0.
\end{equation}
Set \(N=\bigcup_{j=1}^{\infty}N_j\). Then \(\mathcal L^n(N)=0\), and for
each \(t\in[0,1]^n\setminus N\), identity \eqref{eq:translation} holds for every
\(j\in\N\). Since \(S=\bigcup_{j=1}^{\infty}S_j\), this gives
\[
  \mu\{x\in S:\ \varphi(x)+t\in U\}=0.
\]
This proves the lemma.
\end{proof}

Given metric spaces $(X,d_X)$ and $(Y,d_Y)$, an injective map $f:X\to Y$
is a \emph{quasisymmetric embedding} if the induced map
\[
f: (X,d_X) \longrightarrow (f(X),\, d_Y|_{f(X)})
\]
is quasisymmetric.

\begin{lemma} \label{lem:euclidean-reduction}
Let \(n\ge1\), let \((S,d)\) be a metric space, and let \(\mu\) be a
\(\sigma\)-finite Borel measure on \(S\) with \(\spt\mu=S\).
Suppose that there is a quasisymmetric embedding
\(\varphi:S\to\R^n\). Then, for every \(p>0\), there exist a metric space
\(Y\) and a quasisymmetric homeomorphism \(h:S\to h(S)\subset Y\) such that
\[
  \dimH(h_\#\mu)\le p.
\]
\end{lemma}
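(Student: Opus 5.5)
The plan is to transport Romney's theorem \eqref{eq:romney} through the embedding $\varphi$. I will do this after a random translation, chosen via Lemma~\ref{lem:translation}, so that the push-forward of $\mu$ avoids the exceptional Lebesgue-null set.

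First, the setup. Since $\varphi(S)$ is a subset of $\R^n$, it is bounded after composing with a suitable homeomorphism. To avoid that complication, I will work on the cube $[-1,2]^n$ directly. Romney's construction is self-similar, so the scaled version holds on any cube. Concretely, take a quasisymmetric homeomorphism $F:[0,1]^n\to Z$ with a Borel set $E$ such that $\dimH F(E)\le p$ and $\mathcal L^n([0,1]^n\setminus E)=0$. By rescaling by a factor $3$ and translating, I obtain a quasisymmetric $F:Q\to Z$ on $Q=[-1,2]^n$ with a Borel $E\subset Q$, $\mathcal L^n(Q\setminus E)=0$, and $\dimH F(E)\le p$.

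If $\varphi(S)$ is unbounded, I will reduce to the bounded case as follows. Cover $\R^n$ by countably many unit cubes and apply Romney's theorem in each. Alternatively, I will pick one quasisymmetric self-map of $\R^n$ that is singular on all of them by a periodic-tiling version. I expect this to be a routine modification, but it is one point to check. The simplest route is to note that quasisymmetry of $\varphi$ is preserved under postcomposition with a Möbius-type inversion sending $\R^n$ into a bounded set. That map is only locally bi-Lipschitz, which preserves Lebesgue-null sets, so I may assume $\varphi(S)\subset[0,1]^n$.

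Second, the translation. Put $U=Q\setminus E$, which is Lebesgue null. By Lemma~\ref{lem:translation}, applied with the $\sigma$-finite measure $\mu$ and the Borel (continuous) map $\varphi$, there is $t\in[0,1]^n$ with $\mu\{x:\varphi(x)+t\in U\}=0$. Set $\psi=\varphi+t$, so that $\psi(S)\subset Q$ and $\mu(S\setminus\psi^{-1}(E))=0$.

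Third, define $h=F\circ\psi:S\to Y:=Z$. Compositions and restrictions of quasisymmetric maps are quasisymmetric, and a translation is an isometry, so $h$ is a quasisymmetric homeomorphism onto $h(S)$. The set $A=\psi^{-1}(E)$ is Borel and has full $\mu$-measure. Moreover, $h(A)\subset F(E)$, so $\dimH h(A)\le p$. Since $F(E)$ may not be Borel in $h(S)$, I will replace $h(A)$ with its Borel image. Because $h$ is a homeomorphism onto its image, $h(A)$ is Borel in $h(S)$. Hence $h_\#\mu(h(S)\setminus h(A))=0$, and $\dimH(h_\#\mu)\le p$.

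The main obstacle is the first step: matching the domain of Romney's theorem, a fixed cube, to the possibly unbounded image $\varphi(S)$ while keeping quasisymmetry. The translation step itself is straightforward given Lemma~\ref{lem:translation}.
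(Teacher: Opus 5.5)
\textbf{There is a genuine gap in your first step, when $\varphi(S)$ is unbounded.} Your ``simplest route'' cannot work, because no quasisymmetric map sends an unbounded space onto a bounded one. To see this, let $f$ be $\eta$-quasisymmetric and fix $x\ne a$. Since $\eta(0)=0$,
\[
 d(f(x),f(b))\ge\frac{d(f(x),f(a))}{\eta\bigl(d(x,a)/d(x,b)\bigr)}\longrightarrow\infty
 \quad\text{as } d(x,b)\to\infty .
\]
So if $\varphi(S)$ is unbounded, then $S$ is unbounded too, and no postcomposition of $\varphi$ gives a quasisymmetric embedding with image in $[0,1]^n$. An inversion is only quasi-M\"obius, not quasisymmetric. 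Local bi-Lipschitz control preserves Lebesgue-null sets but not quasisymmetry.

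Your other two options do not close the gap either. Separate Romney maps on countably many cubes do not glue to one quasisymmetric map, and you need a single $h$ that is singular on all of $\varphi(S)$ at once. The ``periodic-tiling version'' is exactly the missing statement, and you assert it without proof. Nor can the unbounded case be avoided: in the application in Section~\ref{sec:doubling}, $S$ can be all of $\R^n$.

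\textbf{The repair} is to use Romney's result on all of $\R^n$ rather than on a cube. His result that $\dimC\mathcal L^n=0$ (Corollary~1.4 of his paper) gives a quasisymmetric $g:\R^n\to Y$ and a Borel $E\subset\R^n$ with $\mathcal L^n(\R^n\setminus E)=0$ and $\dimH g(E)\le p$. Take $U=\R^n\setminus E$. Your translation step (via Lemma~\ref{lem:translation}) and your composition and Borel-image step then go through verbatim, with no boundedness assumption and no rescaling. This is exactly the paper's proof. As written, your argument is correct when $\varphi(S)$ is bounded, after normalizing $\varphi$ by a similarity.
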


\begin{proof}
Let \(p>0\).  Romney's theorem \cite[Corollary~1.4]{R19} states that
$\dimC\mathcal L^n=0$.  Hence there exist a metric space \(Y\), a
quasisymmetric homeomorphism \(g:\R^n\to Y\), and a Borel set
\(E\subset\R^n\) such that
\[
  \mathcal L^n(\R^n\setminus E)=0
  \quad\text{and}\quad
  \dimH g(E)\le p.
\]

Set \(U=\R^n\setminus E\).  By Lemma~\ref{lem:translation}, choose
\(t_0\in[0,1]^n\) such
that
\[
  \mu\{x\in S:\ \varphi(x)+t_0\in U\}=0.
\]
Define
\[
  h:S\to Y,\qquad h(x)=g(\varphi(x)+t_0).
\]
Since the translation \(z\mapsto z+t_0\) is an isometry, \(h\) is a
quasisymmetric homeomorphism from \(S\) onto \(h(S)\).  The set
\[
  F=\{x\in S:\ \varphi(x)+t_0\notin U\}
\]
has full \(\mu\)-measure in \(S\), and
\[
  h(F)\subset g(E).
\]
Thus \(h(F)\) has full \(h_\#\mu\)-measure. 
Therefore
\[
  \dimH(h_\#\mu)
  \le \dimH h(F)
  \le \dimH g(E)
  \le p.
\]
This proves the lemma.
\end{proof}

We can now combine Assouad's embedding with
Lemma~\ref{lem:euclidean-reduction}.

\begin{proof}[Alternative proof of Corollary~\ref{cor:doubling}]
Let \(S=\spt\mu\).  Since $X$ is doubling, it is separable, and hence
$\mu(X\setminus S)=0$.  We therefore regard $\mu$ as a locally finite Borel
measure on $S$.  Since \(S\) is doubling, Assouad's embedding theorem
\cite[Theorem~12.1]{H01} gives a quasisymmetric embedding
\(\varphi:S\to\R^n\) for some
\(n\ge1\).

Since $S$ has a countable base and \(\mu\) is locally finite, there are Borel
sets \(S_j\subset S\) such that
\[
  S=\bigcup_{j=1}^{\infty}S_j
  \quad\text{and}\quad
  \mu(S_j)<\infty
  \quad\text{for all }j.
\]
Thus \(\mu\) is \(\sigma\)-finite.

By Lemma~\ref{lem:euclidean-reduction}, applied to \((S,d)\), \(\mu\), and
\(\varphi\), for every \(p>0\) there exist a metric space \(Y\) and a quasisymmetric
homeomorphism \(h:S\to h(S)\subset Y\) such that
\(\dimH(h_\#\mu)\le p\).  Since \(p>0\) is
arbitrary, \(\dimC\mu=0\).
\end{proof}

\section{Singular quasisymmetric maps}\label{sec:singular}

This section proves Theorem~\ref{thm:singular}.  We recall the part of
Romney's metric construction needed for the upper bound, use a counting
argument to control the exceptional set, and finish with the sharpness
statement in Proposition~\ref{prop:sharpness}.

\subsection{Romney's construction}
The case $n=1$ of Theorem~\ref{thm:singular} is Tukia's theorem
\cite{T89}, so the rest of this section assumes $n\ge2$.  We first review
Romney's construction \cite[Sections~2--4]{R19} and isolate the diameter
estimate needed in the proof.

Let $Q=[0,1]^n$, equipped initially with the Euclidean metric
$|\cdot|$.  Fix an integer $M>2n$ and a real number $L>1$, to be chosen
later. For $k\ge1$, let
$W_k=\{0,1,\ldots,M^n-1\}^k$, set $W_0=\{\emptyset\}$, and write
$W=\bigcup_{k\ge0}W_k$.  The standard $M$-adic grid
$\mathcal Q_k$ consists
of $M^{nk}$ closed cubes of side length $M^{-k}$ with pairwise disjoint
interiors and union $Q$.  We
label these cubes as $Q_w$, $w\in W_k$, in such a way that
$Q_{(w,i)}\subset Q_w$ for $0\le i<M^n$. Set $Q_{\emptyset}=Q$. We say the
\emph{level} of $Q_w$ is $k$. The vertices of the level-$k$ cubes form the set
\[
\mathcal V_k = \left\{ (i_1 M^{-k}, \ldots, i_n M^{-k}) : 0 \le i_1, \ldots, i_n \le M^k \right\},
\]
and we write $\mathcal V=\bigcup_{k\ge0}\mathcal V_k$.

The \emph{children} of $Q_w\in\mathcal Q_k$ are the cubes in
$\mathcal Q_{k+1}$ contained in $Q_w$.  Divide these children into three
subcollections.  For $E,F\subset Q$, write
$d_{|\cdot|}(E,F)=\inf\{|x-y|:x\in E,\ y\in F\}$ for their Euclidean
distance.  Then
\begin{align*}
\mathcal P_w^1 &= \bigl\{ Q\in \mathcal Q_{k+1} : Q\subset Q_w \text{ and } Q\cap \partial Q_w\neq \varnothing \bigr\}, \\
\mathcal P_w^2 &= \bigl\{ Q\in \mathcal Q_{k+1}\setminus \mathcal P_w^1 : Q\subset Q_w \text{ and } d_{|\cdot|}\bigl(Q, \cup \mathcal P_w^1\bigr) < (n-1)M^{-(k+1)} \bigr\}, \\
\mathcal P_w^3 &= \bigl\{ Q\in \mathcal Q_{k+1} : Q\subset Q_w \text{ and } Q\notin \mathcal P_w^1 \cup  \mathcal P_w^2\bigr\},
\end{align*}
where $\bigcup\mathcal P_w^1$ denotes the union of the cubes in
$\mathcal P_w^1$. These collections form,
respectively, the boundary layer, the intermediate layer, and the central
region of $Q_w$.  Writing $\#\mathcal A$ for the number of elements of a
finite set $\mathcal A$, their cardinalities are
\[
\#\mathcal P_w^1 = M^n - (M-2)^n,\qquad
\#\mathcal P_w^2 = (M-2)^n - (M-2n)^n,\qquad
\#\mathcal P_w^3 = (M-2n)^n.
\]
For $j\in\{1,2,3\}$, set $P_w^j=\bigcup\mathcal P_w^j$.

We next define \emph{conformal weights} $\rho_k:Q\to(0,\infty)$. Set
$\rho_0\equiv1$. Assuming \(\rho_k\) is known,  for $w\in W_k$, we define \(\rho_{k+1}\) on the interiors of the children of \(Q_w\) as follows. Let \(q_w\) denote the center of \(Q_w\). Then
\[
\rho_{k+1}(x) =
\begin{cases}
	\rho_k(q_w), & \text{ if }x\in P_w^1,\\
	(M-2n+1)\,\rho_k(q_w), & \text{ if }x\in P_w^2,\\
	L^{-1}\,\rho_k(q_w), & \text{ if }x\in P_w^3,
\end{cases}
\]
Finally, at each grid-boundary point $x$, define $\rho_{k+1}(x)$ as
the minimum of the constant values assigned on the interiors of the
level-$(k+1)$ cubes containing $x$. This is the
lower-semicontinuous extension of the preceding definition.

For $k\ge0$, the weight $\rho_k$ induces the \emph{length metric}
\[
D_k(x,y) = \inf_\gamma \int_\gamma \rho_k \, ds,
\]
where the infimum is over all curves $\gamma$ of finite Euclidean length
joining $x$ to $y$, and $ds$ is Euclidean arclength. Romney proves that, for each
$x,y\in\mathcal V$, the sequence $(D_k(x,y))$ is eventually
nondecreasing and bounded. Thus
\[
D(x,y)=\lim_{k\to\infty}D_k(x,y),\qquad x,y\in\mathcal V,
\]
defines a metric on $\mathcal V$ that extends by continuity to a metric on $Q$;
see \cite[Section~4]{R19}. Moreover, the identity map
\[
f\colon Q \to X, \qquad X=(Q,D)
\]
is quasisymmetric by \cite[Proposition~4.1]{R19}.

For $w\in W_k$, write $r(w)=\rho_k(q_w)$. For $0\le i\le k$,
let $w^{(i)}\in W_i$ be the prefix of $w$ of length $i$. Note that $Q_w\subset Q_{w^{(i)}}$ for each $i$. Define
\[
c(w)=\#\bigl\{i\in\{1,\ldots,k\}:Q_{w^{(i)}}
\in\mathcal P_{w^{(i-1)}}^3\bigr\}.
\]
Thus $c(w)$ is the number of visits to the central region during the
first $k$ steps. Since each noncentral step multiplies the weight by at
most $A:=M-2n+1$, while each central step multiplies it by $L^{-1}$,
\begin{equation}\label{eq:romney-weight}
r(w)\leq A^k L^{-c(w)}.
\end{equation}
The diameter estimate used in Subsection~\ref{subsec:singular-proof} is
an immediate consequence of \cite[Lemma~3.3]{R19}.

\begin{lemma}\label{lem:romney-diameter}
There is a constant $C_0=C_0(n,M,L)<\infty$ such that, for every
$k\ge0$ and every $Q_w\in\mathcal Q_k$,
\[
  \diam_D f(Q_w)
  \le
  C_0 r(w)M^{-k}\leq C_0 A^k L^{-c(w)} M^{-k},
\]
where $A=M-2n+1$.
\end{lemma}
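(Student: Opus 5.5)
The plan is to read the estimate off from Romney's Lemma~3.3 in \cite{R19}, and then bound $r(w)$ by the product formula \eqref{eq:romney-weight}. As I recall it, \cite[Lemma~3.3]{R19} compares the limiting metric $D$ on a cube $Q_w\in\mathcal Q_k$ with the frozen weight. It says that $D$-distances between points of $Q_w$ are bounded by a constant (depending on $n,M,L$) times the $D_k$-distance computed with the constant weight $\rho_k(q_w)$ on $Q_w$. Equivalently, refining the weight at levels $>k$ inside $Q_w$ changes lengths of curves crossing $Q_w$ by at most a bounded factor.

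First I would fix $x,y\in Q_w$ and connect them by a straight segment, or a Euclidean path of length at most $\sqrt n\,M^{-k}$, staying in $Q_w$. I would then estimate $D(x,y)$ by $\liminf_j D_j(x,y)$, after approximating $x,y$ by grid vertices in $\mathcal V$ and using continuity of $D$. The key input is that, for every $j\ge k$, there is a curve in $Q_w$ joining the two vertices with $\int\rho_j\,ds\le C_0\rho_k(q_w)M^{-k}$.

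This is exactly the self-similar mechanism in Romney's construction. Within one child step, the weight on the boundary layer $P^1$ equals the parent weight. Hence a curve that travels through boundary-layer children pays at most the parent weight times the Euclidean length. Iterating inside subcubes gives a geometric series, which is bounded because the inflation factor is used only on the intermediate layer and can be avoided. Passing to the limit gives $\diam_D f(Q_w)\le C_0 r(w)M^{-k}$.

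The second inequality is then immediate from \eqref{eq:romney-weight}: each step multiplies the weight by $1$, $A$ or $L^{-1}$. Noncentral steps therefore contribute at most $A$ each, and central steps contribute exactly $L^{-1}$, which gives $r(w)\le A^{k-c(w)}L^{-c(w)}\le A^kL^{-c(w)}$.

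The main obstacle is the vertex and boundary-point case. There $\rho_k$ is defined as the lower-semicontinuous minimum, and a point $x\in Q_w$ on $\partial Q_w$ may see smaller weights from neighbouring cubes. This only helps the upper bound, but one must check that the curve can be kept inside $Q_w$, or inside the closed cube, so that only $\rho_k(q_w)$ is relevant. Here I would simply cite the statement of \cite[Lemma~3.3]{R19} as written, rather than reprove it.
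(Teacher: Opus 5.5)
Your proposal is correct and follows the paper's proof: approximate points of $Q_w$ by grid vertices in $\mathcal V\cap Q_w$, use \cite[Lemma~3.3]{R19} to bound $\lim_m D_m$ between such vertices by a constant times $r(w)$ times their Euclidean distance, pass to the limit by continuity of $D$ (the paper measures distances to one fixed vertex $z$ of $Q_w$ and then applies the triangle inequality), and finish with \eqref{eq:romney-weight}. One small caveat, which does not matter since you cite Lemma~3.3 rather than reprove it: your heuristic that the intermediate-layer inflation ``can be avoided'' is not the actual mechanism---reaching a vertex in a central region does force crossing an intermediate layer, and the geometric series converges because the weight grows by a factor of at most $A<M$ per level while lengths shrink by a factor of $M$.
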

\begin{proof}
Fix a vertex $z$ of $Q_w$. Given $x\in Q_w$, choose
$x_j\in\mathcal V\cap Q_w$ with $x_j\to x$ in the Euclidean metric.
The continuous extension of $D$ and \cite[Lemma~3.3]{R19} give
\[
\begin{aligned}
D(x,z)
 &=\lim_{j\to\infty}D(x_j,z)
  =\lim_{j\to\infty}\lim_{m\to\infty}D_m(x_j,z)\\
 &\le C_1r(w)\lim_{j\to\infty}|x_j-z|
  =C_1r(w)|x-z|
 \le C_1\sqrt n\,r(w)M^{-k},
\end{aligned}
\]
where $C_1=C_1(n,M,L)$. Applying this estimate to two points of $Q_w$
and using the triangle inequality yields
$\diam_D f(Q_w)\le2C_1\sqrt n\,r(w)M^{-k}$. The result now follows from
\eqref{eq:romney-weight}, with $C_0=2C_1\sqrt n$.
\end{proof}

\subsection{Proof of Theorem~\ref{thm:singular}}
\label{subsec:singular-proof}
The preceding lemma gives the metric estimate needed here.  We combine it
with a count of cubes that visit the central region too infrequently.
Fix $0<\theta<1/2$.  A cube $Q_w\in\mathcal Q_k$ is called
\emph{$\theta$-good} if
\[
  c(w)\ge \theta k.
\]
Otherwise it is called \emph{$\theta$-bad}.  Let $G_k$ be the union of all
$\theta$-good cubes in $\mathcal Q_k$, and define
\begin{equation}\label{eq:Etheta}
  E_\theta
  :=
  \limsup_{k\to\infty}G_k
  =
  \bigcap_{N=1}^{\infty}\bigcup_{k\ge N}G_k.
\end{equation}
Thus $E_\theta$ is the Borel set of points that belong to $\theta$-good cubes
at infinitely many levels.  We first count the bad cubes, then estimate the
Euclidean dimension of the exceptional set and the $D$-dimension of
$f(E_\theta)$.

\begin{lemma}\label{lem:bad-cube-count}
There is a constant $C_n>1$, depending only on $n$, such that, for all
sufficiently large $M$, the number $B_k$ of $\theta$-bad cubes in
$\mathcal Q_k$ satisfies
\[
  B_k
  \le
  (k+1)\exp(kH(\theta))
  (C_n M^{n-1})^k
  (C_n M)^{\theta k},
\]
where
\[
  H(\theta)
  =
  -\theta\log\theta-(1-\theta)\log(1-\theta)
\]
is the \emph{entropy function}.
\end{lemma}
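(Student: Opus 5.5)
Each cube $Q_w\in\mathcal Q_k$ corresponds to a word $w=(w_1,\ldots,w_k)$. At step $i$ the child $Q_{w^{(i)}}$ of $Q_{w^{(i-1)}}$ lies in exactly one of $\mathcal P^1,\mathcal P^2,\mathcal P^3$, and the class depends only on the letter $w_i$, not on the earlier letters. So I would count words by first recording the set of steps at which the central region is visited. Concretely, I encode $w$ by the set $I\subset\{1,\ldots,k\}$ of central steps, with $\#I=c(w)$, together with the letters themselves.

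For fixed $I$ with $\#I=c$, the number of words with exactly this set of central steps is
\[
  \bigl((M-2n)^n\bigr)^{c}\,\bigl(M^n-(M-2n)^n\bigr)^{k-c}.
\]
I bound each factor:
\begin{itemize}
  \item the noncentral count satisfies $M^n-(M-2n)^n\le 2n\cdot n M^{n-1}\le C_nM^{n-1}$, by the mean value theorem;
  \item the central count is trivially $(M-2n)^n\le M^n$.
\end{itemize}
Bad words have $c<\theta k$, so
\[
  B_k\le\sum_{c<\theta k}\binom{k}{c}\,M^{nc}\,(C_nM^{n-1})^{k-c}.
\]

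Next I rewrite the summand as
\[
  (C_nM^{n-1})^k\,\binom{k}{c}\,\bigl(M^n/(C_nM^{n-1})\bigr)^{c}
  =(C_nM^{n-1})^k\binom{k}{c}\bigl(M/C_n\bigr)^c .
\]
Since $C_n>1$ we have $M/C_n\le C_nM$, and $C_nM\ge1$ gives $(C_nM)^c\le(C_nM)^{\theta k}$ for $c\le\theta k$. For the binomial factor I use the standard entropy bound $\binom{k}{c}\le\exp(kH(c/k))\le\exp(kH(\theta))$. This holds because $c/k\le\theta<1/2$ and $H$ is increasing on $[0,1/2]$. (One can check $\binom{k}{c}\le \bigl(\tfrac kc\bigr)^c\bigl(\tfrac k{k-c}\bigr)^{k-c}$ via the binomial expansion of $1=(c/k+(k-c)/k)^k$.) The sum has at most $k+1$ terms, which gives the stated inequality.

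"Sufficiently large $M$" is only needed so that $M>2n$ and the mean value estimate hold, and so that $C_nM\ge1$.

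The only mildly delicate step is confirming that membership in $\mathcal P^3_{w^{(i-1)}}$ depends only on the local position of the child inside its parent. This is clear since the parent is a translated, rescaled copy of $Q$ and the three layers are defined relative to it. With that in hand, the counts are exactly product counts, and the rest is the entropy bound.
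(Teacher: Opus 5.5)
Your proof is correct and is essentially the paper's argument: write $B_k=\sum_{j<\theta k}\binom kj b^ja^{k-j}$ with $a=M^n-(M-2n)^n$ and $b=(M-2n)^n$, bound $a\le C_nM^{n-1}$, then use $\binom kj\le\exp(kH(\theta))$ and the fact that the sum has at most $k+1$ terms. There are two harmless differences: you bound $b\le M^n$ directly, whereas the paper uses the lower bound $a\ge C_n^{-1}M^{n-1}$ to get $b/a\le C_nM$, so your version works for every $M>2n$; and the labeling point you flag is not needed, because every parent cube has exactly $b$ central and $a$ noncentral children regardless of how the letters are assigned.
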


\begin{proof}
Set
\[
a=M^n-(M-2n)^n
\quad\text{and}\quad
b=(M-2n)^n.
\]
The numbers of noncentral and central choices at each step are $a$ and
$b$, respectively. Consequently,
\[
  B_k
  =
  \sum_{j<\theta k}
  \binom{k}{j}
  b^j a^{k-j}=\sum_{j<\theta k}
  \binom{k}{j}
  \left(\frac{b}{a}\right)^j a^k.
\]

For all sufficiently large $M$, a constant $C_n>1$ depending only on
$n$ can be chosen so that
\[
C_n^{-1} M^{n-1}\leq a\leq C_n M^{n-1}, \qquad \frac{b}{a}\leq C_n M.
\]
The standard entropy estimate
\[
\binom{k}{j}\le \exp\bigl(kH(j/k)\bigr),\qquad 0\le j\le k,
\]
holds with the convention $0\log0=0$. Since $H$ is increasing on
$[0,1/2]$ and $0<\theta<1/2$,
\[
  \sum_{j< \theta k}\binom{k}{j}
  \le (k+1)\exp(k H(\theta)).
\]
For $j<\theta k$, we also have
$(b/a)^j\le(C_nM)^{\theta k}$. Combining these estimates proves the
lemma.
\end{proof}

\begin{lemma}\label{lem:bad-set-dimension}
For every $p>0$, there is a $\theta\in(0,1/2)$ such that, for all
sufficiently large $M$,
\[
  \dimH(Q\setminus E_\theta)\le n-1+p.
\]
\end{lemma}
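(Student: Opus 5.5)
The set $Q\setminus E_\theta$ is the set of points that lie in $\theta$-good cubes at only finitely many levels. Thus
\[
Q\setminus E_\theta=\bigcup_{N\ge1}\bigcap_{k\ge N}(Q\setminus G_k)\subset\bigcup_{N\ge1}\bigcap_{k\ge N}\overline{Q\setminus G_k},
\]
and for each $k\ge N$ the set $\bigcap_{k'\ge N}(Q\setminus G_{k'})$ is covered by the $\theta$-bad cubes of $\mathcal Q_k$. A point of $Q\setminus G_k$ lies in some level-$k$ cube that is not good, and the closed cubes cover $Q$, so every such point lies in a bad cube. By countable stability of Hausdorff dimension, it therefore suffices to bound the dimension of each set $F_N=\bigcap_{k\ge N}\bigcup\{Q_w: w\in W_k \text{ bad}\}$. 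For each $k\ge N$, the set $F_N$ is covered by $B_k$ cubes of diameter $\sqrt n\,M^{-k}$. Hence for any $s$,
\[
\mathcal H^s_{\sqrt n M^{-k}}(F_N)\le B_k\,(\sqrt n)^s M^{-ks},
\]
and it suffices to show that $B_kM^{-ks}\to0$ as $k\to\infty$ whenever $s>n-1+p$. This gives $\mathcal H^s(F_N)=0$, hence $\dimH F_N\le n-1+p$ and the lemma.

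Plug in Lemma~\ref{lem:bad-cube-count}:
\[
B_kM^{-ks}\le (k+1)\Bigl[e^{H(\theta)}\,C_n^{1+\theta}\,M^{\,n-1+\theta-s}\Bigr]^k.
\]
Taking logarithms, the bracket is less than $1$ as soon as
\[
(s-n+1-\theta)\log M> H(\theta)+(1+\theta)\log C_n .
\]
The choice of parameters is the only delicate point, because the exponent picks up the extra $+\theta$ from the factor $(C_nM)^{\theta k}$ (the ratio $b/a$ grows with $M$). So I would first fix $\theta\in(0,1/2)$ with $\theta<p/2$, depending only on $p$. Then for $s>n-1+p$ we have $s-n+1-\theta>p/2>0$. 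Since $H(\theta)$ and $C_n$ do not depend on $M$, the displayed inequality holds for all sufficiently large $M$, namely $\log M>2(H(\theta)+2\log C_n)/p$, together with the largeness required in Lemma~\ref{lem:bad-cube-count}. Then the bracket is a constant $\lambda<1$, and $(k+1)\lambda^k\to0$.

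It remains to make sure the quantifiers match the statement: $\theta$ depends only on $p$ (and not on $M$), and the threshold on $M$ depends on $n,p,\theta$. Since $s>n-1+p$ was arbitrary, we get $\dimH(Q\setminus E_\theta)\le n-1+p$. One last point: the bound holds uniformly in $L$, because bad-cube counting does not involve $L$, which is consistent with $L$ being chosen later. I do not expect any real obstacle beyond keeping track of the $\theta\log M$ loss just described.
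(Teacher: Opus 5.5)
Your proposal is correct and follows the paper's proof essentially step for step. Both arguments write $Q\setminus E_\theta=\bigcup_N F_N$, cover $F_N$ by the $B_k$ bad cubes of diameter $\sqrt n\,M^{-k}$, and apply Lemma~\ref{lem:bad-cube-count} to reduce everything to the bracket $e^{H(\theta)}C_n^{1+\theta}M^{n-1+\theta-s}<1$. The only cosmetic difference is in how the parameters are chosen: you fix $\theta<p/2$ first and give an explicit threshold on $M$, whereas the paper records the bound $n-1+\theta+\bigl(H(\theta)+(\theta+1)\log C_n\bigr)/\log M$ and only then chooses $\theta$ small and $M$ large.
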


\begin{proof}
For $N\ge1$, set
\[
F_N:=\bigcap_{k\ge N}(Q\setminus G_k).
\]
Then $Q\setminus E_\theta=\bigcup_{N\ge1}F_N$. For every $k\ge N$,
the set $F_N$ is covered by the $\theta$-bad cubes in $\mathcal Q_k$,
each of which has Euclidean diameter at most $\sqrt n\,M^{-k}$.

Let
\[
  s>
  n-1+\theta+
  \frac{H(\theta)+(\theta+1)\log C_n}{\log M}.
\]
By Lemma~\ref{lem:bad-cube-count}, the covering sum in the definition of
$\mathcal H^s(F_N)$, using cubes of diameter at most $\sqrt nM^{-k}$, is
bounded by
\[
  B_k(\sqrt n\,M^{-k})^s
  \le
  (k+1)n^{s/2}
  \left[
    \exp(H(\theta))C_n^{\theta+1}
    M^{n-1+\theta-s}
  \right]^k.
\]
The bracket is strictly less than $1$. Letting $k\to\infty$ gives
$\mathcal H^s(F_N)=0$. Since $Q\setminus E_\theta$ is the countable union
of the sets $F_N$,
\[
  \dimH(Q\setminus E_\theta)
  \le
  n-1+\theta+
  \frac{H(\theta)+(\theta+1)\log C_n}{\log M}.
\]
First choose $\theta>0$ sufficiently small and then choose $M$
sufficiently large so that the right-hand side is at most
$n-1+p$.
\end{proof}

\begin{lemma}\label{lem:good-image-dimension}
Fix $p>0$, $\theta\in(0,1/2)$, and $M>2n$. If $L$ is sufficiently
large, then
\[
  \dimH f(E_\theta)\le p.
\]
\end{lemma}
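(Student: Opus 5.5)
The plan is to cover $f(E_\theta)$ directly by images of $\theta$-good cubes and to estimate their $D$-diameters with Lemma~\ref{lem:romney-diameter}. Since $E_\theta=\limsup_k G_k$, for every $N$ we have
\[
 E_\theta\subset\bigcup_{k\ge N}G_k ,
\]
so $f(E_\theta)$ is covered by the sets $f(Q_w)$, where $Q_w$ ranges over the $\theta$-good cubes of levels $k\ge N$. This is a legitimate cover at every $N$, and its mesh tends to zero: for a good cube,
\[
 \diam_D f(Q_w)\le C_0 A^kL^{-\theta k}M^{-k},
\]
and this tends to $0$ once $L^\theta>A/M$. Since $A=M-2n+1<M$, even $L>1$ suffices for that.

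Next I bound the covering sum at a fixed level $k$. There are at most $M^{nk}$ good cubes in $\mathcal Q_k$, so for $s>0$ the contribution of level $k$ is at most
\[
 M^{nk}\,C_0^s\,\bigl(A L^{-\theta}M^{-1}\bigr)^{sk}
 \le C_0^s\bigl[M^{n}L^{-\theta s}\bigr]^k ,
\]
using $A\le M$. Taking $L$ so large that $M^nL^{-\theta p}<1$, i.e.\ $\log L>n\log M/(\theta p)$, makes this a geometric series with ratio less than $1$ for $s=p$ (and for any $s\ge p$).

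Summing over $k\ge N$ gives a tail $\sum_{k\ge N}C_0^p q^k$ with $q<1$, and this tends to $0$ as $N\to\infty$. Since the mesh also tends to $0$, we get $\mathcal H^p_D(f(E_\theta))=0$, hence $\dimH f(E_\theta)\le p$.

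One point to check is that $C_0=C_0(n,M,L)$ depends on $L$. This is harmless, because $C_0$ enters only as a constant prefactor, not in the ratio $q$, so the choice of $L$ is not circular. I expect no real obstacle. The only delicate matter is to use the count of all cubes, $M^{nk}$, rather than anything finer, and to confirm that the crude bound $A\le M$ loses nothing essential.
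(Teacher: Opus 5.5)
Your proposal is correct and follows the paper's proof essentially step for step. You cover $f(E_\theta)$ by images of $\theta$-good cubes at levels $k\ge N$, bound their diameters with Lemma~\ref{lem:romney-diameter}, count at most $M^{nk}$ cubes per level, and take $L$ large enough to get a convergent geometric series, noting as the paper does that $C_0$ enters only as a prefactor. The one small difference is harmless: you discard the factor $(A/M)^p$ using $A\le M$, whereas the paper keeps it in the ratio $\lambda=M^n(A/M)^pL^{-p\theta}$.
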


\begin{proof}
The set $G_k$ is the union of at most $M^{nk}$ cubes $Q_w$ satisfying
$c(w)\ge\theta k$. By Lemma~\ref{lem:romney-diameter}, each such cube satisfies
\[
  \diam_D f(Q_w)
  \le
  C_0 A^kL^{-\theta k}M^{-k}
  =
  C_0\left(\frac{A}{M}\right)^kL^{-\theta k},
\]
where $A=M-2n+1$.
For every $N\ge1$,
\[
  E_\theta
  \subset
  \bigcup_{k\ge N}G_k,
\]
so these images form a cover of $f(E_\theta)$. The sum of the
$p$-powers of their diameters is at most
\[
  \sum_{k\ge N}
  M^{nk}
  \left[
    C_0\left(\frac{A}{M}\right)^kL^{-\theta k}
  \right]^p.
\]
This equals
\[
  C_0^p
  \sum_{k\ge N}
  \left[
    M^n\left(\frac{A}{M}\right)^p L^{-p\theta}
  \right]^k.
\]
Choose $L$ so large that
\[
  \lambda
  :=
  M^n\left(\frac{A}{M}\right)^p L^{-p\theta}
  <1.
\]
Although $C_0$ may depend on $L$, it is independent of $k$ and appears only
as the fixed prefactor $C_0^p$ in the preceding series.
Since $\lambda<1$, the diameters in this cover tend uniformly to zero
as $k\to\infty$, and the displayed sum tends to zero as $N\to\infty$.
Hence
\[
  \mathcal{H}^p(f(E_\theta))=0,
\]
and therefore
\[
  \dimH f(E_\theta)\le p.
\]
\end{proof}

\begin{proof}[Proof of Theorem~\ref{thm:singular}]
The case $n=1$ follows from Tukia's theorem \cite{T89}, so assume
$n\ge2$. Romney's construction with parameters $M,L$ gives a metric
space
\[
  X=(Q,D)
\]
and a quasisymmetric homeomorphism
\[
  f:(Q,|\cdot|)\to X.
\]
For $p>0$, choose $\theta\in(0,1/2)$ and then $M$ so that
Lemma~\ref{lem:bad-set-dimension} gives
\[
  \dimH(Q\setminus E_\theta)\le n-1+p.
\]
Next choose $L$ so that Lemma~\ref{lem:good-image-dimension} gives
\[
  \dimH f(E_\theta)\le p.
\]
The set $E=E_\theta$ is Borel by \eqref{eq:Etheta}, and it has the required
properties.
\end{proof}

We finish by showing that, in higher dimensions, one cannot obtain the precise
analogue of Tukia's conclusion, with both $[0,1]^n\setminus E$ and
$f(E)$ of arbitrarily small Hausdorff dimension.

\begin{proposition}\label{prop:sharpness}
Let $n\ge2$, let $f:[0,1]^n\to X$ be a homeomorphism onto a metric
space $X$, and let $E\subset[0,1]^n$. If
\[
  \dimH f(E)<1,
\]
then
\[
  \dimH([0,1]^n\setminus E)\ge n-1.
\]
\end{proposition}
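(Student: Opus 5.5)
We argue by contradiction using a topological fact: a set of Hausdorff dimension less than $1$ is totally disconnected. Hence it cannot contain a nondegenerate continuum, so every nondegenerate connected subset of $f(E)$ is impossible. Since $f$ is a homeomorphism, it preserves connectedness. Thus $f(E)$ has $\mathcal H^1(f(E))=0$ whenever $\dimH f(E)<1$, so $f(E)$ contains no nontrivial path. Consequently $E$ contains no nondegenerate continuum in $[0,1]^n$: every segment in $[0,1]^n$ must meet $F:=[0,1]^n\setminus E$. (If $E$ contained a segment $I$, then $f(I)$ would be a nondegenerate continuum in $f(E)$. It would have positive $\mathcal H^1$, since for a continuum $\mathcal H^1\ge\diam>0$, by projecting onto the distance function from a point. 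That contradicts $\dimH f(E)<1$.)

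Now fix the direction $e_n$ and consider the family of vertical segments $\{y\}\times[0,1]$ for $y\in[0,1]^{n-1}$. Each of these segments meets $F$. So the orthogonal projection $\pi:\R^n\to\R^{n-1}$ onto the first $n-1$ coordinates satisfies $\pi(F)=[0,1]^{n-1}$.

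Since $\pi$ is $1$-Lipschitz, it does not increase Hausdorff dimension. We conclude
\[
 n-1=\dimH[0,1]^{n-1}=\dimH\pi(F)\le\dimH F,
\]
which is the claimed bound. Only the standard facts are needed: Lipschitz maps do not increase $\dimH$, and $\mathcal H^1(K)\ge\diam K$ for a connected set $K$ in a metric space.

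The only point requiring care is the continuum estimate. For a connected $K\subset X$ and $a\in K$, the map $x\mapsto d(a,x)$ is $1$-Lipschitz. Its image is a connected subset of $\R$ containing $0$ and values arbitrarily close to $\diam K/2$, hence an interval of positive length. Therefore $\mathcal H^1(K)\ge\mathcal H^1$ of that interval $>0$, and so $\dimH K\ge1$. This is elementary, so I expect no serious obstacle. The mild subtlety is that no measurability of $E$ is needed, since the argument is purely about covering segments.
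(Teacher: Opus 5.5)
Your proof is correct, and it takes a genuinely different route from the paper's. The paper argues by contradiction through topological dimension theory. It uses $\operatorname{ind} Z\le\dimH Z$ for separable metric spaces, together with the topological invariance and integer-valuedness of $\operatorname{ind}$. It then applies the union theorem $\operatorname{ind}(A\cup B)\le\operatorname{ind}A+\operatorname{ind}B+1$ to $X=f(E)\cup f([0,1]^n\setminus E)$, which gives $\operatorname{ind}X\le n-1$. This contradicts $\operatorname{ind}[0,1]^n=n$.

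You replace all of this with two elementary facts. First, a nondegenerate connected set has positive $\mathcal H^1$, by your $1$-Lipschitz distance-function argument, so $E$ contains no vertical segment. Second, the coordinate projection is $1$-Lipschitz and maps $[0,1]^n\setminus E$ onto $[0,1]^{n-1}$.

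Your route needs considerably less. It uses only that $f$ is continuous and nonconstant on each vertical segment, not that $f$ is a homeomorphism onto $X$. It uses only that $f(E)$ contains no nondegenerate continuum, a condition weaker than $\operatorname{ind} f(E)=0$. It also avoids the deep fact $\operatorname{ind}[0,1]^n=n$. In addition, it gives the quantitatively stronger conclusion $\mathcal H^{n-1}([0,1]^n\setminus E)\ge\mathcal H^{n-1}([0,1]^{n-1})>0$.

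What the paper's argument buys in return is independence from the Euclidean product structure. It applies verbatim to any separable metric domain of inductive dimension $n$, for example a compact $n$-manifold. Such a domain need not carry a family of arcs with a Lipschitz projection onto an $(n-1)$-dimensional base.
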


\begin{proof}
Still write $Q=[0,1]^n$. Suppose, toward a contradiction, that
\[
  \dimH(Q\setminus E)<n-1.
\]
Since \(f\) is a homeomorphism, $X$ is a separable metric space satisfying
\[
  X=f(E)\cup f(Q\setminus E).
\]
For a separable metric space $Z$, its \emph{small inductive dimension}
$\operatorname{ind}Z$ agrees with its \emph{covering dimension} and satisfies
$\operatorname{ind}Z\le\dimH Z$; see \cite[Theorem~6.3.10]{E08} and
\cite[Theorem~4.1.5]{E78}.  Hence
\[
  \operatorname{ind} f(E)<1
  \quad\text{and}\quad
  \operatorname{ind} f(Q\setminus E)
  =
  \operatorname{ind} (Q\setminus E)
  \le
  \dimH(Q\setminus E)
  <
  n-1.
\]
Here the equality uses the topological invariance of $\operatorname{ind}$
\cite[Theorem~3.4.2]{E08}.  Since $\operatorname{ind}$ is integer-valued on
separable metric spaces, and the contradiction assumption forces $E$ to be
nonempty, we obtain
\[
  \operatorname{ind} f(E)=0,
  \qquad
  \operatorname{ind} f(Q\setminus E)\le n-2.
\]
The union theorem \cite[Theorem~3.4.11]{E08} now gives
\[
  \operatorname{ind} X
  \le
  \operatorname{ind} f(E)+\operatorname{ind} f(Q\setminus E)+1
  \le
  0+(n-2)+1
  =
  n-1.
\]
But \(X\) is homeomorphic to \(Q=[0,1]^n\), so
\[
  \operatorname{ind} X=n,
\]
a contradiction.  Hence \(\dimH(Q\setminus E)\ge n-1\).
\end{proof}

\subsection*{Conflicts of interest}
The authors declare that they have no conflicts of interest.

\subsection*{Data availability statement}
No datasets were generated or analyzed in this study.

\end{document}